\theoremstyle{plain}
\newtheorem{theorem}[equation]{Theorem}
\newtheorem{lemma}[equation]{Lemma}
\newtheorem{proposition}[equation]{Proposition}
\theoremstyle{definition}
\theoremstyle{remark}
\newtheorem{remark}[equation]{Remark}
\newcommand{\I}{\operatorname{I}}
\numberwithin{equation}{section}
\newcommand{\bA}{\mathbf{A}}
\newcommand{\bF}{\mathbf{F}}
\newcommand{\bN}{\mathbb{N}}
\newcommand{\bR}{\mathbb{R}}
\newcommand\cG{\mathcal{G}}
\providecommand{\norm}[1]{\lVert#1\rVert}
\newcommand{\wei}[1]{\langle #1 \rangle}
\newcommand{\A}{\mathbf{A}}
\newcommand{\F}{\mathbf{F}}
\newcommand{\G}{\mathbf{G}}
\begin{document}

\title[Parabolic equations with singular degenerate coefficients]{Regularity  theory for parabolic equations with\\ singular degenerate coefficients}

\author[H. Dong]{Hongjie Dong}
\address[H. Dong]{Division of Applied Mathematics, Brown University,
182 George Street, Providence, RI 02912, United States of America}
\email{Hongjie\_Dong@brown.edu}

\author[T. Phan]{Tuoc Phan}
\address[T. Phan]{Department of Mathematics, University of Tennessee, 227 Ayres Hall,
1403 Circle Drive, Knoxville, TN 37996-1320 }
\email{phan@math.utk.edu}

\thanks{H. Dong was partially supported by the NSF under agreement  DMS-1600593. T. Phan is partially supported by the Simons Foundation, grant \# 354889.}

\subjclass[2010]{35K65, 35K67, 35D10, 35R11}
\keywords{Singular and degenerate parabolic equations, Muckenhoupt weights, Calder\'{o}n-Zygmund estimates, weighted Sobolev spaces}
\begin{abstract}
In this paper, we study parabolic equations in divergence form with coefficients that are singular degenerate as some Muckenhoupt weight functions in one spatial variable. Under certain conditions, weighted reverse H\"{o}lder's inequalities are established. Lipschitz estimates for weak solutions are proved for homogeneous equations with singular degenerate coefficients depending only on one spatial variable.  These estimates are then used to establish interior, boundary, and global weighted estimates of Calder\'{o}n-Zygmund type for weak solutions, assuming that the coefficients are partially \textup{VMO} (vanishing mean oscillations) with respect to the considered weights. The solvability in weighted Sobolev spaces is also achieved. Our results are new even for elliptic equations, and non-trivially extend known results for uniformly elliptic and parabolic equations. The results are also useful in the study of fractional elliptic and parabolic equations with measurable coefficients.
\end{abstract}

\maketitle

\section{Introduction and main results}

Consider a given weight function $\mu : \bR \rightarrow (0, \infty)$ that could be singular or degenerate. In this paper, we establish existence, uniqueness, and regularity theory in weighted Sobolev spaces for solutions of the following class of parabolic equations with coefficients that are measurable and singular-degenerate in $x_n$-variable
\begin{equation} \label{eqn-entire}
\mu(x_n) a_0(x_n)u_t - \textup{div}[\mu(x_n) (\mathbf{A}  \nabla u - \F)] +\lambda\mu(x_n)  u = \mu(x_n)f
\end{equation}
in  $(-\infty, T) \times \bR^n$, where $T\in (-\infty,+\infty]$.
We also study the class of equations \eqref{eqn-entire} in the half space $(-\infty,T)\times \bR^n_+$
with the conormal boundary condition:
\begin{equation} \label{main-eqn}
\lim_{x_n \rightarrow 0^+}\wei{\mu(x_n)(\A \nabla u - \F), \mathbf{e}_n }  = 0.
\end{equation}
Here, 
$\bR^n_+ =\bR^{n-1} \times \bR_{+}$ with some $n \in \mathbb{N}$, $\mathbf{e}_n = (0, 0, \ldots, 0, 1) \in \bR^n$ is the unit $n^{\textup{th}}$-coordinate vector, $\lambda\ge 0$, and $\A$ is a given measurable bounded and uniformly elliptic matrix, $a_0$ is a given measurable function, $\F$ is a given measurable vector field, and $f$ is a given measurable function. Throughout the paper, we assume that  there exists $\Lambda >0$ such that
\begin{equation} \label{ellipticity}
\begin{split}
& \Lambda^{-1} \leq a_0(x_n) \leq \Lambda  \quad \text{for a.e.} \, x_n, \quad \text{and} \\
& \Lambda^{-1} |\xi|^ 2 \leq \wei{\A(t, x) \xi, \xi}, \quad \|\A\|_{L^\infty} \leq \Lambda,
\end{split}
\end{equation}
for  $\xi \in \bR^{n}$ and for a.e. $(t, x)$.


The most interesting and important feature in the class of equations \eqref{eqn-entire} is that $\mu$ is assumed to be a Muckenhoupt weight function, whose definition will be reviewed shortly. As such, 
the coefficients in \eqref{eqn-entire} could be singular or degenerate in $x_n$-variable. 
When $\mu =1$, \eqref{eqn-entire} and \eqref{main-eqn} are respectively reduced to the linear nondegenerate parabolic equation on $\bR^n$, and on $\bR^n_+$ with the conormal boundary condition.

The singular and degenerate equations \eqref{eqn-entire} and \eqref{main-eqn} are of particular interest due to its geometric and probabilistic applications. We refer to the work \cite{Pop-1, Pop-2, EM} about equations with singular and degenerate coefficients that arise in finance and biology, where regularity theory 
in H\"{o}lder space 
was studied.  
When $\mu(y) = |y|^\alpha$ with $\alpha \in (-1,1)$, the equation
is related the extension problem of the fractional heat operator. See, for instance, \cite{BG17}. In this case, the coefficients can be singular or degenerate on the boundary of the domain. This type of partial differential operators are usually referred to Grushin operators (see \cite{Grushin, Gr-Sa}) and has attracted many attentions in the last few decades; see \cite{Caffa-Sil, Caffa-Stinga, Stinga}. Finally, due to the interest in problems in calculus of variation, the study of regularity of solutions to elliptic and parabolic equations with singular degenerate coefficients already appeared in many classical papers; see  \cite{Fabes-1, Fabes, Str, Stredulinsky, Smith-Stredulinsky, Chia-Sera, Chiarenza-1, Chiarenza-Serapioni}. 

The work mentioned above concerns H\"{o}lder  and Schauder regularity of weak solutions of equations with singular degenerate coefficients.
In this work, we will establish the regularity theory in Sobolev spaces for \eqref{eqn-entire} and \eqref{main-eqn}. In particular, results such as the weighted reverse H\"{o}lder's inequality and Lipschitz estimates are established. Weighted estimates of the Calder\'on-Zygmund type and solvability of solutions in weighted Sobolev spaces are proved, assuming the coefficients are partially \textup{VMO} (vanishing mean oscillations) with respect to the considered weights.

Let us now recall some definitions in order to state our main results. A non-negative locally integrable function $\mu : \bR \rightarrow \bR_+$ is said to be in the $A_2(\bR)$-Muckenhoupt class of weights if it satisfies the following condition:
\begin{equation}  \label{eq11.42}
[\mu]_{A_2(\bR)} :=  \sup_{y \in \bR,\, r>0 }  \left( \fint_{y-r}^{y+r} \mu(s)\,ds \right)  \left( \fint_{y-r}^{y+r} \frac{1}{\mu(s)} \, ds \right)  < \infty.
\end{equation}
For any $r>0$ and $z_0 = (t_0, x'_0, x_{n0})=(z_0',x_{n0}) \in  \bR \times \bR^{n}$,  we denote the partial weighted mean oscillation of the coefficient $\A$ in the cyllinder $Q_r(z_0)$ by
\[
\A^{\#}_{r}(z_0) = \fint_{Q_{r}(z_0)} |\A(t,x) - \bar{\A}_{Q'_{r}(z_0')}(x_n)|\, \mu(dz),
\]
where $\mu(dz) = \mu(x_n) dxdt$ and
$$
\bar{\A}_{Q'_{r}(z_0')}(x_n) = \fint_{Q'_{r}(z_0')}\A(t,x', x_n) \,dx'\, dt.
$$
Moreover, $Q'_r(z_0')$ is the parabolic cylinder in $\bR \times \bR^{n-1}$ centered at $z'_0 =(t_0,x_0') \in \bR\times \bR^{n-1}$ with radius $r$, and $Q_r(z_0)$ is the parabolic cylinder in $\bR \times \bR^{n}$ centered at $z_0 =(t_0, x_0', x_{n0}) \in \bR\times \bR^n$ with radius $r$.  We also denote $Q_r^+(z_0)=Q_r^+(z_0)\cap \{x_n>0\}$. See Section \ref{weighted-Sobolev} for more precise definitions. Also, throughout the paper, the following notation is used
\[
\fint_{Q} f(t, x)\, \omega (dz) = \frac{1}{\omega (Q)} \int_{Q} f(t, x) \, \omega (dz), \quad \text{where} \quad \omega(Q) = \int_{Q} \omega (dz)
\]
for any measurable function $f$ defined on a measurable set $Q \subset \bR^{n+1}$ and for any Borel measure $\omega$ on $\bR^{n+1}$.


Our first result is a local interior estimates in weighted Sobolev spaces for weak solutions of \eqref{eqn-entire}.

\begin{theorem} \label{interior-theorem} Let $q \geq 2$, $K_0 \geq 1$, and $\lambda\ge 0$ be constants. Then there exists $\delta=\delta(n,\Lambda, K_0,q)>0$ such that the following statement holds. Let $z_0=(z_0',x_{n0}) \in \bR^{n+1}$ and $R_0 \in (0,1/4)$. Assume that $\A: Q_2(z_0) \rightarrow \bR^{n\times n}$ such that $\A^{\#}_r (z) \leq \delta$ for any $r \in (0,R_0)$ and $z \in Q_{3/2}(z_0)$. Assume that $\mu$ satisfies the following $A_1$ type condition 
\[
\fint_{y -\rho}^{y+\rho}\mu(s)\, ds \leq K_0 \mu(y), \quad \textup{for a.e.} \ y \in (x_{n0}-2, x_{n0}+2), \quad \forall \ \rho \in (0,1).
\]
Suppose that $u$ is a weak solution of
\eqref{eqn-entire} in $Q_2(z_0)$,
where $F\in L^q(Q_2(z_0),\mu)$, $f=f_1+\sqrt \lambda f_2$, $f_1\in L^{ql_0'}(Q_2(z_0),\mu)$, $f_2\in L^{q}(Q_2(z_0),\mu)$,  $\lambda \geq 0$, and $l_0' = l_0'(n, M_0) \in (0,1)$ is defined below in \eqref{l-0-prime}. Then,
\[
\begin{split}
&\fint_{Q_1(z_0)} (|D u|+\sqrt \lambda|u|)^q \mu(dz)  \leq C \left[ \left( \fint_{Q_2(z_0)} |u|^2 \mu(dz) \right)^{q/2} \right. \\
&\quad \quad + \left . \fint_{Q_2(z_0)} (|\F|+|f_2|)^q \mu(dz) + \left( \fint_{Q_2(z_0)} | f_1|^{ql_0'} \mu(dz) \right)^{1/l_0'} \right],
\end{split}
\]
where $C>0$ is a constant depending on $n$, $\Lambda$, $K_0$, $q$, and $R_0$.
\end{theorem}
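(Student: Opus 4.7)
The plan is to prove this via the mean-oscillation / level-set strategy for parabolic Calder\'on--Zygmund estimates (as used by Krylov and Dong--Kim in the unweighted case), adapted to the weighted setting where $\mu$ acts on $x_n$ only. The main inputs are (i) the interior Lipschitz estimate for weak solutions of the homogeneous equation whose coefficients depend only on $x_n$, and (ii) the weighted reverse H\"older inequality, both of which are established earlier in the paper. The $A_1$-type hypothesis on $\mu$ ensures that $\mu(x_n)\,dz$ is a doubling measure on the parabolic cylinders $Q_r$ with doubling constant depending only on $K_0$, so that $(\bR^{n+1}, d_P, \mu\,dz)$ is a space of homogeneous type and the classical harmonic-analytic machinery applies.

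First I would fix a cylinder $Q_r(z) \subset Q_{3/2}(z_0)$ with $r \in (0, R_0)$ and decompose $u = v + w$ on $Q_r(z)$, where $v$ solves the frozen homogeneous equation
\[
\mu(x_n) a_0(x_n) v_t - \textup{div}\bigl[\mu(x_n)\bar{\mathbf{A}}_{Q'_r(z')}(x_n)\nabla v\bigr] + \lambda\mu(x_n) v = 0
\]
in $Q_r(z)$ with $v = u$ on the parabolic boundary, and $w = u - v$. Because $\bar{\mathbf{A}}_{Q'_r(z')}$ depends only on $x_n$, the interior Lipschitz estimate applies to $v$ and yields
\[
\|\nabla v\|_{L^\infty(Q_{r/2}(z))} + \sqrt\lambda\|v\|_{L^\infty(Q_{r/2}(z))} \leq C\left(\fint_{Q_r(z)}(|\nabla v|+\sqrt\lambda|v|)^2\,\mu(dz)\right)^{1/2}.
\]
The remainder $w$ satisfies an equation whose source contains the small-oscillation error $(\mathbf{A}-\bar{\mathbf{A}}_{Q'_r})\nabla u$ together with $\mathbf{F}$ and $f$. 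A weighted energy estimate for $w$, combined with the weighted reverse H\"older inequality applied to $u$, then gives
\[
\left(\fint_{Q_{r/2}(z)}|\nabla w|^2\,\mu(dz)\right)^{1/2} \leq C\delta^{\gamma}\left(\fint_{Q_{2r}(z)}|\nabla u|^{2+\epsilon}\,\mu(dz)\right)^{1/(2+\epsilon)} + \textup{(data)},
\]
for some $\gamma, \epsilon > 0$. The exponent $l_0'$ appearing in the theorem is exactly the self-improvement range produced by the weighted Gehring lemma underlying the reverse H\"older inequality, and it is in this step that the $f_1$ term enters at the exponent $ql_0'$ rather than $q$.

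Combining the two displays, $w = u - v$ produces a pointwise mean-oscillation bound
\[
\left(\fint_{Q_{r/2}(z)}\bigl|\nabla u - (\nabla u)_{Q_{r/2}(z)}\bigr|^2\,\mu(dz)\right)^{1/2} \leq C\delta^\gamma \,\mathcal{M}_\mu\bigl(|\nabla u|^{2+\epsilon}\bigr)(z)^{1/(2+\epsilon)} + C_\delta\,\mathcal{M}_\mu(\textup{data})(z),
\]
where $\mathcal{M}_\mu$ is the parabolic Hardy--Littlewood maximal operator with respect to $\mu\,dz$. A weighted Fefferman--Stein sharp-maximal-function inequality on $(\bR^{n+1}, d_P, \mu\,dz)$, together with the $L^{q/(2+\epsilon)}_\mu$-boundedness of $\mathcal{M}_\mu$ for $q > 2+\epsilon$, then upgrades this to the desired weighted $L^q$ estimate, provided $\delta$ is chosen small enough (depending on $q$, $K_0$, $n$, $\Lambda$) so that the $\delta^\gamma$ term on the right can be absorbed into the left. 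I expect the main technical obstacle to be Step 3: tracking the self-improvement exponent $l_0'$ and making sure the constants in the Lipschitz and reverse H\"older estimates depend only on the $A_1$-constant $K_0$ rather than on the full $A_2$-constant of $\mu$, since the weight here is only one-dimensional and the cylinders $Q_r$ are $(n+1)$-dimensional. In fact the reason the theorem imposes the $A_1$-type averaging hypothesis rather than just \eqref{eq11.42} is precisely to allow $\mu$ to be treated as essentially pointwise comparable to its average, which is what makes the cylinder-by-cylinder perturbation argument close.
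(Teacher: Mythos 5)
Your decomposition step (freezing the coefficients in $(t,x')$, comparing $u$ to the solution $v$ of the homogeneous frozen equation, feeding the oscillation error back through a weighted energy estimate plus reverse H\"older) matches the paper's Proposition \ref{inter-approxi-propos} and Lemmas \ref{inter-1}--\ref{inter-2} almost exactly, and you correctly identify the two key inputs (the Lipschitz estimate of Proposition \ref{inter-Lipschitz-est} for coefficients depending only on $x_n$, and Proposition \ref{reverse-holder-inter}) as well as the role of the $A_1$-type hypothesis in converting an integral Sobolev-type bound to a pointwise one. The divergence is in the final upgrade from $L^2$-oscillation to $L^q$: you propose the Krylov/Dong--Kim route via a pointwise mean-oscillation estimate, a weighted Fefferman--Stein sharp-maximal inequality on the space of homogeneous type $(\bR^{n+1},d_P,\mu\,dz)$, and $L^{q/(2+\epsilon)}_\mu$-boundedness of $\mathcal M_\mu$; the paper instead follows a maximal-function-free Acerbi--Mingione level-set argument (Lemmas \ref{stopping-lemma}--\ref{covering-lemma}, Proposition \ref{main-level-set}): a stopping-time/Vitali cover of the superlevel set $E_\rho(\tau)$ of $|Du|$, a good-$\lambda$ type decay estimate, and then direct integration of the distribution function together with a truncation $(Du)_k$ and an iteration lemma. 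Both routes are legitimate and rely on the same perturbation inputs; the paper's choice avoids the maximal function entirely (which, as they remark, is what lets the method extend to weighted Lorentz spaces with essentially no extra work), whereas your route is shorter to state once the FS machinery on doubling metric measure spaces is taken as known. Two points to tighten in your argument: (i) you need to address the \emph{a priori} qualitative finiteness of $\|Du\|_{L^q_\mu}$ before the absorption of the $\delta^\gamma$ term can be justified (the paper handles this by truncating $|Du|$ at level $k$ and passing to the limit), and the case $2\le q\le 2+\epsilon_0$ must be split off and handled by reverse H\"older alone since $\mathcal M_\mu$ is not bounded on $L^1_\mu$; and (ii) the paper keeps $\lambda=0$ in the freezing/decomposition and only reintroduces $\lambda>0$ at the very end by Agmon's trick (adding a dummy variable $y$), whereas you carry $\lambda$ through the frozen equation and its Lipschitz estimate --- this works because Proposition \ref{inter-Lipschitz-est} is proved for all $\lambda\ge 0$, but you should note that the $L^\infty$ bound on $\sqrt\lambda\,v$ comes from combining \eqref{L-infty-u} with the energy estimate rather than being stated verbatim there.
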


We also have the following boundary estimate in weighted Sobolev spaces for weak solutions of \eqref{eqn-entire}-\eqref{main-eqn}.  Note that the condition on $\mu$ is weaker near the boundary.
\begin{theorem}\label{bdr-reg-est}
Let $q \geq 2$, $K_0 \geq 1, M_0 \geq 1, R_0 \in (0, 1/4)$, and $\lambda\ge 0$ be constants. Then there exists $\delta=\delta(n,\Lambda,M_0,K_0,q)>0$ such that the following holds. Assume that $\A :Q_2^+ \rightarrow \bR^{n\times n}$ such that $\A^{\#}_r(z) \leq \delta$ for any $r \in (0, R_0)$ and $z \in Q_{3/2}^+$. Assume that $\mu$ satisfies \eqref{eq11.42} with $[\mu]_{A_2} \leq M_0$ and
\begin{equation}  \label{eq11.42c}
\fint_{y-\rho}^{y+\rho} \mu(s)\,ds\le K_0 \mu(y) \quad \text{for a.e.} \ y \in (0,2) \,\, \text{and any} \  \rho \in (0, y/2).
\end{equation}
Suppose that $u$ is a weak solution of \eqref{eqn-entire}-\eqref{main-eqn} in $Q_2^+$,
where $F\in L^q(Q_2^+,\mu)$, $f=f_1+\sqrt \lambda f_2$, $f_1\in L^{ql_0'}(Q^+_2,\mu)$, $f_2\in L^{q}(Q^+_2,\mu)$, and $l_0' = l_0'(n, M_0) \in (0,1)$ is defined below in \eqref{l-0-prime}.
Then,
\begin{equation} \label{L-q-bdr-est}
\begin{split}
&\fint_{Q_1^+} (|D u|+\sqrt \lambda |u|)^q \mu(dz)   \leq C \left[ \left( \fint_{Q_2^+} |u|^2 \mu(dz)\right)^{q/2} \right. \\
&\quad\quad + \left. \fint_{Q_2^+} (|\F|+|f_2|)^q \mu(dz) + \left( \fint_{Q_2^+} |f_1|^{ql_0'} \mu(dz) \right)^{1/l_0'} \right],
\end{split}
\end{equation}
where $C>0$ is a constant depending on $n$, $\Lambda$, $M_0$, $K_0$, $q$, and $R_0$.
\end{theorem}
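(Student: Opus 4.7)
The plan is to mirror the proof of the interior estimate (Theorem \ref{interior-theorem}) using the perturbation / mean-oscillation method of Caffarelli--Peral and Krylov, now adapted to the half-space with the conormal boundary condition \eqref{main-eqn} and the weaker hypothesis \eqref{eq11.42c} on $\mu$ near $\{x_n=0\}$. First I would dispose of the $\lambda u$ term via Krylov's extra-variable trick: setting $\tilde u(t,x,y) = u(t,x)\cos(\sqrt\lambda\,y)$ converts $\lambda u$ into $-\partial_y^2 \tilde u$ and reduces to the case $\lambda=0$ in one extra spatial variable, with $\sqrt\lambda|u|$ in the target estimate corresponding to $|\partial_y \tilde u|$ in the reduced problem. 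So one may assume $\lambda=0$.

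For $z_0 \in Q_1^+$ and $r \in (0,R_0)$, I would freeze coefficients and decompose $u = v + w$ on $Q_r^+(z_0)$, where $v$ solves the homogeneous equation
\[
\mu(x_n) a_0(x_n) v_t - \dv\bigl[\mu(x_n)\,\bar\A_{Q'_r(z_0')}(x_n)\,\nabla v\bigr] = 0
\]
with the same conormal condition on $\{x_n=0\}$ (when the cylinder reaches the boundary) and $v=u$ on the remaining parabolic boundary. Two ingredients then apply. (i) Since $\bar\A_{Q'_r(z_0')}$ depends only on $x_n$, the boundary Lipschitz estimate established earlier in the paper for $v$ yields a pointwise $L^\infty$-bound on $|\nabla v|$ over $Q_{r/2}^+(z_0)$ by its weighted $L^2$-average on $Q_r^+(z_0)$. (ii) Energy estimates for $w$, combined with the weighted reverse H\"older inequality from earlier in the paper, boost the natural $L^2(\mu)$-bound to $L^{2/l_0'}(\mu)$ and allow the perturbation term $(\A-\bar\A)\nabla u$ to be absorbed with the small factor $\delta$ supplied by $\A^{\#}_r(z_0) \le \delta$; the $f_1$ term goes in at the weaker exponent $ql_0'$ via the same reverse-H\"older boost, and $\F,f_2$ enter at exponent $q$.

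Combining (i) and (ii) yields a mean-oscillation decay for $\nabla u$ of the schematic form
\[
\bigl(|\nabla u|\bigr)^{\#}_{Q_\kappa^+(z_0)} \le C\kappa\,\mathcal{M}_\mu(|\nabla u|^2)(z_0)^{1/2} + C\kappa^{-\gamma}\Bigl[\delta\,\mathcal{M}_\mu(|\nabla u|^{2/l_0'})(z_0)^{l_0'/2} + \mathcal{M}_\mu\bigl(|\F|^q + |f_2|^q + |f_1|^{ql_0'}\bigr)(z_0)^{1/q}\Bigr],
\]
with a parabolic weighted Hardy--Littlewood maximal operator $\mathcal{M}_\mu$. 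Since $[\mu]_{A_2}\le M_0$ gives the self-improvement of $A_2$ and boundedness of $\mathcal{M}_\mu$ on the relevant weighted Lebesgue spaces, one chooses $\kappa$ small and then $\delta=\delta(n,\Lambda,M_0,K_0,q)$ small enough that the $\mathcal{M}_\mu(|\nabla u|^{2/l_0'})$ term can be absorbed into the left-hand side via a Fefferman--Stein / sharp-function argument in the Muckenhoupt setting. This produces the weighted $L^q$ bound \eqref{L-q-bdr-est}.

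The main obstacle is step (i): the boundary Lipschitz estimate for $v$ and the boundary reverse H\"older inequality at points with $x_{n0}$ very close to $0$, where $\mu$ may be unbounded or vanishing. The weaker condition \eqref{eq11.42c} — which controls $\mu$ only by its average on balls not crossing $\{x_n=0\}$ — is precisely what permits $\mu$ to behave like $|x_n|^\alpha$ at the boundary, so the technical work is to run the perturbation argument uniformly in $x_{n0}$, and in particular to handle the transitional regime $r\sim x_{n0}$ where $Q_r^+(z_0)$ just touches the boundary and one must interpolate between the purely interior estimate of Theorem \ref{interior-theorem} and the genuine boundary estimate at $x_{n0}=0$. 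One also needs that the conormal condition is preserved under the freezing $\A\mapsto \bar\A_{Q'_r(z_0')}$, which holds because the condition \eqref{main-eqn} involves only the $n$-th row of $\A$ restricted to $x_n=0$, and averaging in $(t,x')$ preserves this row.
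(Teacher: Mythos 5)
Your overall architecture — Agmon/Krylov extra-variable reduction of the $\lambda u$ term, freezing $\A$ in $(t,x')$, the $u=v+w$ splitting with a boundary Lipschitz bound for the frozen homogeneous solution and a smallness bound for the remainder via energy and reverse H\"older, and a case distinction near $x_n=0$ — matches the paper's strategy. But the final step you propose has a genuine gap.

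You close the argument with a Fefferman--Stein type sharp-function estimate
\[
\bigl(|\nabla u|\bigr)^{\#}_{Q_\kappa^+(z_0)} \lesssim \kappa\,\mathcal{M}_\mu(|\nabla u|^2)(z_0)^{1/2} + \kappa^{-\gamma}[\cdots],
\]
i.e.\ a mean-oscillation \emph{decay} for $\nabla u$ over small cylinders. To have the favorable factor $\kappa$ in front you need the oscillation of $D w$ over $Q_\kappa^+$ to be $O(\kappa)$, which requires a modulus of continuity (e.g.\ a $C^{1,\alpha}$ or Lipschitz bound on the second derivatives of $w$), not merely the $L^\infty$ bound $\|Dw\|_{L^\infty}\lesssim (\fint|Dw|^2\mu)^{1/2}$ that the Lipschitz lemma supplies. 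For coefficients that are only measurable in $x_n$ under a singular/degenerate $A_2$ weight $\mu$, the oscillation of $D_n w$ in $x_n$ appears genuinely hard to control: the quantity with an ODE in $x_n$ is $U=\sum_j\bar\A_{nj}D_jw$, for which one has $D_n(\mu U)=\mu\,(\text{bounded})$, and passing from oscillation of $\mu U$ to oscillation of $U$ involves the oscillation of $\mu$ itself, which is not controlled for a general $A_2$ weight. This is precisely why the paper does \emph{not} run a sharp-function argument. It instead uses the maximal-function-free level-set / stopping-time scheme of Acerbi--Mingione (Lemmas \ref{stopping-lemma}--\ref{covering-lemma} and Proposition \ref{main-level-set}): on each stopping-time cylinder one splits $u=\tilde u_i+w_i$, bounds $\mu(\{|Du|>N_0\tau\})$ by Chebyshev applied to $D\tilde u_i$ (small because of $\delta$), and kills the $Dw_i$ contribution purely from the $L^\infty$ bound by choosing $N_0$ larger than the Lipschitz constant. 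That argument never asks for an oscillation estimate for $Dw$, which is the essential point your proposal overlooks.

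A smaller issue: you propose $\tilde u = u\cos(\sqrt\lambda\,y)$. Then $\partial_y\tilde u=-\sqrt\lambda\,u\sin(\sqrt\lambda\,y)$, and for $\lambda$ small $\int_{-1}^1|\sin(\sqrt\lambda\,y)|^q\,dy\to 0$, so you cannot uniformly recover $\sqrt\lambda\,\|u\|$ from $\|\partial_y\tilde u\|$. The paper's choice $\sin(\sqrt\lambda\,y+\pi/4)$ is designed so that both $|\sin(\sqrt\lambda\,y+\pi/4)|$ and $|\cos(\sqrt\lambda\,y+\pi/4)|$ integrate to quantities bounded below uniformly in $\lambda\ge0$; this is easily fixed but worth noting.
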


From Theorems \ref{interior-theorem} and \ref{bdr-reg-est}, we derive the following solvability results in weighted Sobolev spaces for \eqref{eqn-entire} and \eqref{main-eqn}.
\begin{theorem}  \label{thm1.entire} Let $K_0 \geq 1$, $q \geq 2$ and $R_0 \in (0,1)$ be constants. Then there exist $\delta=\delta(n,\Lambda, K_0,q)>0$ and $\lambda_0=\lambda_0(n,\Lambda, K_0,q,R_0)>0$ such that the following holds. Assume that $\A^{\#}_r(z) \leq\delta$ for any $r \in (0,R_0)$ and $z \in (-\infty,T) \times \bR^n$ with some $T>0$. Assume also that $\mu$ satisfies the following $A_1$-Muckenhoupt type condition
\[
\fint_{y-\rho}^{y+\rho} \mu(s) ds \leq K_0 \mu(y), \quad \textup{for a.e.} \ y \in \bR, \quad \forall \ \rho\ \in (0,1).
\]
Suppose that $\F\in L^q((-\infty,T)\times\bR^n,\mu)$, $f=f_1+\sqrt \lambda f_2$, $f_1\in L^{ql_0'}((-\infty,T)\times\bR^n,\mu)$, $f_2\in L^{q}((-\infty,T)\times\bR^n, \mu)$, and $l_0' = l_0'(n, M_0) \in (0,1)$ is defined below in \eqref{l-0-prime}. Then, for $\lambda\ge\lambda_0$,  there exists a unique weak solution $u$ of \eqref{eqn-entire} in $(-\infty,T)\times \bR^d$, 
which satisfies
\begin{align*}  
&\int_{(-\infty,T)\times\bR^n} (|D u|+\sqrt \lambda |u|)^q \,\mu(dz)  \\
&\leq C\left[ \int_{(-\infty,T)\times\bR^n} (|\F|+|f_2|)^q \,\mu(dz) + \left( \int_{(-\infty,T)\times\bR^n} |f_1|^{ql_0'}\,\mu(dz) \right)^{1/l_0'} \right],
\end{align*}
where $C>0$ depends only on $n$, $\Lambda$,  $K_0$, $q$, and $R_0$.  When $q\in (1,2)$, the result still holds provided that $f_1\equiv 0$.
\end{theorem}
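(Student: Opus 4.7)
The plan is to derive Theorem \ref{thm1.entire} from the local interior estimate of Theorem \ref{interior-theorem} via a covering argument combined with a large-$\lambda$ absorption, together with a method of continuity and, for $q<2$, a duality argument. For $q\ge 2$ assume first that $u$ is a weak solution with $\int(|Du|+\sqrt\lambda|u|)^q\mu\,dz<\infty$. I would cover $(-\infty,T)\times\bR^n$ by a locally finite family of parabolic cylinders $\{Q_1(z_i)\}$ whose doubles $\{Q_2(z_i)\}$ have bounded overlap, and apply Theorem \ref{interior-theorem} on each $Q_2(z_i)$; note that the global $A_1$ assumption and the uniform VMO smallness of $\A$ make the hypotheses valid at every base point. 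Raising each local estimate to the $q$-th power, multiplying by $\mu(Q_1(z_i))$, and summing, Jensen's inequality yields $(\fint_{Q_2(z_i)}|u|^2\mu)^{q/2}\le \fint_{Q_2(z_i)}|u|^q\mu$ because $q\ge 2$, so that the sum of the leading terms on the right collapses to $C\int|u|^q\mu\,dz = C\lambda^{-q/2}\int(\sqrt\lambda|u|)^q\mu\,dz$. Choosing $\lambda_0$ so that $C\lambda_0^{-q/2}\le 1/2$ absorbs this into the left-hand side for $\lambda\ge\lambda_0$, producing the advertised global a priori estimate; applied to the difference of two solutions with zero data it also gives uniqueness.

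For existence, I would use the method of continuity along $L_\tau:=\tau L_1+(1-\tau)L_0$, where $L_1$ is the operator in \eqref{eqn-entire} and $L_0$ is the model operator $\mu a_0\partial_t-\dv(\mu\nabla\cdot)+\lambda\mu$. Since the convex combination preserves the ellipticity bounds \eqref{ellipticity} and the VMO smallness of the coefficient matrix, the a priori estimate of the previous paragraph applies with $\tau$-independent constants. Solvability for $L_0$ in the weighted Sobolev space at level $q=2$ follows from a Galerkin scheme and a weighted energy estimate (the $A_1$ condition implies $\mu\in A_2$, giving the needed weighted Poincar\'e and Hardy inequalities); higher integrability $q>2$ for $L_0$ then comes from the a priori estimate itself. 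The method of continuity then transfers solvability from $L_0$ to $L_1$ for every $\lambda\ge\lambda_0$ and $q\ge 2$. To make the absorption legitimate, I would first run the argument on a dense subclass of smooth compactly supported data, for which the finiteness required above is automatic, and then extend by approximation using the a priori estimate and uniqueness.

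The final clause, covering $q\in(1,2)$ with $f_1\equiv 0$, is obtained by duality. The formal adjoint of $L_1$ (with time reversed) has the same structure, ellipticity, weight, and partial VMO data, so the $q\ge 2$ theory just established applies to it. For $\F,f_2\in L^q(\mu)$ with $q<2$, pairing the equation for $u$ against the adjoint solution with test data in $L^{q'}(\mu)$, $q'=q/(q-1)>2$, through the weighted duality $L^q(\mu)\leftrightarrow L^{q'}(\mu)$ produces both existence and the quantitative $L^q$ bound for $u$ and $Du$ driven by the $\F$ and $f_2$ data. The $f_1$ term, which is carried in the space $L^{ql_0'}(\mu)$ with $l_0'<1$, has no clean dual description at the $L^{q'}$ level, which is precisely why the statement requires $f_1\equiv 0$ in this range.

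The principal obstacle is the circularity implicit in the absorption step: before $C\int|u|^q\mu$ can be moved to the left it must already be finite, which is not a consequence of the local estimate alone. This is handled by the approximation scheme above, but requires a careful check that mollification of the data preserves the partial VMO smallness of $\A$ and the $A_1$ bound on $\mu$, and that the approximating solutions converge in the correct weighted Sobolev norm. These steps are standard but not automatic in the weighted setting, and together with the verification that the continuity path $L_\tau$ inherits all smallness constants is where most of the bookkeeping will go.
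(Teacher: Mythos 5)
Your a priori estimate (covering by unit parabolic cylinders, applying Theorem \ref{interior-theorem}, using $q\ge 2$ to pass from $\big(\fint|u|^2\mu\big)^{q/2}$ to $\fint|u|^q\mu$, summing with bounded overlap, and absorbing the $|u|^q$ term by choosing $\lambda_0$ large) and your duality argument for $q\in(1,2)$ are exactly what the paper does. You differ on existence for $q\ge 2$: you propose a method-of-continuity path from the model operator $L_0$, whereas the paper truncates the data so that $\F^{(k)},f_1^{(k)},f_2^{(k)}\in L^2\cap L^q$, invokes the Galerkin-built $L^2$ theory to produce $u^{(k)}$, shows $u^{(k)}\in W^{1,q}$, and passes to the limit via the a priori estimate. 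Both routes are viable in principle, but the continuity method does not sidestep the key difficulty: one still has to verify that solutions of the model operator $L_0$ (or the approximating solutions) belong to $W^{1,q}$ globally before the absorption can be run, and that is precisely the same obstacle you face for $L_1$.

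This is where your proposal has a genuine gap. You assert that for ``smooth compactly supported data, the finiteness required above is automatic''; it is not. Compact support of the data does not make $\int|Du|^q\mu\,dz<\infty$, because the solution is not compactly supported, and in the weighted setting neither the decay of $u$ at infinity nor the growth of $\mu$ over large shells is controlled without further argument. The paper closes this by proving a quantitative spatial decay estimate: using annular cut-offs $\eta_j$ supported outside the dyadic shells $\hat Q_{2^jk}$ and iterating the $L^2$-energy estimate, one shows that $\|Du^{(k)}\|_{L^2}+\sqrt\lambda\|u^{(k)}\|_{L^2}$ on the shell $\hat Q_{2^{j+1}k}\setminus\hat Q_{2^jk}$ decays like $C^j 2^{-j(j-1)/2}$, i.e.\ super-exponentially, which dominates the at-most-exponential growth of $\mu$ on those shells coming from the doubling property \eqref{eq7.36}; combined with the local estimate of Theorem \ref{interior-theorem} on each shell, this is what actually yields global $W^{1,q}$ membership of the approximating solution. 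Finally, your worry that mollification must preserve the partial VMO smallness of $\A$ and the $A_1$ bound on $\mu$ is moot: only the data $\F,f_1,f_2$ are approximated (by sharp spatial truncation, not mollification), and $\A$ and $\mu$ are not touched.
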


\begin{theorem}  \label{thm1.11} Let $K_0 \geq 1$, $q \geq 2, R_0 \in (0,1)$, and $M_0 \geq 1$ be constants.  Then there exist $\delta=\delta(n, \Lambda, M_0, K_0, q)>0$ and $\lambda_0=\lambda_0(n, \Lambda, M_0, K_0, q, R_0)>0$ such that the following holds.  Assume that $\A : (-\infty,T) \times \bR^{n}_+ \rightarrow \bR^{n\times n}$ such that $\A_r^{\#}(z) \leq \delta$ for any $z \in (-\infty,T) \times \bR^{n}_+$ and $r \in (0, R_0)$.  Assume that $\mu$ satisfies \eqref{eq11.42} with $[\mu]_{A_2} \leq M_0$ and \eqref{eq11.42c}. Suppose $\F\in L^q((-\infty,T)\times\bR^n_+,\mu)$, $f=f_1+\sqrt \lambda f_2$, $f_1\in L^{ql_0'}((-\infty,T)\times\bR^n_+,\mu)$, $f_2\in L^{q}((-\infty,T)\times\bR^n_+,\mu)$, and $l_0' = l_0'(n, M_0) \in (0,1)$ is defined below in \eqref{l-0-prime}.
Then,  for  $\lambda\ge\lambda_0$, there exists a unique weak solution $u$ of \eqref{eqn-entire}-\eqref{main-eqn} in $(-\infty,T)\times \bR^n_+$, 
which satisfies
\begin{equation}  \label{eq5.25}
\begin{split}
&\int_{(-\infty,T)\times\bR^n_+} (|D u|+\sqrt \lambda |u|)^q \,\mu(dz) \\
&\leq C\left[ \int_{(-\infty,T)\times\bR^n_+} (|\F|+|f_2|)^q \,\mu(dz) + \left( \int_{(-\infty,T)\times\bR^n_+} |f_1|^{ql_0'} \,\mu(dz) \right)^{1/l_0'} \right],
\end{split}
\end{equation}
where $C>0$ depends only on $n$, $\Lambda$, $M_0$, $K_0$, $q$, and $R_0$. When $q\in (1,2)$, the result still holds provided that $f_1\equiv 0$.
\end{theorem}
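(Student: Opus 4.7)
The plan is to combine the local estimates of Theorems \ref{interior-theorem} and \ref{bdr-reg-est} via a weighted Fefferman-Stein sharp function argument to produce a global $L^q(\mu)$ a priori bound, then to absorb the $L^2$-tail for large $\lambda$ by a scaling step, and finally to obtain existence by the method of continuity in the coefficients.

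I first establish the global a priori estimate. By translation and parabolic rescaling, Theorem \ref{bdr-reg-est} yields on every boundary cylinder $Q_\rho^+(z_0)$ with $z_0\in\{x_n=0\}$ and $\rho\le R_0/2$ a reverse-H\"older type inequality
\[
\Bigl(\fint_{Q_\rho^+(z_0)}(|Du|+\sqrt\lambda|u|)^q\mu(dz)\Bigr)^{1/q}\le C\Bigl(\fint_{Q_{2\rho}^+(z_0)}|u|^2\mu(dz)\Bigr)^{1/2}+C\,\Phi(z_0,2\rho),
\]
where $\Phi$ collects the weighted averages of the right-hand side data, and Theorem \ref{interior-theorem} gives the analogous bound on cylinders with $x_{n0}\ge 2\rho$ (where the $A_1$-type control on $\mu$ is available). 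Because $\mu(dz)$ is an $A_2$ measure on the parabolic space of homogeneous type, the weighted Hardy-Littlewood maximal operator $M_\mu$ is bounded on $L^{q/2}(\mu)$ and the Fefferman-Stein sharp function theorem applies; combining these with the local estimates in a good-$\lambda$/level-set decomposition --- boundary cylinders handled by Theorem \ref{bdr-reg-est} and interior cylinders by Theorem \ref{interior-theorem} --- yields
\[
\|Du\|_{L^q(\mu)}^q+\lambda^{q/2}\|u\|_{L^q(\mu)}^q\le C\bigl(\|\F\|_{L^q(\mu)}^q+\|f_2\|_{L^q(\mu)}^q+\|f_1\|_{L^{ql_0'}(\mu)}^{q}\bigr)+C\,\cE(u),
\]
where $\cE(u)$ is the remainder inherited from the $L^2$-average term on the right of \eqref{L-q-bdr-est}.

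Next I absorb $\cE(u)$ by taking $\lambda$ large. Zooming to the scale $\rho\sim\lambda^{-1/2}$ makes the zeroth-order term $\lambda\mu u$ comparable to $\mu a_0 u_t$, and on such a cylinder H\"older's inequality together with the doubling property of $\mu$ gives $(\fint|u|^2\mu)^{1/2}\le C\lambda^{-1/2}(\fint(\sqrt\lambda|u|)^q\mu)^{1/q}$. Summing over a disjoint cover of $(-\infty,T)\times\bR^n_+$ produces $\cE(u)\le C\lambda^{-q/2}\cdot\lambda^{q/2}\|u\|_{L^q(\mu)}^q$, which for $\lambda\ge\lambda_0$ can be absorbed on the left-hand side to yield \eqref{eq5.25}. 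Uniqueness is then immediate by applying this estimate to the difference of two solutions with $\F=f=0$.

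For existence I would run the method of continuity along $\A_s=s\A+(1-s)I_n$, $s\in[0,1]$: the constants $\Lambda, M_0, K_0$ and the partial-VMO bound $\A^{\#}_r\le\delta$ are preserved uniformly in $s$, so the a priori estimate propagates along the path. Solvability at $s=0$ is obtained by a Galerkin/energy argument in $L^2(\mu)$ using the $A_2$-weighted Poincar\'e inequality, after which the a priori estimate promotes the solution to $L^q(\mu)$. The case $q\in(1,2)$ with $f_1\equiv 0$ follows by duality, since the formal adjoint has the same structural form with respect to $\mu(dz)$ (with $\A^T$ in place of $\A$, and the conormal boundary condition preserved), and the already-established estimate for the dual exponent $q'>2$ transfers to the primal problem. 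I expect the principal obstacle to be the weighted sharp-function step: the two hypotheses on $\mu$ are asymmetric (global $A_2$ versus interior $A_1$-type \eqref{eq11.42c}), so the level-set decomposition must carefully segregate boundary cylinders --- where only Theorem \ref{bdr-reg-est} applies, with the weaker reverse-H\"older exponent $ql_0'$ --- from interior cylinders, where Theorem \ref{interior-theorem} with the stronger $A_1$ hypothesis is available. Tracking the self-improving exponent $l_0'$ and verifying that the weighted Fefferman-Stein/maximal machinery remains valid on the singular/degenerate parabolic space of homogeneous type are the delicate technical points distinguishing this argument from the unweighted case.
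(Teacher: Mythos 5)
Your overall architecture --- local estimates from Theorems \ref{interior-theorem} and \ref{bdr-reg-est}, a globalization step, absorption of the lower-order $u$-term by taking $\lambda$ large, duality for $q\in(1,2)$ --- is in the right spirit, but both your globalization and your existence steps diverge from the paper in ways that matter. For the global a priori estimate, a Fefferman--Stein sharp function argument is not needed and is in fact awkward here: Theorems \ref{interior-theorem} and \ref{bdr-reg-est} already produce $L^q$-averages of $|Du|+\sqrt\lambda|u|$ over cylinders (the maximal-function-free level-set machinery has already been spent inside their proofs), so the paper simply applies H\"older's inequality once to turn the $L^2$-average of $u$ into an $L^q$-average, then patches the resulting local estimates via translation, scaling, and a partition of unity. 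Your claim that $\bigl(\fint|u|^2\mu\bigr)^{1/2}\le C\lambda^{-1/2}\bigl(\fint(\sqrt\lambda|u|)^q\mu\bigr)^{1/q}$ from ``zooming to scale $\rho\sim\lambda^{-1/2}$'' is just H\"older in disguise (the $\lambda$-factors cancel); there is no gain from the rescaling, and none is needed: the absorption works because the left-hand side carries $\lambda^{q/2}\|u\|^q_{L^q(\mu)}$ while the right carries only $\|u\|^q_{L^q(\mu)}$.

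The more serious gap is in existence. Your method of continuity along $\A_s=s\A+(1-s)I_n$ presupposes solvability at $s=0$ \emph{in the same space $W^{1,q}(\mu)$ with $L^q(\mu)$ data}, but the constant-coefficient endpoint $\mathbf{A}=I_n$ with the weight $\mu(x_n)$ still present is not elementary --- it is the same problem you are trying to solve, merely with $\delta=0$, so the argument is essentially circular for $q\ne 2$. The paper instead uses an approximation/truncation scheme: cut off $\F, f_1, f_2$ so they lie in $L^2\cap L^q$, solve the truncated problems in $W^{1,2}(\mu)$ via Galerkin in expanding half-balls, apply the a priori estimate to show the approximate solutions form a Cauchy sequence in $W^{1,q}(\mu)$, and pass to the limit. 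Finally, for $q\in(1,2)$ with $f_1\equiv 0$, your duality argument does correctly yield the a priori estimate, but you have not addressed existence at all. This is the most delicate point: the $W^{1,2}$-solution with truncated (compactly supported) data need not lie in $W^{1,q}(\mu)$ when $q<2$ and the domain is unbounded, since $W^{1,2}\not\subset W^{1,q}$. The paper closes this gap by an iterated weighted energy estimate with a sequence of cutoffs $\eta_j$ vanishing on $\hat Q_{2^jk}$, establishing super-exponential decay of $\|Du^{(k)}\|_{L^2(\hat Q_{2^{j+1}k}\setminus\hat Q_{2^jk},\mu)}$ in $j$, and then H\"older together with the doubling of $\mu$ converts this decay into $W^{1,q}(\mu)$-membership. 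Without this step (or a substitute) the proof of existence for $q\in(1,2)$ is incomplete.
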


\begin{remark} From the above two theorems, we also obtain the solvability for the corresponding initial value problem in $(0,T)\times \bR^n$ (or $(0,T)\times \bR^n_+$, respectively), where $T\in (0,\infty]$. Indeed, we can solve the equation with $F$ and $f$ being zero for $t\le 0$. By using the unique solvability, it is easily seen that $u=0$ for $t\le 0$. When $T$ is finite, we can take $\lambda_0=0$ if we allow $C$ to also depend on $T$. For example, in the half space case, for $\lambda\in [0,\lambda_0]$, we consider $v=v(t,x)=e^{-\lambda_0 t}u(t,x)$, which satisfies
\begin{equation*} 
\left\{
\begin{array}{cccl}
\mu(x_n) a_0(x_n) v_t - \textup{div}[\mu(x_n) (\mathbf{A}\nabla v - e^{-\lambda_0 t}\F)] +(\lambda+\lambda_0)\mu(x_n) v & = & \mu(x_n)e^{-\lambda_0 t}f, \\
\displaystyle{\lim_{x_n \rightarrow 0^+}\wei{\mu(x_n)(\A\nabla v - e^{-\lambda_0 t}\F), \mathbf{e}_n }} & = &0
\end{array} \right.
\end{equation*}
in $(0,T) \times \bR^n_+$. 
By applying the estimate with $\lambda+\lambda_0$ in place of $\lambda$, and returning to $u$, we get \eqref{eq5.25} the desired estimate with a constant $C$ also depending on $T$.
\end{remark}
Besides  Theorems \ref{interior-theorem}, \ref{bdr-reg-est}, \ref{thm1.entire}, and  \ref{thm1.11},  we emphasize that many other fundamental results for equations with singular degenerate coefficients are also established in this paper such as reverse H\"{o}lder's inequalities (see Section \ref{reverse-H-section}) and Lipschitz regularity estimates (see Section \ref{Lipschits-Section}). These results are non-trivial and completely new even in the elliptic setting. They are also of independent interest and could be useful for other purposes. %

\begin{remark} The following points regarding our main theorems are worth highlighting. 
\begin{itemize}
\item[\textup{(i)}] Condition \eqref{eq11.42c} holds for $\mu(y) = y^\alpha, y >0$ with $\alpha \in (-1,1)$. As a consequence, Theorem \ref{bdr-reg-est} and Theorem \ref{thm1.11} hold true for such $\mu$.
\item[\textup{(ii)}] By localizing the estimates in Theorems \ref{thm1.entire} and \ref{thm1.11}, we can get local interior and boundary estimates similar to these in Theorem \ref{interior-theorem} and \ref{bdr-reg-est} with $q\in (1,2)$.
\item[\textup{(iii)}] If $\mu =1$, Theorems \ref{interior-theorem}, \ref{bdr-reg-est}, \ref{thm1.entire}, and  \ref{thm1.11} hold with $l_0' = \frac{n+2}{n+4}$.
\end{itemize}
\end{remark}
We now provide several comments regarding our main theorems. First, even if $\mu =1$, it is well known from the work \cite{Me} that some type of smallness assumption on the oscillation of the coefficients is necessary. Observe that in the condition $\A^{\#}_r(z) \leq \delta$ in our theorems, the oscillation of the coefficients $\A$ is only measured in the $(t,x')$-variable with respect to the weight $\mu$. This type of condition is usually referred to the partially \textup{VMO} condition first introduced by Kim and Krylov \cite{Kim-Krylov} for the case $\mu =1$. For example, when $\A(t,x) = a(x_n) \tilde{\A}(t, x')$, it can be easily checked that if $\tilde{\A}$ is uniformly elliptic and in \textup{VMO}, then no regularity is required for $a$, except for a uniformly elliptic condition $0 < \gamma \leq a(s) \leq \gamma^{-1}$ with some $\gamma >0$. In this perspective, our results extend the results established in \cite{Kim-Krylov, Kim-Krylov-1, Dong-Kim-1, Dong-Kim}, where the case $\mu =1$ is studied. In this line of research, we also would like to mention the recent work \cite{CMP, MP} in which regularity of weak solutions for elliptic equations were studied. In particular, in \cite{CMP} the coefficients are assumed to be singular, degenerate, and VMO in all the variables. In \cite{MP}, elliptic equations with coefficients that are singular or degenerate in one-variable direction as in this current paper is studied. However, compared to the current work, some quite restrictive additional conditions on $\mu$ and structural conditions on the coefficients $\A$ are required in these two papers. For instance, when the principal coefficient $\A(x) = |x|^\alpha$ with $x \in \bR^n$, results in \cite{CMP} are only valid with $|\alpha|$ sufficiently small.

Our proof is based on the perturbation technique using homogeneous equations with coefficients that are frozen in the $(t,x')$-variables, 
and apply the reverse H\"{o}lder's inequalities and Lipschitz estimates mentioned above. We exploit a level set argument using maximal function free approach introduced in \cite{A-Mingione} and extend it to the weighted setting. In this regard, we note that it is possible to extend our regularity estimate theorems to the estimates in weighted Lorentz spaces. However, we do not pursue this direction to avoid further technical complexities. Let us also mention that the proof of the Lipschitz estimates in Section \ref{Lipschits-Section} is highly delicate because the coefficients are singular and degenerate in $x_n$-variable, and they are only just measurable with no other extra regularity assumptions. To achieve the desired results, we elaborately explore and use the analysis of anisotropic weighted Sobolev inequalities, energy estimates, iteration arguments, and the structure of the considered equations.


The remaining part of the paper is organized as follows. In Section \ref{weighted-Sobolev}, we present several results on weighted Sobolev inequalities. The weighted reverse H\"{o}lder's inequalities for equations with singular degenerate coefficients will be stated and proved in Section \ref{reverse-H-section}. In Section \ref{Lipschits-Section}, we prove Lipschitz estimates for homogeneous equations with singular degenerate coefficients that only depend on one spatial variable. Section \ref{interior-Section} is devoted to the study of interior Sobolev estimates. The proof of Theorem \ref{interior-theorem} is also given in this section. Similarly, Section \ref{boundary-section} contains the proof of Theorem \ref{bdr-reg-est} on boundary estimates in weighted Sobolev spaces. In the last section, Section \ref{last-section}, we prove Theorems \ref{thm1.entire} and \ref{thm1.11}.

\section{Weighted Sobolev inequalities} \label{weighted-Sobolev}

We begin the section with introducing notation used in the paper. For any $\rho >0$ and $x = (x', x_n) \in \bR^{n-1} \times \bR$, we denote the cylinder in $\bR^n$ of radius $\rho$ centered at $x =(x', x_n) \in \bR^{n}$ by
\[
D_\rho(x) = B'_\rho(x') \times (x_n -\rho, x_n + \rho),
\]
where $B'_\rho(x')$ is the ball in $\bR^{n-1}$ with radius $\rho$ and centered at $x' \in \bR^{n-1}$. For any $z = (t, x)=(z',x_n) \in \bR \times \bR^n$, the parabolic cylinder in $\bR \times \bR^n$ of radius $\rho$ centered at $z$ is denoted by $Q_\rho(z) = (t - \rho^2, t] \times D_\rho(x)$.  We also write $\Gamma_\rho(t) = (t - \rho^2, t]$, $Q'_\rho(z') = \Gamma_\rho(t) \times B'_\rho(x')$, and
\[
B'_\rho = B'_\rho (0),\quad D_\rho = D_\rho(0),\quad Q_\rho = Q_\rho(0), \quad Q'_\rho = Q'_\rho(0),\quad \text{and} \quad \Gamma_\rho = \Gamma_\rho(0).
\]
The upper half parabolic cylinders are written as
\[
D^+_\rho(x) = D_\rho(x)\cap \{x_n>0\}, \quad Q_{\rho}^+(z) = \Gamma_\rho(t) \times D^+_\rho(x),
\]
and $D_\rho^+ = D_\rho^+ (0)$ and $Q_\rho^+ =Q_\rho^+(0)$.

Let us also recall that for each $p \in [1, \infty)$, a locally integrable function $\mu : \bR^n \rightarrow \bR^+$ is said to be in $A_p(\bR^n)$ Muckenhoupt class of weights if and only if $[\mu]_{A_p(\bR^n)} < \infty$, where
\begin{equation*}
[\mu]_{A_p(\bR^n)} = \left\{
\begin{array}{lll}
& \displaystyle{\sup_{\rho >0,x \in \bR^n} \left[\fint_{B_\rho(x)} \mu(y)\, dy \right]\left[\fint_{B_\rho(x)} \mu(y)^{\frac{1}{1-p}}\, dy \right]^{p-1}}, & \quad \text{if} \quad p \in (1, \infty) \\
& \displaystyle{\sup_{\rho >0,x \in \bR^n} \left[\fint_{B_\rho(x)} \mu(y)\, dy \right] \norm{\frac{1}{\mu}}_{L^\infty(B_\rho(x))}}, & \quad \text{if} \quad p =1.
\end{array} \right.
\end{equation*}
Observe that if $\mu \in A_p(\bR)$, then $\tilde{\mu} \in A_p(\bR^n)$ with $[\mu]_{A_p(\bR)} = [\tilde{\mu}]_{A_p(\bR^n)}$, where $\tilde{\mu}(x) = \mu(x_n)$ for $x = (x', x_n) \in \bR^n$. Sometimes, if the context is clear, we neglect the spacial domain and only write $\mu \in A_p$.

For any Borel measure $\omega$ on $\bR^n$ and any measurable set $\Omega \subset \bR^n$, the following notation is used
\[
\fint_{\Omega} f(x)\, \omega (dx) = \frac{1}{\omega(\Omega)} \int_{\Omega} f(x) \,\omega (dx)
\]
for any $\omega$-measurable function $f$ defined on $\Omega$.
For a non-empty 
domain $\Omega \subset \bR^n$,  and for a given weight $\mu$ and with some $p \in [1,\infty)$, we denote $L^p(\Omega, \mu)$ to be the weighted Lebesgue space of all measurable functions $f$ defined on $\Omega$ such that
\[
\norm{f}_{L^p(\Omega, \mu)} = \left( \int_\Omega |f(x)|^p\, \mu(dx) \right)^{1/p} < \infty \quad \text{with} \quad \mu(dx) = \mu(x) dx.
\]
Also, $W^{1,p}(\Omega, \mu)$ is the weighted Sobolev spaces consisting of all measurable functions $f$ defined on $\Omega$ such that $f$ and its weak derivatives $D_k f$ are all in $L^p(\Omega, \mu)$ for $k =1,2,\ldots, n$. For any interval $\Gamma\in \bR$, we denote
$$
V^{1,2}(\Gamma\times \Omega,\mu)=
L^2(\Gamma, W^{1,2}(\Omega, \mu)) \cap L^\infty(\Gamma, L^2(\Omega, \mu)).
$$

Throughout the paper, let $\phi_0 \in C_0^\infty(D_2)$ be a fixed standard cut-off function satisfying $\phi_0 \equiv 1$ on $D_1$, $0\le \phi_0 \le 1$, and $|D \phi_0 | \leq 2$. For any $\bar{x} \in \bR^n$ and $\rho >0$, we define $\eta_{\bar{x},\rho}(x) = \phi_0( 2(x-\bar{x})/\rho)$. For each function $u$ defined in the neighborhood of the cylinder $D_{\rho}(\bar{x})$,  following the idea in \cite{Giaquinta-Struwe}, we define the weighted mean of $u$ in $D_\rho(\bar{x})$ with respect to a given weight $\mu : \bR^n \rightarrow (0, \infty)$ as
\begin{equation} \label{weighted-average}
\bar{u}_{\bar{x},\rho} = \fint_{D_\rho(\bar{x})} u(x)   \eta_{\bar{x},\rho}^2(x) \,\mu(dx), \quad \text{where} \quad \mu(dx)=\mu(x)dx.
\end{equation}

The following lemma is a consequence of \cite[Theorem 1.5]{Fabes}.
\begin{lemma}[Weighted Sobolev-Poincar\'e's inequality] \label{Sobolev-Poincare} Let $1 < p < \infty$ and $\mu \in A_p(\bR^n)$ with $[\mu]_{A_p(\bR^n)} \leq M_0$ for some fixed $M_0\geq 1$. Then, there exists $\gamma_0 = \gamma_0 (n, M_0) >0$ such that for any $1 \leq k \leq \frac{n}{n-1} + \gamma_0$
\begin{equation}
                        \label{eq2.21}
\left(\fint_{D_R}| u(x) - A_R|^{kp} \,\mu(dx) \right)^{1/(kp)}  \leq C(n, M_0, k) R \left(\fint_{D_R}  |D u|^p \,\mu(dx) \right)^{1/p}
\end{equation}
for any $u \in W^{1,p}(D_R, \mu)$ and $R \in (0, \infty)$, where
\[
A_R = \bar{u}_{0, R}, \quad \text{or} \quad A_R = \fint_{D_R} u(x) \,dx, \quad \text{or} \quad A_R = \fint_{D_R} u(x) \,\mu(dx).
\]
\end{lemma}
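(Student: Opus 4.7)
The plan is to reduce the inequality on the cylinder $D_R$ to the weighted Sobolev--Poincar\'e inequality on Euclidean balls given by \cite[Theorem 1.5]{Fabes}, and then to transfer among the three prescribed choices of $A_R$ by a standard triangle-inequality argument.

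First, I would note that $D_R = B'_R \times (-R, R)$ is a convex Lipschitz domain satisfying $B_R \subset D_R \subset B_{R\sqrt{2}}$. Since $\mu \in A_p(\bR^n)$ with $[\mu]_{A_p}\le M_0$, the weight $\mu$ is doubling with doubling constant depending only on $n$ and $M_0$; consequently $\mu(B_R)$, $\mu(D_R)$, and $\mu(B_{R\sqrt 2})$ are pairwise comparable with constants of the same type, and the analogous comparability persists at every intermediate scale. Applying \cite[Theorem 1.5]{Fabes} then produces $\gamma_0 = \gamma_0(n, M_0) > 0$ and the desired inequality on $B_R$ with $A_R = \fint_{B_R} u\,\mu(dx)$. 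Transferring this estimate to $D_R$ is routine: the Fabes--Kenig--Serapioni argument uses only Hardy-type chaining together with the weighted isoperimetric/Sobolev inequality, both of which carry over verbatim from balls to convex domains of bounded eccentricity such as $D_R$. Scale invariance of both sides of \eqref{eq2.21} reduces to the case $R=1$ throughout.

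Next, I would interchange the three choices of $A_R$ via the triangle inequality: given any two choices $A_R, A_R'$,
\[
\left(\fint_{D_R}|u-A_R|^{kp}\,\mu(dx)\right)^{1/(kp)} \le \left(\fint_{D_R}|u-A_R'|^{kp}\,\mu(dx)\right)^{1/(kp)} + |A_R - A_R'|,
\]
and each difference $|A_R - A_R'|$ is controlled by $C(n, M_0)\bigl(\fint_{D_R}|u - A_R'|^{kp}\,\mu(dx)\bigr)^{1/(kp)}$. For the comparison between the weighted and unweighted means, this uses the $A_p$ bound $\fint_{D_R}|v|\,dx \le C(M_0)\bigl(\fint_{D_R}|v|^p\,\mu(dx)\bigr)^{1/p}$ applied to $v = u - A_R'$; and for the cut-off weighted mean $\bar u_{0, R}$ it uses the lower bound $\int_{D_R}\eta_{0,R}^2\,\mu(dx) \ge \mu(D_{R/2}) \ge c(n, M_0)\mu(D_R)$ together with Jensen's inequality.

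The main technical point, entirely packaged into the cited theorem, is the self-improvement of the Sobolev exponent from $n/(n-1)$ to $n/(n-1) + \gamma_0$. This rests on the open-ended reverse H\"older inequality for $A_p$ weights---namely that $\mu \in A_p$ actually lies in $A_{p-\varepsilon}$ for some $\varepsilon = \varepsilon(n, M_0) > 0$---which via the standard relation between $A_p$ classes and weighted Sobolev embeddings produces the gain $\gamma_0$ in the Sobolev exponent. For the present purposes this is treated as a black box, and the only work one has to supply is the passage from balls to cylinders and the interchange of the three averages.
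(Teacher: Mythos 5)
Your proof is correct and takes essentially the same route as the paper: both cite \cite[Theorem 1.5]{Fabes} as a black box for the weighted and unweighted plain means (with the gain $\gamma_0$ coming from the self-improving property of $A_p$ weights), and then reduce the case $A_R=\bar u_{0,R}$ to the weighted mean using the doubling lower bound $\int_{D_R}\eta_{0,R}^2\,\mu(dx)\gtrsim\mu(D_R)$. The only implementation difference is that the paper expands $u(x)-\bar u_{0,R}$ as a weighted average of differences $u(x)-u(y)$ and bounds a double integral, whereas you compare $\bar u_{0,R}$ directly to $\fint_{D_R}u\,\mu(dx)$ via the triangle inequality and Jensen; both are valid and yours is slightly shorter.
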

\begin{proof}
For the last two cases when $A_R = \fint_{D_R} u(x) \,dx$ or $A_R = \fint_{D_R} u(x) \,\mu(dx)$, \eqref{eq2.21} was obtained in \cite[Theorem 1.5]{Fabes}. We only treat the case when $A_R =  \bar{u}_{0, R}$. Using H\"older's inequality,
\begin{align*}
|u(x)-A_R|&\le\left(\int_{D_{R}}\eta_{0,R}^2(x)\,\mu(dx) \right)^{-1}
\int_{D_{R}}|u(x)-u(y)|\eta_{0,R}^2(y)\,\mu(dy)\\
&\le\left(\int_{D_{R}}\eta_{0,R}^2(x)\,\mu(dx) \right)^{-1/(kp)}
\left(\int_{D_{R}}|u(x)-u(y)|^{kp}\eta_{0,R}^2(y)\,\mu(dy)\right)^{1/(kp)}.
\end{align*}
By the doubling property of $\mu$, we have
$$
\int_{D_R}\mu(dx)\le C\int_{D_{R/2}}\mu(dx)\le C\int_{D_{R}}\eta_{0,R}^2(x)\,\mu(dx).
$$
Thus the left-hand side of \eqref{eq2.21} is bounded by
\begin{align*}
&\left(\int_{D_{R}}\mu(dx) \right)^{-\frac 1{ kp}}\left(\int_{D_{R}}\eta_{0,R}^2(x)\,\mu(dx) \right)^{-\frac 1{ kp}}
\left(\int_{D_R}\int_{D_R}| u(x) -u(y)|^{kp} \eta_{0,R}^2(y)\,\mu(dx)\,\mu(dy)\right)^{-\frac 1{ kp}}\\
&\le \left(\int_{D_{R}}\,\mu(dx) \right)^{-2/(kp)}
\left(\int_{D_R}\int_{D_R}| u(x) -u(y)|^{kp} \,\mu(dx)\,\mu(dy)\right)^{1/(kp)}\\
&\le \left(\int_{D_{R}}\,\mu(dx) \right)^{-1/(kp)}
\left(\int_{D_R}| u(x) -c|^{kp} \,\mu(dx)\right)^{1/(kp)}
\end{align*}
for any $c\in \bR$. By taking $c=\fint_{D_R} u(x) \,\mu(dx)$, we complete the proof.
\end{proof}

The following weighted parabolic embedding result will be used in our paper.
\begin{lemma}[Weighted parabolic embedding] \label{weighted-Sobolev-imbed} Assume that $\mu : \bR^n \rightarrow (0, \infty)$ such that $\mu \in A_{2}$ with $[\mu]_{A_2} \leq M_0$. Then, there exists a constant $C = C(n,M_0)$ and $l_0 = l_0(n, M_0) >1$ such that
\[
\begin{split}
 \left(\fint_{Q_\rho} |u|^{2l_0} \,{\mu}(dz) \right)^{1/l_0} \leq C \left[ \sup_{t\in \Gamma_\rho} \fint_{D_\rho} |u|^2 \,\mu(dx) +
\rho^2\fint_{Q_\rho} |D u|^2\,{\mu}(dz) \right]
\end{split}
\]
for any $u \in V^{1,2}(Q_\rho,\mu)$, where ${\mu}(dz) = \mu(x) dz$ and $\rho >0$. Moreover, if $\mu =1$, then $l_0 = \frac{n+2}{n}$.
\end{lemma}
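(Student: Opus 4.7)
\medskip

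\noindent\textbf{Proof proposal.} The strategy is the classical parabolic embedding $L^\infty_t L^2_x\cap L^2_t H^1_x\hookrightarrow L^{2l_0}_{t,x}$, with the spatial step supplied by Lemma \ref{Sobolev-Poincare} and with an interpolation choice that makes the time integration close.

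First, for each fixed $t\in\Gamma_\rho$, I would apply Lemma \ref{Sobolev-Poincare} with $p=2$ and $k$ chosen in the admissible range $(1,n/(n-1)+\gamma_0]$ (so $k=k(n,M_0)>1$), taking $A_\rho$ to be the weighted mean $\fint_{D_\rho}u(t,\cdot)\,\mu(dx)$. Splitting $u=(u-A_\rho)+A_\rho$, applying the triangle inequality in $L^{2k}(D_\rho,\mu)$, and using $|A_\rho|\le(\fint_{D_\rho}|u(t,\cdot)|^2\mu(dx))^{1/2}$ produces, after conversion to averages, the pointwise-in-$t$ weighted Sobolev inequality
\[
\left(\fint_{D_\rho}|u(t,\cdot)|^{2k}\,\mu(dx)\right)^{1/k}
\le C\left[\rho^2\fint_{D_\rho}|Du(t,\cdot)|^2\,\mu(dx)+\fint_{D_\rho}|u(t,\cdot)|^2\,\mu(dx)\right].
\]

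Next, for $q\in(2,2k)$ I would apply H\"older interpolation
\[
\|u(t,\cdot)\|_{L^q(D_\rho,\mu)}\le\|u(t,\cdot)\|_{L^2(D_\rho,\mu)}^{\theta}\|u(t,\cdot)\|_{L^{2k}(D_\rho,\mu)}^{1-\theta},
\qquad \tfrac{1}{q}=\tfrac{\theta}{2}+\tfrac{1-\theta}{2k}.
\]
Because the exponent relation forces $q\theta/2+q(1-\theta)/(2k)=1$, all powers of the weighted volume $\mu(D_\rho)$ cancel when the inequality is rewritten in terms of averages. The crucial choice is $q(1-\theta)=2$, equivalently $\theta=1-2/q$, which determines $q=2(2k-1)/k$. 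Set
\[
l_0:=\frac{q}{2}=\frac{2k-1}{k}=2-\frac{1}{k}>1,\qquad \Phi(t):=\fint_{D_\rho}|u(t,\cdot)|^2\mu(dx).
\]
This step yields
\[
\fint_{D_\rho}|u(t,\cdot)|^{2l_0}\,\mu(dx)\le \Phi(t)^{l_0-1}\left(\fint_{D_\rho}|u(t,\cdot)|^{2k}\,\mu(dx)\right)^{1/k}.
\]

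Finally, I would combine the two previous displays and integrate in $t$. Pulling $\sup_{t}\Phi(t)^{l_0-1}$ out, using
\[
\frac{1}{\rho^2}\int_{\Gamma_\rho}\fint_{D_\rho}|Du(t,\cdot)|^2\mu(dx)\,dt=\fint_{Q_\rho}|Du|^2\,\mu(dz),\qquad \frac{1}{\rho^2}\int_{\Gamma_\rho}\Phi(t)\,dt\le\sup_{t\in\Gamma_\rho}\Phi(t),
\]
gives
\[
\fint_{Q_\rho}|u|^{2l_0}\,\mu(dz)\le C\sup_{t\in\Gamma_\rho}\Phi(t)^{l_0-1}\left[\rho^2\fint_{Q_\rho}|Du|^2\,\mu(dz)+\sup_{t\in\Gamma_\rho}\Phi(t)\right].
\]
Bounding the first factor by $(A+B)^{l_0-1}$ with $A=\sup_t\Phi(t)$ and $B=\rho^2\fint_{Q_\rho}|Du|^2\mu(dz)$, the right-hand side becomes $C(A+B)^{l_0}$, and extracting the $l_0$-th root is the claimed inequality. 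For the unweighted case $\mu\equiv 1$, I would replace the input in Step 1 by the classical Sobolev inequality with the sharp exponent $k=n/(n-2)$; then $l_0=2-(n-2)/n=(n+2)/n$.

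No serious obstacle is expected. The one place requiring care is the bookkeeping of the weighted volume $\mu(D_\rho)$ across the three steps; the identity $q\theta/2+q(1-\theta)/(2k)=1$ is precisely what makes those factors cancel so that the final inequality is written cleanly in terms of averages and of $\rho^2$.
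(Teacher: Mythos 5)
Your proposal is correct and relies on the same essential ingredients as the paper's proof: Lemma \ref{Sobolev-Poincare} applied with $p=2$, a H\"older interpolation in $x$ that fixes $l_0 = 2 - 1/k$ (the paper writes this as $l_0 = 2(1 - 1/\kappa)$ with $\kappa = 2k$), and integration in time against $\sup_{t\in\Gamma_\rho}\fint_{D_\rho}|u|^2\,\mu(dx)$. The only difference is a minor reorganization: you apply Sobolev--Poincar\'e and the triangle inequality to $u$ up front and then interpolate $u$ directly, whereas the paper interpolates $\hat u = u - \tilde u_{D_\rho}(t)$ first, bounds $\|\hat u\|_{L^\kappa(D_\rho,\mu)}$ by Sobolev--Poincar\'e, integrates in time using Young's inequality, and only adds $\tilde u_{D_\rho}(t)$ back by the triangle inequality at the end; this changes nothing of substance.
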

\begin{proof} Note that if $\mu =1$, then the inequality is the standard unweighted parabolic embedding theorem with $l_0 = \frac{n+2}{n} >1$. For the general case, 
 let us denote
\[
\hat{u} = u - \tilde{u}_{D_\rho}(t), \quad \text{where} \quad \tilde{u}_{D_\rho}(t) =  \fint_{D_\rho} u(t, x) \,\mu(dx).
\]
Let $\kappa = 2(\frac{n}{n-1} + \gamma_0) >2$, where $\gamma_0$ is defined in Lemma \ref{Sobolev-Poincare}. Also, let $l_0 = 2(1- \frac{1}{\kappa}) >1$. 
Using H\"{o}lder's inequality and Lemma \ref{Sobolev-Poincare}, we obtain
\[
\begin{split}
&  \fint_{D_\rho} |\hat{u}|^{2l_0} \,\mu (dx)
  \leq \left( \fint_{D_\rho} |\hat{u}|^{2} \,\mu(dx) \right)^{1-\frac 2\kappa}  \left( \fint_{D_\rho} |\hat{u}|^{\kappa} \,\mu(dx) \right)^{\frac{2}{\kappa}} \\
 & \leq C  \left(\sup_{t\in \Gamma_\rho} \fint_{D_\rho} |\hat{u}(t, x)|^2 \,\mu(dx)\right)^{1-\frac{2}{\kappa}} \left( \fint_{D_\rho} |D \hat{u}|^{2} \,\mu(dx) \right) \rho^{2}.
\end{split}
\]
Then, by integrating in time, and also using Young's inequality, we obtain
\[
\left( \fint_{Q_\rho} |\hat{u}|^{2l_0} \,{\mu} (dz)\right)^{\frac{1}{2l_0}} \leq
C\left[\left( \sup_{t\in \Gamma_\rho} \fint_{D_\rho} |\hat{u}(t, x)|^2 \,\mu(dx) \right)^{1/2}+ \rho\left(  \fint_{Q_\rho} |D \hat{u}|^{2} \,{\mu}(dz)\right)^{1/2}  \right].
\]
This estimate together with the triangle inequality imply that
\[
\begin{split}
& \left(\fint_{Q_\rho} |u|^{2l_0} \,{\mu}(dz)  \right)^{\frac{1}{2l_0}} \leq \left(\fint_{Q_\rho} |\hat{u}|^{2l_0} \,{\mu}(dz)  \right)^{\frac{1}{2l_0}} + \left(\fint_{Q_\rho} |\tilde{u}_{D_\rho}(t)|^{2l_0} \,{\mu}(dz)  \right)^{\frac{1}{2l_0}} \\
& \leq C \left[ \left(\sup_{t\in \Gamma_\rho}  \fint_{D_\rho} |\hat{u}(t, x)|^2 \,\mu(dx) \right)^{1/2} + \rho \left( \fint_{Q_\rho} |D u|^2\,{\mu}(dz) \right)^{1/2} \right] + \sup_{t\in \Gamma_\rho}  |\tilde{u}_{D_\rho}(t)|.
\end{split}
\]
On the other hand, from the H\"{o}lder's inequality, it follows that
\[
 \sup_{t\in \Gamma_\rho} |\tilde{u}_{D_\rho}(t)| \leq  \left(\sup_{t\in \Gamma_\rho}  \fint_{D_\rho} |u(t, x)|^2 \,\mu(dx)\right)^{1/2}.
\]
Then, the estimate in the lemma follows from the last two estimates.
\end{proof}

\section{Weighted reverse H\"{o}lder's inequalities} \label{reverse-H-section}

In this section, we prove self-improving integrability property for the gradients of weak solutions of parabolic equations with singular-degenerate coefficients as in \eqref{eqn-entire}-\eqref{main-eqn}. These types of estimates are usually referred to Meyers-Gehring's estimates, which are needed later in our paper. 
We establish the results for general $A_2$ weight on $\bR^n$ or $\bR^n_+$. In this section, the leading coefficients $\A$ are {\em not} required to have small oscillation. Throughout this paper, we denote $l_0'  = \frac{l_0}{ 2l_0 -1} \in (0,1)$, where $l_0>1$ is defined in Lemma \ref{weighted-Sobolev-imbed}, i.e.,
\begin{equation} \label{l-0-prime}
1/l_0 + 1/l_0' = 2.
\end{equation}
\subsection{Interior weighted reverse H\"{o}lder's inequality}
Throughout this subsection, we assume that $\A : Q_2 \rightarrow \bR^{n\times n}$ is measurable and uniformly elliptic.
Assume also that $\mu : \bR^n \rightarrow (0, \infty)$ is a weight in $A_2(\bR^n)$, and denote $\mu(dx)=\mu(x)dx$ and $ \mu(dz)=\mu(x)dz$. Assume that $\F \in L^2(Q_2, \mu))$ and $f \in  L^{2l_0'}(Q_2, \mu)$.
We consider the equation
\begin{equation} \label{Q-2-d.eqn}
\mu(x)  a_0(x_n) u_t - \textup{div}[\mu(x)( \mathbf{A}  \nabla u - \F)]
= \mu(x)f \quad\text{in}\,\,   Q_2.
\end{equation}
By weak solution of \eqref{Q-2-d.eqn} we mean a function $u \in V^{1,2}(Q_2,\mu)$ satisfying
\begin{equation} \label{weak-formula}
-\int_{Q_2} \mu(x) a_0(x_n) u \varphi_t \,dz + \int_{Q_2} \mu(x)\wei{ (\mathbf{A}  \nabla u - \F), \nabla \varphi}
\,dz= \int_{Q_2} \mu(x)f \varphi\,dz
\end{equation}
for any $ \varphi \in C_0^\infty(Q_2)$.
It turns out that weak solutions of \eqref{Q-2-d.eqn} posses some modest regularity in time variable. To work with test functions involving the solutions, it is convenient to recall the Steklov averages: for each function $g \in Q_r (\bar{z})$, where $\bar{z} = (\bar{t}, \bar{x}) \in \bR^{n+1}$, we denote
\[
[g]_h(t,x) = \left\{
\begin{array}{cl}
\frac{1}{h} \int_{t}^{t +h} g(s,x)\, ds & \quad \text{for} \quad t \in ( \bar{t} -r^2, \bar{t} - h), \\
0 & \quad \text{for} \quad t > \bar{t} -h.
\end{array} \right.
\]
Then, as in \cite[p. 17-18]{DiB}, we can prove that \eqref{weak-formula} is equivalent to
\begin{align} \label{Steklov-weak-formula}
&\int_{ D_2}  \mu(x) a_0(x_n) \partial_t  [u]_h  \phi(x) \,dx + \int_{ D_2} \mu(x) \wei{[\A \nabla u]_h , \nabla \phi}
\,dx  \nonumber\\
&=  \int_{ D_2} \mu(x)\Big[ \wei{  [\F]_h, \nabla \phi} + [f]_{h} \phi (x) \Big] \,dx,
 \end{align}
for any $\phi \in W^{1,2}_0(D_2, \mu)$ and for a.e. $t \in (-4, 4 -h)$. 


\begin{lemma}[Weighted Caccioppoli's type estimate]
                \label{inter-energy}  Let $a_0\equiv 1$. There exists a constant $C = C(n, \Lambda, M_0)$ such that for any $\bar{z} = (\bar{t}, \bar{x}) \in Q_{3/2},\ \rho \in (0,1/4)$, and for any weak solution $u$ of \eqref{Q-2-d.eqn} with $[\mu]_{A_2} \leq M_0$, it holds that
\[
\begin{split}
&  \sup_{\tau \in \Gamma_{\rho}(\bar{t})} \rho^{-2}\fint_{D_{\rho}(\bar{x})} \hat{u} ^2(\tau ,x)   \,\mu(dx) +   \fint_{Q_{\rho}(\bar{z})}  |D \hat{u}|^2  \,{\mu}(dz) \\
& \leq C\left[ \rho^{-2} \fint_{Q_{2\rho}(\bar{z})} \hat{u}^2 \,{\mu}( dz)   + \fint_{Q_{2\rho}(\bar{z})}|\F|^2 \,{\mu}(dz) + \rho^2 \left( \fint_{Q_{2\rho}(\bar{z})} |f |^{2l_0'}  \,{\mu}(dz) \right)^{1/l_0'}\right],
\end{split}
\]
where $\hat{u} = u(t,x) - \bar{u}_{\bar{x}, 2\rho}(t)$ with $\bar{u}_{\bar{x}, 2\rho}(t)$ being defined as in \eqref{weighted-average}.
\end{lemma}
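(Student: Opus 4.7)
My plan is to prove this as a fairly standard weighted Caccioppoli estimate, but with two weighted twists: the choice of subtracted constant $\bar u_{\bar x, 2\rho}(t)$ is tuned to the cutoff $\eta$ so that a certain boundary-in-time term vanishes, and the low-integrability source $f$ is absorbed using the weighted parabolic embedding Lemma \ref{weighted-Sobolev-imbed}.

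First I would fix the cutoff $\eta = \eta_{\bar x, 2\rho}$ as in \eqref{weighted-average}, so $\eta \in C_0^\infty(D_{2\rho}(\bar x))$ with $\eta \equiv 1$ on $D_\rho(\bar x)$, $0 \le \eta \le 1$, and $|D\eta| \le C/\rho$. By the very definition of $\bar u_{\bar x, 2\rho}(t)$, we have
\[
\int_{D_{2\rho}(\bar x)} \hat u(t,x)\, \eta^2(x)\, \mu(dx) = 0 \quad\text{for a.e.}\,\, t.
\]
I would then test the Steklov-averaged weak formulation \eqref{Steklov-weak-formula} with $\phi = [\hat u]_h \eta^2$, pass to the limit $h \to 0^+$ in the standard way, and use the orthogonality above to obtain
\[
\int_{D_{2\rho}(\bar x)} \mu u_t \hat u \eta^2\,dx = \tfrac{1}{2}\tfrac{d}{dt}\int_{D_{2\rho}(\bar x)} \mu \hat u^2 \eta^2\,dx,
\]
since $a_0 \equiv 1$ and the time derivative of $\bar u_{\bar x, 2\rho}(t)$ multiplied by $\int \hat u \eta^2 \mu\,dx$ vanishes. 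This is the key gain from the weighted mean; without it one would pick up an uncontrolled term from $\bar u'(t)$.

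Next, using $\nabla u = \nabla \hat u$ (since $\bar u$ is spatially constant), ellipticity in the form $\langle \A \nabla \hat u, \nabla \hat u\rangle \ge \Lambda^{-1}|D \hat u|^2$, and Young's inequality on the cross terms $\int \mu \A \nabla \hat u \cdot 2\eta \nabla\eta\, \hat u$ and $\int \mu \F \cdot \nabla(\hat u\eta^2)$, I would arrive at a pointwise-in-$t$ differential inequality
\[
\tfrac{d}{dt}\int \mu \hat u^2 \eta^2\,dx + \tfrac{1}{C}\int \mu |D\hat u|^2 \eta^2\,dx \le \tfrac{C}{\rho^2}\int \mu \hat u^2\,dx + C\int \mu |\F|^2\,dx + 2\int \mu f \hat u \eta^2\,dx.
\]
The main obstacle is the last term, because $f$ is only in $L^{2l_0'}(\mu)$ with $l_0' < 1$ (so in general $f$ is \emph{not} in $L^2(\mu)$). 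To handle it, I would apply H\"older with the conjugate pair $(2l_0', 2l_0)$, noting $\tfrac{1}{2l_0'} + \tfrac{1}{2l_0} = 1$ by \eqref{l-0-prime}:
\[
\int_{Q_{2\rho}(\bar z)} \mu |f| |\hat u| \eta^2\,dz \le \Bigl(\int_{Q_{2\rho}(\bar z)} \mu |f|^{2l_0'}\,dz\Bigr)^{\!1/(2l_0')}\! \Bigl(\int_{Q_{2\rho}(\bar z)} \mu |\hat u \eta|^{2l_0}\,dz\Bigr)^{\!1/(2l_0)}\!,
\]
and bound the second factor by Lemma \ref{weighted-Sobolev-imbed} applied to $v = \hat u \eta$ on $Q_{2\rho}(\bar z)$ (using $|D(\hat u \eta)| \le |D\hat u| + C|\hat u|/\rho$) in terms of $\sup_\tau \fint_{D_{2\rho}} |\hat u|^2 \mu(dx)$, $\rho^2 \fint_{Q_{2\rho}} |D\hat u|^2 \mu(dz)$, and $\rho^{-2}\fint_{Q_{2\rho}} |\hat u|^2 \mu(dz)$.

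Finally, I would integrate the differential inequality in $t$ from $\bar t - (2\rho)^2$ up to an arbitrary $\tau \in \Gamma_\rho(\bar t)$ (this is rigorous after Steklov averaging), pass to the sup over $\tau$ on the left, and apply Young's inequality $AB \le \varepsilon A^2 + C_\varepsilon B^2$ to the product from Step 3 with $\varepsilon$ chosen small enough to absorb the $\sup_\tau \fint |\hat u|^2 \mu$ and $\fint|D\hat u|^2 \mu$ contributions into the left-hand side. The leftover pieces are exactly $\rho^{-2}\fint_{Q_{2\rho}} \hat u^2 \mu$, $\fint_{Q_{2\rho}}|\F|^2 \mu$, and $\rho^2\bigl(\fint_{Q_{2\rho}}|f|^{2l_0'}\mu\bigr)^{1/l_0'}$, which yield the claimed estimate after dividing by $\mu(D_{2\rho}(\bar x))$ and using the doubling property of $\mu$ to replace normalization by $D_{2\rho}$ with normalization by $D_\rho$ on the left.
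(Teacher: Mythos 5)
Your overall strategy matches the paper's — the same weighted mean producing the orthogonality $\int_{D_{2\rho}(\bar x)}\hat u\,\eta^2\,\mu(dx)=0$ that kills the $\partial_t\bar u_{\bar x,2\rho}$ contribution, the same H\"older/Young treatment of the $f$-term with the conjugate pair $(2l_0',2l_0)$ and Lemma \ref{weighted-Sobolev-imbed}, and the same absorption of the small multiples of $\sup_\tau\fint\hat u^2\mu$ and $\fint|D\hat u|^2\mu$ into the left. However, there is a genuine gap in the final integration-in-time step: you integrate the pointwise-in-$t$ differential inequality from $\bar t-4\rho^2$ to $\tau$ using only a spatial cutoff $\eta$. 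This produces the bottom boundary term
\[
\int_{D_{2\rho}(\bar x)}\hat u^2(\bar t-4\rho^2,x)\,\eta^2(x)\,\mu(dx),
\]
which appears on the right-hand side with a positive sign after rearranging. It is evaluated at the single time slice $t=\bar t-4\rho^2$, and there is no way to control a fixed slice by the space-time average $\rho^{-2}\fint_{Q_{2\rho}(\bar z)}\hat u^2\,\mu(dz)$ that is allowed in the conclusion. The paper closes this by also multiplying the test function by a smooth time cutoff $\sigma\in C_0^\infty(\bar t-4\rho^2,\bar t+4\rho^2)$ with $\sigma\equiv 1$ on $\Gamma_\rho(\bar t)$ and $|\sigma'|\le 4\rho^{-2}$: the bottom boundary term then vanishes because $\sigma(\bar t-4\rho^2)=0$, and the only new contribution is $\int\sigma'\,\hat u^2\eta^2\,\mu(dz)$, which is of the admissible order $\rho^{-2}\fint_{Q_{2\rho}}\hat u^2\,\mu(dz)$. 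A ``good-slice'' choice of starting time in $(\bar t-4\rho^2,\bar t-\rho^2)$ would also repair the argument, but one or the other is needed; as written, the integration step does not close.
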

\begin{proof}
Let $\sigma \in C_0^\infty(\bar{t}-4\rho^2,\bar{t}+4\rho^2)$ be a smooth function satisfying $\sigma =1$ on $\Gamma_\rho(\bar{t})$ and $|\sigma'(t)| \leq 4\rho^{-2}$. By taking $\phi = \hat{u}(t,x)  \eta_{\bar{x}, 2\rho}^2(x)  \sigma(t)$ as the test function in  \eqref{Steklov-weak-formula}, integrating in $t$, 
and taking $h \rightarrow 0^+$, we obtain
\[
\begin{split}
& \frac{1}{2}\int_{\bar{t} - 4\rho^2}^\tau \sigma(t) \frac{d}{dt} \left[ \int_{D_{2\rho}(\bar{x})} \hat{u} ^2 \eta_{\bar{x}, 2\rho}^2(x)  \,\mu(dx) \right] \,dt + \int_{\bar{t} - 4\rho^2}^\tau  \int_{D_{2\rho}(\bar{x})}  \partial_t \bar{u}_{\bar{x}, 2\rho}(t) \sigma(t) \eta_{\bar{x}, 2\rho}^2 (t) \hat{u}\,  \mu(dz ) \\
& = - \int_{\bar{t} -4\rho^2}^\tau \int_{D_{2\rho}(\bar{x})} \wei{\A \nabla u - \F, \nabla [\hat{u} \eta_{\bar{x}, 2\rho}^2(x) ] }
 \sigma(t) \,\mu(dz)+ \int_{\bar{t} -4\rho^2}^\tau \int_{D_{2\rho}(\bar{x})} f\hat{u} \eta_{\bar{x}, 2\rho}^2(x)  \sigma(t) \,\mu(dz)
\end{split}
\]
for a.e. $\tau \in (\bar{t} - 4\rho^2, \bar{t})$. Observe that by the definition of $\hat{u}$ it follows that
\begin{equation*}
\int_{\bar{t} - 4\rho^2}^\tau  \int_{D_{2\rho}(\bar{x})}  \partial_t \bar{u}_{\bar{x}, 2\rho}(t) \sigma(t) \eta_{\bar{x}, 2\rho}^2 (t)\, \hat{u}(t,x)  \,\mu(dz)=0.
\end{equation*}
Hence,
\[
\begin{split}
& \frac{1}{2} \int_{D_{2\rho}(\bar{x})} \hat{u} ^2(\tau ,x) \eta_{\bar{x}, 2\rho}^2(x) \sigma(\tau)  \,\mu(dx ) +  \int_{\bar{t} -4\rho^2}^\tau \int_{D_{2\rho}(\bar{x})} \wei{ \A \nabla \hat{u}, \nabla \hat{u} } \eta_{\bar{x}, 2\rho}^2(x)  \sigma(t)\,\mu(dz)  \\
& = - 2 \int_{\bar{t} -4\rho^2}^\tau \int_{D_{2\rho}(\bar{x})} \wei{ \A \nabla \hat{u}, \nabla  \eta_{\bar{x}, 2\rho}(x)}  \hat{u} \eta_{\bar{x}, 2\rho}(x)    \sigma(t) \,\mu(dz)  \\
& \quad + \int_{\bar{t} -4\rho^2}^\tau \int_{D_{2\rho}(\bar{x})} \wei{\F,  \eta_{\bar{x}, 2\rho}^2 \nabla \hat{u} + 2 \hat{u} \eta_{\bar{x},2\rho}  \nabla \eta_{\bar{x},2\rho}  }  \sigma(t) \,\mu(dz) \\
& \quad + \frac{1}{2}\int_{\bar{t} -4\rho^2}^\tau  \int_{D_{2\rho}(\bar{x})} \hat{u} ^2(\tau ,x) \eta_{\bar{x}, 2\rho}^2(x) \sigma'(t)  \, \mu( dz)   + \int_{\bar{t} -4\rho^2}^\tau \int_{D_{2\rho}(\bar{x})} f\hat{u} \eta_{\bar{x}, 2\rho}^2(x)  \sigma(t)\,\mu(dz).
\end{split}
\]
From this and \eqref{ellipticity}, it follows that
\begin{equation} \label{cacao-1}
\begin{split}
& \frac{1}{8 \rho^2} \sup_{\tau}\fint_{D_{2\rho}(\bar{x})} \hat{u} ^2(\tau ,x) \eta_{\bar{x}, 2\rho}^2(x) \sigma(\tau)\,\mu(dx)  +    \Lambda^{-1}\fint_{Q_{2\rho}(\bar{z})} |D \hat{u}|^2 \eta_{\bar{x}, 2\rho}^2(x)  \sigma(t)  \,{\mu}(dz)  \\
& \leq C(\Lambda) \fint_{Q_{2\rho}(\bar{z})} |D \hat{u}| |D \eta_{\bar{x}, 2\rho}|  |\hat{u} |\eta_{\bar{x}, 2\rho} \sigma(t) \,{\mu}(dz)  \\
&\quad + \fint_{Q_{2\rho}(\bar{z})}  |\F| \Big[ \eta_{\bar{x}, 2\rho}^2 |D \hat{u}| + 2 |\hat{u}| \eta_{\bar{x},2\rho} | D \eta_{\bar{x},2\rho}|   \Big]  \sigma(t) \,{\mu}(dz)  \\
& \quad + \frac{1}{2} \fint_{Q_{2\rho}(\bar{z})}  \hat{u} ^2(\tau ,x) \eta_{\bar{x}, 2\rho}^2(x) |\sigma'(t)| \,{\mu}(dz)   +  \fint_{Q_{2\rho}(\bar{z})}|f| |\hat{u}| \eta_{\bar{x}, 2\rho}^2(x)  \sigma(t) \,{\mu}(dz).
\end{split}
\end{equation}
Then, by using H\"{o}lder's inequality and Young's inequality to the first two terms on the right-hand side of \eqref{cacao-1}, we obtain
\[
\begin{split}
& \frac{1}{8 \rho^2} \sup_{\tau}\fint_{D_{2\rho}(\bar{x})} \hat{u} ^2(\tau ,x) \eta_{\bar{x}, 2\rho}^2(x) \sigma(\tau)\,\mu(dx) +   \Lambda^{-1} \fint_{Q_{2\rho}(\bar{z})} |D [ \hat{u} \eta_{\bar{x}, 2\rho}(x)]|^2
\sigma(t) \,{\mu}(dz)  \\
& \leq \frac{1}{2\Lambda}  \fint_{Q_{2\rho}(\bar{z})} |D [\hat{u} \eta_{\bar{x}, 2\rho}(x)]|^2  \sigma(t) \,{\mu}(dz)  \\
& \quad \quad + C(\Lambda) \left[ \rho^{-2} \fint_{Q_{2\rho}(\bar{z})} \hat{u}^2 \,{\mu}(dz)  + \fint_{Q_{2\rho}(\bar{z})} |\F|^2  \,{\mu}(dz) +   \fint_{Q_{2\rho}(\bar{z})}|f| |\hat{u}| \eta_{\bar{x}, 2\rho}^2(x)  \sigma(t) \,{\mu}(dz) \right].
\end{split}
\]
From this, it follows that
\begin{equation} \label{cacao-2}
\begin{split}
& \frac{1}{ \rho^2} \sup_{\tau}\fint_{D_{2\rho}(\bar{x})} \hat{u} ^2(\tau ,x) \eta_{\bar{x}, 2\rho}^2(x) \sigma(\tau)\,\mu(dx) +   \fint_{Q_{2\rho}(\bar{z})} |D [ \hat{u} \eta_{\bar{x}, 2\rho}(x)]|^2  \sigma(t) \,{\mu}(dz)  \\
& \leq C(\Lambda)\left[ \rho^{-2} \fint_{Q_{2\rho}(\bar{z})} \hat{u}^2 \,{\mu}(dz)  + \fint_{Q_{2\rho}(\bar{z})} |\F|^2  \,{\mu}(dz) +   \fint_{Q_{2\rho}(\bar{z})}|f| |\hat{u}| \eta_{\bar{x}, 2\rho}^2(x)  \sigma(t) \,{\mu}(dz) \right].
\end{split}
\end{equation}
We now control the last term on the right-hand side of \eqref{cacao-2}. We write $v(t,x)= \hat{u}(t,x) \eta_{\bar{x}, 2\rho}(x) \sigma(t)$. Note that by \eqref{l-0-prime}, $1 = \frac{1}{2l_0} + \frac{1}{2l_0'}$. Then, by using the H\"{o}lder's inequality, Young's inequality, and then Lemma \ref{weighted-Sobolev-imbed}, we obtain
\[
\begin{split}
& \fint_{Q_{2\rho}(\bar{z})}|f| |\hat{u}| \eta_{\bar{x}, 2\rho}^2(x)  \sigma(t) \,{\mu}(dz) \\
& \leq \left( \fint_{Q_{2\rho}(\bar{z})}\Big| |f| \eta_{\bar{x}, 2\rho}(x) \Big|^{2l_0'}   \,{\mu}(dz) \right)^{1/(2l_0')} \left( \fint_{Q_{2\rho}(\bar{z})} |v|^{2l_0} \,{\mu}(dz) \right )^{1/(2l_0)}  \\
& \leq \frac{\epsilon}{\rho^2} \left( \fint_{Q_{2\rho}(\bar{z})} |v|^{2l_0} \,{\mu}(dz) \right )^{1/l_0} + C(\epsilon) \rho^2  \left( \fint_{Q_{2\rho}(\bar{z})}\Big| |f| \eta_{\bar{x}, 2\rho} (x) \Big|^{2l_0'}   \,{\mu}(dz) \right)^{1/l_0'} \\
& \leq C(n, M_0) \epsilon \left[ \rho^{-2}\sup_{\tau} \fint_{D_{2\rho}} |v(\tau, x)|^2 \,\mu(dx) + \fint_{Q_{2\rho}} |D v(t, x)|^2 \,{\mu}(dz)  \right] \\
&\quad + C(\epsilon) \rho^2  \left( \fint_{Q_{2\rho}(\bar{z})} |f|^{2l_0'}   \,{\mu}(dz) \right)^{1/l_0'},
\end{split}
\]
for any $\epsilon >0$.  From this estimate, \eqref{cacao-2} and by choosing $\epsilon$ sufficiently small, we infer that
\[
\begin{split}
&  \sup_{\tau}\rho^{-2}\fint_{D_{2\rho}(\bar{x})} \hat{u} ^2(\tau ,x) \eta_{\bar{x}, 2\rho}^2(x) \sigma(\tau)\,\mu(dx) +  \fint_{Q_{2\rho}(\bar{z})} |D [ \hat{u}(t,x) \eta_{\bar{x}, 2\rho}(x)]|^2  \sigma(t) \,{\mu}(dz)  \\
& \leq C \left[ \rho^{-2} \fint_{Q_{2\rho}(\bar{z})} \hat{u}^2 \,{\mu}(dz)  + \fint_{Q_{2\rho}(\bar{z})} |\F|^2  \,{\mu}(dz) + \rho^2  \left( \fint_{Q_{2\rho}(\bar{z})} |f|^{2l_0'}   \,{\mu}(dz) \right)^{1/l_0'}\right].
\end{split}
\]
This estimate and the doubling property of $\mu$ implies the desired estimate in the lemma. The proof of the lemma is then complete.
\end{proof}

\begin{lemma} \label{pre-R-Holder}  Let $a_0\equiv 1$. Let $\mu: \bR^n \rightarrow \bR$ such that $\mu \in A_2(\bR^n)$ with $[\mu]_{A_2} \leq M_0$ for some fixed $M_0 >1$. There is $\kappa_0 \in (1,2)$ which depends only on $n, M_0$ such that the following holds. For any $\epsilon >0$, there exists $C_0 = C_0(n,\Lambda, M_0, \epsilon)$ such that for any $\bar{z} = (\bar{t}, \bar{x}) \in Q_{3/2},\ \rho \in (0,1/8)$, and for any weak solution $u$ of \eqref{Q-2-d.eqn}, it holds that
\begin{equation}
                   \label{eq10.55}
\begin{split}
&  \fint_{Q_{\rho}(\bar{z})} |Du|^2 \,{\mu}(dz)   \leq \epsilon \fint_{Q_{4\rho}(\bar{z})} |Du|^2 \,{\mu}(dz)  \\
 & \quad +  C_0\left[\left( \fint_{Q_{4\rho}(\bar{z})} |Du|^{\kappa_0} \,{\mu}(dz) \right)^{2/\kappa_0} +  \fint_{Q_{4\rho}(\bar{z})} |\F|^2  \,{\mu}(dz) +  \rho^2 \left( \fint_{Q_{4\rho}(\bar{z})} |f|^{2l_0'} \,{\mu}(dz) \right)^{1/l_0'}\right].
\end{split}
\end{equation}
\end{lemma}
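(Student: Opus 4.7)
The plan is to combine Caccioppoli's energy estimate (Lemma~\ref{inter-energy}), the weighted spatial Sobolev--Poincar\'e inequality (Lemma~\ref{Sobolev-Poincare}) applied at a sub-quadratic exponent $\kappa_0<2$, a multiplicative (Gagliardo--Nirenberg-type) interpolation, and Young's inequality. Since $[\mu]_{A_2}\le M_0$, Lemma~\ref{Sobolev-Poincare} is valid at any exponent $k$ in a fixed interval $(1, k_0]$ with $k_0=k_0(n,M_0)>1$. I pick $k\in(1,\min(k_0,2))$ and set $\kappa_0:=2/k\in(1,2)$. With this choice, the Sobolev--Poincar\'e inequality provides a spatial control of the oscillation $\hat u$ by $|Du|^{\kappa_0}$, but after integrating in time the resulting integrand is $h(t)^{2/\kappa_0}$ with $2/\kappa_0>1$, and Jensen's inequality cannot convert this to $(\fint h)^{2/\kappa_0}$. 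The gain comes from pairing this pointwise-in-$t$ Sobolev--Poincar\'e bound with the $L^\infty_t L^2_x$ control of $\hat u$ delivered by Caccioppoli, via the elementary interpolation $\min(X,Y)\le X^{\kappa_0/2}Y^{1-\kappa_0/2}$.

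Concretely, apply Lemma~\ref{inter-energy} at radius $\rho$ to obtain
\[
\fint_{Q_\rho}|Du|^2\mu(dz)+\sup_\tau\rho^{-2}\fint_{D_\rho}|\hat u(\tau)|^2\mu\le C\bigl[\rho^{-2}\fint_{Q_{2\rho}}|\hat u|^2\mu(dz)+\cE\bigr],
\]
with $\hat u=u-\bar u_{\bar x,2\rho}(t)$ and $\cE$ denoting the $\F,f$ error terms from Lemma~\ref{inter-energy}. At each time $t$, Lemma~\ref{Sobolev-Poincare} with $p=\kappa_0$, $kp=2$, and $A_R=\bar u_{\bar x,2\rho}(t)$ gives $\fint_{D_{2\rho}}|\hat u(t)|^2\mu\le C\rho^2 h(t)^{2/\kappa_0}$, where $h(t):=\fint_{D_{2\rho}}|Du(t,\cdot)|^{\kappa_0}\mu$. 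Setting $S:=\sup_\tau\fint_{D_{2\rho}}|\hat u(\tau)|^2\mu$, the interpolation $\min(X,Y)\le X^{\kappa_0/2}Y^{1-\kappa_0/2}$ with $X=C\rho^2h(t)^{2/\kappa_0}$ and $Y=S$ yields $\fint_{D_{2\rho}}|\hat u(t)|^2\mu\le C\rho^{\kappa_0}h(t)S^{1-\kappa_0/2}$. Integrating in $t$ and dividing by $\rho^2$ gives $\rho^{-2}\fint_{Q_{2\rho}}|\hat u|^2\mu\le C(\rho^{-2}S)^{1-\kappa_0/2}\fint_{Q_{2\rho}}|Du|^{\kappa_0}\mu$, and Young's inequality with conjugate exponents $\frac{2}{2-\kappa_0}$ and $\frac{2}{\kappa_0}$ then produces, for any $\epsilon_1>0$,
\[
\rho^{-2}\fint_{Q_{2\rho}}|\hat u|^2\mu\le\epsilon_1\rho^{-2}S+C(\epsilon_1)\Bigl(\fint_{Q_{2\rho}}|Du|^{\kappa_0}\mu\Bigr)^{2/\kappa_0}.
\]

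To close the estimate, the term $\rho^{-2}S$ is controlled by a second application of Lemma~\ref{inter-energy} at radius $2\rho$ on the pair $Q_{2\rho}\subset Q_{4\rho}$ (allowed since $Q_{4\rho}(\bar z)\subset Q_2$ for $\bar z\in Q_{3/2}$ and $\rho\in(0,1/8)$) with the centered mean $\bar u_{\bar x,4\rho}(t)$. A triangle-inequality argument combined with a standard $L^2$-bound on the mean difference $|\bar u_{\bar x,2\rho}(t)-\bar u_{\bar x,4\rho}(t)|$ (which follows from the $A_2$ doubling property of $\mu$) allows replacement of $\hat u$ by $u-\bar u_{\bar x,4\rho}(t)$ at a controlled cost; a further application of the standard $p=2$ weighted Poincar\'e inequality on $D_{4\rho}$ converts $\rho^{-2}\fint_{Q_{4\rho}}|u-\bar u_{\bar x,4\rho}|^2\mu$ into $C\fint_{Q_{4\rho}}|Du|^2\mu$, yielding $\rho^{-2}S\le C\fint_{Q_{4\rho}}|Du|^2\mu+\cE'$ with $\cE'$ of the same $\F,f$ type. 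Substituting this into the Young-type bound above and then choosing $\epsilon_1$ small enough so that $C\epsilon_1<\epsilon$ produces~\eqref{eq10.55}. The main obstacle is the interpolation step: a naive integration of the spatial Sobolev--Poincar\'e estimate yields $\fint_{\Gamma_{2\rho}}h(t)^{2/\kappa_0}\,dt$, which Jensen's inequality bounds \emph{below} (not above) by $(\fint h)^{2/\kappa_0}$. The $\min$-interpolation $\min(X,Y)\le X^{\alpha}Y^{1-\alpha}$ is precisely the device that exploits the simultaneous $L^\infty_t L^2_x$ (from Caccioppoli) and $L^2_t W^{1,\kappa_0}_x$ (from Sobolev--Poincar\'e) controls to linearise the time-integrand in $h(t)$, after which Young's inequality delivers the absorbable reverse-H\"older bound.
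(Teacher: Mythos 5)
Your proposal is correct, and it reaches the same reverse H\"older inequality by a genuinely different interpolation route. The paper's proof also opens with the Caccioppoli estimate (Lemma \ref{inter-energy}) and the Poincar\'e at $p=2$, but then it writes $\fint_{Q_{2\rho}}|\hat u|^2\mu$ via Cauchy--Schwarz in time as the product of $\bigl(\sup_t\fint_{D_{2\rho}}|\hat u|^2\mu\bigr)^{1/2}$ and the $L^1_t$ norm of $\bigl(\fint_{D_{2\rho}}|\hat u|^2\mu\bigr)^{1/2}$, and handles the latter by a \emph{spatial} H\"older split at the conjugate pair $(\kappa_0,\kappa)$ with $\kappa=2(n/(n-1)+\gamma_0)$, applying Lemma \ref{Sobolev-Poincare} at $p=\kappa_0$ (Poincar\'e, $k=1$) on the first factor and at $p=2$ (Sobolev to $\kappa$) on the second, followed by H\"older in $t$ and Young. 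You instead apply Lemma \ref{Sobolev-Poincare} once, with sub-quadratic base $p=\kappa_0$ and Sobolev gain to exponent $kp=2$, obtaining pointwise in $t$ the bound $\fint_{D_{2\rho}}|\hat u(t)|^2\mu\le C\rho^2 h(t)^{2/\kappa_0}$ with a superlinear power of $h(t)$, and you linearize it by the elementary interpolation $\min(X,Y)\le X^{\kappa_0/2}Y^{1-\kappa_0/2}$ against the $L^\infty_t L^2_x$ Caccioppoli bound $Y=S$; a single Young step then produces the absorbable estimate. Both arguments hinge on the same ingredients (Caccioppoli, weighted Sobolev--Poincar\'e, an interpolation that trades $L^\infty_t L^2_x$ control for a sub-quadratic spatial exponent, and Young), but your $\min$-interpolation replaces the paper's double-Sobolev-Poincar\'e and time-H\"older splitting with a single SP application and a cleaner bookkeeping of the exponents. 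One shared technical footnote: both proofs invoke a weighted Poincar\'e at a sub-quadratic exponent $\kappa_0<2$, which formally needs $\mu\in A_{\kappa_0}$ with constant controlled by $[\mu]_{A_2}$; this is legitimate via the open-endedness (self-improvement) of $A_p$ classes, but neither argument spells it out. Your approach is actually more forgiving on this point, since $\kappa_0=2/k$ can be taken as close to $2$ as desired, whereas the paper's $\kappa_0=\kappa/(\kappa-1)$ is pinned down by $\gamma_0$ and may sit well below $2$.
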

\begin{proof}
Denote
\[
\mathcal{F}(\rho) =  \fint_{Q_{\rho}(\bar{z})} |\F|^2 \,{\mu}(dz) +  \rho^2 \left( \fint_{Q_{\rho}(\bar{z})} |f|^{2l_0'} \,{\mu}(dz) \right)^{1/l_0'}.
\]
By the Poincar\'e inequality, Lemma \ref{Sobolev-Poincare}, it follows that
\[
\rho^{-2} \fint_{Q_{2\rho}(\bar{z})} |u - \bar{u}_{\bar{x}, 2\rho}(t)|^2 \,{\mu}(dz) \leq C(n, M_0) \fint_{Q_{2\rho}(\bar{z})} |D u|^2 \,{\mu}(dz).
\]
This estimate and Lemma \ref{inter-energy} imply that
\begin{equation} \label{R-2rho}
\rho^{-2} \sup_{t \in \Gamma_{\rho}(\bar{t})} \fint_{D_{\rho}(\bar{x})} |u - \bar{u}_{\bar{x}, 2\rho}(t)|^2 \,\mu(dx) \leq C(n,\Lambda, M_0) \left[  \fint_{Q_{2\rho}(\bar{z})} |D u|^2 \,{\mu}(dz) + \mathcal{F}(2\rho) \right].
\end{equation}
Then, with the notation that $\hat{u} = u - u_{\bar{x}, 2\rho}(t)$, it follows from H\"{o}lder's inequality that
\begin{equation} \label{R-4rho}
\begin{split}
&\rho^{-2}\fint_{Q_{2\rho}(\bar{z})} |\hat{u}|^2 \,{\mu}(dz) \\
&\leq \rho^{-2}
\left[ \sup_{t\in \Gamma_{2\rho}(\bar{t})} \left( \fint_{D_{2\rho}(\bar{x})} |\hat{u}|^2 \,\mu(dx) \right)^{1/2}\right] \left[ \fint_{\Gamma_{2\rho}(\bar{t})} \left( \fint_{D_{2\rho}(x_0)} |\hat{u}|^2 \,\mu(dx) \right)^{1/2} \,dt \right] \\
& \leq C \rho^{-1} \left[\left( \fint_{Q_{4\rho}(\bar{z})} |D u|^2 \,{\mu}(dz) \right)^{1/2}+ \mathcal{F}(4\rho)^{1/2} \right]  \left[ \fint_{\Gamma_{2\rho}(\bar{t})} \left( \fint_{D_{2\rho}(\bar{x})} |\hat{u}|^2 \,\mu(dx) \right)^{1/2} \,dt \right],
\end{split}
\end{equation}
where we have used \eqref{R-2rho} in the last estimate with $\rho$ replaced by $2\rho$. We now control the last factor on the right-hand side of \eqref{R-4rho}. Let us denote $\kappa= 2 \Big ( \frac{n}{n-1} + \gamma_0 \Big) >2$, where $\gamma_0$ is a number depending on $n$ and $M_0$ defined in Lemma \ref{Sobolev-Poincare}. Then, let $\kappa_0 \in (1,2)$ such that $\frac{1}{\kappa} + \frac{1}{\kappa_0} =1$. From this, H\"{o}lder's inequality and the Sobolev-Poincar\'e inequality (see Lemma \ref{Sobolev-Poincare}), it follows that
\[
\begin{split}
&\rho^{-1} \fint_{\Gamma_{2\rho}(\bar{t})} \left( \fint_{D_{2\rho}(\bar{x})} |\hat{u}|^2 \,\mu(dx) \right)^{1/2} \,dt  \\
& \leq \rho^{-1} \fint_{\Gamma_{2\rho}(\bar{t})} \left[ \left( \fint_{D_{2\rho}(\bar{x})} |\hat{u}|^{\kappa_0} \,\mu(dx) \right)^{\frac{1}{2\kappa_0}}  \left( \fint_{D_{2\rho}(\bar{x})} |\hat{u}|^{\kappa} \,\mu(dx) \right)^{\frac{1}{2 \kappa}}  \right] \,dt \\
& \leq C(n, M_0)  \fint_{\Gamma_{2\rho}(t_0)} \left[ \left( \fint_{D_{2\rho}(\bar{x})} |D u|^{\kappa_0} \,\mu(dx) \right)^{\frac{1}{2\kappa_0}}  \left( \fint_{D_{2\rho}(\bar{x})} |D u|^{2} \,\mu(dx) \right)^{\frac{1}{4} } \right] \,dt.
\end{split}
\]
We then use H\"{o}lder's inequality 
for the time integration in the last estimate to infer that
\begin{align*}
&\rho^{-1} \fint_{\Gamma_{2\rho}(\bar{t})} \left( \fint_{D_{2\rho}(\bar{x})} |\hat{u}|^2 \,\mu(dx) \right)^{1/2} \,dt\\
&\leq C(n,M_0)  \left( \fint_{Q_{2\rho}(\bar{z})} |D u|^{\kappa_0} \,{\mu}(dz) \right)^{\frac{1}{2\kappa_0}} \left ( \fint_{Q_{2\rho}(\bar{z})} |D u|^{2} \,{\mu}(dz) \right)^{\frac{1}{4}}.
\end{align*}
The last estimate, together with \eqref{R-4rho} and Young's inequality, implies that
\[
\begin{split}
&\rho^{-2}\fint_{Q_{2\rho}(\bar{z})} |\hat{u}|^2 \,{\mu}(dz)\leq C(n, M_0)\left[\left( \fint_{Q_{4\rho}(\bar{z})} |D u|^2 \,{\mu}(dz) \right)^{1/2}+ \mathcal{F}(4\rho)^{1/2} \right]\\
&\quad\cdot \left( \fint_{Q_{4\rho}(\bar{z})} |D u|^{\kappa_0} \,{\mu}(dz) \right)^{\frac{1}{2\kappa_0}} \left ( \fint_{Q_{4\rho}(\bar{z})} |D u|^{2} \,{\mu}(dz) \right)^{\frac{1}{4}} \\
& \leq \epsilon \fint_{Q_{4\rho}(\bar{z})} |D u|^2 \,{\mu}(dz)  + C(n, M_0, \epsilon) \left[\left( \fint_{Q_{4\rho}(\bar{z})} |D u|^{\kappa_0} \,{\mu}(dz) \right)^{\frac{2}{\kappa_0}}  + \mathcal{F}(4\rho) \right].
\end{split}
\]
From this estimate and Lemma \ref{inter-energy}, we get \eqref{eq10.55}
and the lemma is proved.
\end{proof}

The following result is the main result of this section.
\begin{proposition}[Weighted reverse H\"{o}lder's inequality] \label{reverse-holder-inter}
For each $\Lambda>0, M_0 \geq 1$, there exists $\epsilon_0 \in (0,1)$  depending only on $n$, $\Lambda$, and $M_0$ such that the following statement holds. If $\mu \in A_2(\bR^n)$ with $[\mu]_{A_2} \leq M_0$, we assume $(1+n)(1/l_0'-1)\le 1$, and if $\mu \in A_2(\bR)$ with $[\mu]_{A_2} \leq M_0$, we assume $(3+n)(1/l_0'-1)\le 2$. 
Then for any $q \in [2, 2+ \epsilon_0]$, there exists $C_0 = C_0(n,\Lambda,M_0, q)$ such for any weak solution $u$ of \eqref{Q-2-d.eqn}, it holds that
\begin{equation}
                            \label{eq11.16}
\begin{split}
&  \fint_{Q_{r}(\bar{z})} |Du|^q \,{\mu}(dz)   \\
&\leq  C_0\left[\left( \fint_{Q_{2r}(\bar{z})} |Du|^{2} \,{\mu}(dz) \right)^{q/2} +  \fint_{Q_{2r}(\bar{z})} |\F|^q  \,{\mu}(dz) + \mathcal{G}_{Q_2} (f)^q  \fint_{Q_{2r}(\bar{z})} |f|^{l_0'q} \,{\mu}(dz) \right]
\end{split}
\end{equation}
for any $\bar{z} \in Q_{3/2}$ and $r \in (0,1/2)$, where
\begin{equation}
                \label{G-function}
 \mathcal{G}_{Q_2} (f) =  \left( \fint_{Q_{2}} |f|^{2l_0'} \,{\mu}(dz) \right)^{\frac{1}{2l_0'}- \frac{1}{2}}.
\end{equation}
\end{proposition}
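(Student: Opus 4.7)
The plan is to promote the ``below-$2$'' reverse H\"older bound of Lemma \ref{pre-R-Holder} to a true Gehring-type self-improvement above $2$, in three steps: (i) absorb the $\epsilon\fint|Du|^2$ term; (ii) convert the $L^{2l_0'}$-type forcing involving $f$ into an honest $\mu$-average of a single function, which is where the hypotheses on $l_0'$ enter; and (iii) apply Gehring's self-improvement lemma in the parabolic doubling quasi-metric space $(\bR^{n+1},d_{\mathrm{par}},\mu(dz))$.

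For (i), I would repeat the derivation of Lemmas \ref{inter-energy}--\ref{pre-R-Holder} with two concentric cutoff radii $\rho\le \rho_1<\rho_2\le 2\rho$ in place of a fixed ratio, so that the $\epsilon$-factor multiplies $\fint_{Q_{\rho_2}}|Du|^2\mu(dz)$, and then apply the standard Giaquinta iteration lemma to $h(r):=\fint_{Q_r(\bar z)}|Du|^2\mu(dz)$. Choosing $\epsilon$ small, this yields
\[
\fint_{Q_\rho(\bar z)}|Du|^2\mu(dz)\le C\Big[\Big(\fint_{Q_{2\rho}(\bar z)}|Du|^{\kappa_0}\mu(dz)\Big)^{\!2/\kappa_0}\!+\fint_{Q_{2\rho}(\bar z)}|\F|^2\mu(dz)+\rho^2\Big(\fint_{Q_{2\rho}(\bar z)}|f|^{2l_0'}\mu(dz)\Big)^{\!1/l_0'}\Big].
\]

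For (ii), set $A:=\fint_{Q_{2\rho}(\bar z)}|f|^{2l_0'}\mu(dz)$ and $B:=\fint_{Q_2}|f|^{2l_0'}\mu(dz)$. Since $Q_{2\rho}(\bar z)\subset Q_2$ and $1/l_0'-1>0$, one has $A\le (\mu(Q_2)/\mu(Q_{2\rho}))B$, hence
\[
\rho^2 A^{1/l_0'}=\rho^2 A\cdot A^{1/l_0'-1}\le \rho^2\Big(\frac{\mu(Q_2)}{\mu(Q_{2\rho})}\Big)^{\!1/l_0'-1}\mathcal{G}_{Q_2}(f)^2\,A.
\]
The $A_2$ reverse-doubling lower bounds $\mu(Q_{2\rho})\gtrsim\rho^{2n+2}\mu(Q_2)$ when $\mu\in A_2(\bR^n)$, and $\mu(Q_{2\rho})\gtrsim \rho^{n+3}\mu(Q_2)$ when $\mu\in A_2(\bR)$ (the extra $x'$-directions only contribute Lebesgue factors), translate the two hypotheses on $l_0'$ into the uniform-in-$\rho$ boundedness of $\rho^2(\mu(Q_2)/\mu(Q_{2\rho}))^{1/l_0'-1}$. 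Defining $H(z):=|\F(z)|^2+C\mathcal{G}_{Q_2}(f)^2|f(z)|^{2l_0'}$, steps (i) and (ii) combine into the Gehring-ready inequality
\[
\fint_{Q_\rho(\bar z)}|Du|^2\mu(dz)\le C\Big[\Big(\fint_{Q_{2\rho}(\bar z)}|Du|^{\kappa_0}\mu(dz)\Big)^{\!2/\kappa_0}+\fint_{Q_{2\rho}(\bar z)}H\,\mu(dz)\Big].
\]

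For (iii), since $\mu\in A_2$, the measure $\mu(dz)$ is doubling with respect to the parabolic quasi-metric, so the classical Gehring self-improvement lemma in doubling quasi-metric spaces produces $\epsilon_0=\epsilon_0(n,\Lambda,M_0)>0$ such that for any $q\in[2,2+\epsilon_0]$,
\[
\fint_{Q_r(\bar z)}|Du|^q\mu(dz)\le C\Big[\Big(\fint_{Q_{2r}(\bar z)}|Du|^2\mu(dz)\Big)^{\!q/2}+\fint_{Q_{2r}(\bar z)}H^{q/2}\mu(dz)\Big],
\]
and the elementary inequality $H^{q/2}\lesssim |\F|^q+\mathcal{G}_{Q_2}(f)^q|f|^{l_0'q}$ yields \eqref{eq11.16}. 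The main technical obstacle is step (ii): the $\rho^2$ prefactor inherited from the Caccioppoli estimate must exactly compensate the reverse-doubling loss incurred when one pulls the power $1/l_0'>1$ inside the $\mu$-average, and the two distinct reverse-doubling exponents of $\mu$ in the two cases are precisely what forces the two distinct hypotheses on $l_0'$.
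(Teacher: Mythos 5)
Your argument tracks the paper's proof closely: step (ii), using the $A_2$ doubling lower bound $\mu(Q_{2\rho})\gtrsim\rho^{2(n+1)}\mu(Q_2)$ (resp.\ $\rho^{n+3}\mu(Q_2)$) together with the hypothesis on $l_0'$ to absorb $\rho^2(\fint|f|^{2l_0'}\mu(dz))^{1/l_0'}$ into $\fint|\tilde f|^2\mu(dz)$ with $\tilde f=\mathcal{G}_{Q_2}(f)|f|^{l_0'}$, is exactly the paper's computation, as is the concluding Gehring step. Two minor remarks: the version of Gehring's lemma cited in the paper (Giaquinta, Ch.\ V) already tolerates the $\epsilon\fint_{Q_{4\rho}}|Du|^2$ term on the right, so the two-radius re-derivation and Giaquinta-iteration preprocessing in your step (i), while workable, is an unnecessary detour; and since Lemmas \ref{inter-energy} and \ref{pre-R-Holder} assume $a_0\equiv 1$, to cover general $a_0$ you should add the paper's closing reduction, absorbing $a_0$ into $\mu$ and replacing $\A,\F,f$ by $\A/a_0,\F/a_0,f/a_0$.
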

\begin{proof} 
If $\mu \in A_2(\bR^n)$ with $[\mu]_{A_2(\bR^n)} \leq M_0$, it follows that
\[
\frac{{\mu}(Q_2)}{{\mu}(Q_{r}(\bar{z}) )} = \frac{4 \mu(D_2)}{r^2 \mu(D_{r}(\bar x)}  \leq \frac{C(n,M_0)}{r^2} \left(\frac{|D_2|}{|D_r(\bar x)|}\right)^2 \leq C(n, M_0) r^{-2(n+1)}.
\]
Similarly, if $\mu \in A_2(\bR)$ with $[\mu]_{A_2(\bR)} \leq M_0$, it also holds that
\[
\frac{{\mu}(Q_2)}{{\mu}(Q_{r}(\bar{z})} \leq \frac{C(M_0, n)}{r^{n+1}} \left(\frac{|(-2,2)|}{|(\bar x-r,\bar x+r)|}\right)^2 \leq C(M_0, n) r^{-(n+3)}.
\]
Then, by the condition on $l_0'$, we obtain
\[
r^{2} \left( \frac{{\mu}(Q_2)}{{\mu}(Q_{r}(\bar{z})}\right)^{\frac{1}{l_0'} -1} \leq C(M_0, n), \quad \forall r \in (0,1/2), \quad \forall \ \bar{z} \in Q_{3/2}.
\]
Therefore, for any $\bar{z} \in Q_{3/2}$ and $\rho \in (1, 1/8)$,
\begin{align*}
&\rho^2 \left( \fint_{Q_{4\rho}(\bar{z})} |f|^{2l_0'} \,{\mu}(dz) \right)^{1/l_0'}=\rho^2 \left( \fint_{Q_{4\rho}(\bar{z})} |f|^{2l_0'} \,{\mu}(dz) \right)^{1/l_0'-1}\left( \fint_{Q_{4\rho}(\bar{z})} |f|^{2l_0'} \,{\mu}(dz) \right)\\
&\le  \rho^2 \left(\frac{{\mu}(Q_2)}{ {\mu}(Q_{4\rho}(z_0)} \right)^{\frac{1}{l_0'} -1}\left( \fint_{Q_{2}} |f|^{2l_0'} \,{\mu}(dz) \right)^{1/l_0'-1}\left( \fint_{Q_{4\rho}(\bar{z})} |f|^{2l_0'} \,{\mu}(dz) \right) \\
&\le C(M_0, n) \mathcal{G}^2_{Q_2}(f)\left( \fint_{Q_{4\rho}(\bar{z})} |f|^{2l_0'} \,{\mu}(dz) \right) = C(M_0, n)  \fint_{Q_{4\rho}(\bar{z})} |\tilde{f}|^{2} \,{\mu}(dz),
\end{align*}
where $\tilde{f} =  \mathcal{G}_{Q_2}(f) |f|^{l_0'}$. Then in the special case when $a_0\equiv 1$, from the above estimate and Lemma \ref{pre-R-Holder}, there is $\kappa_0 \in (1,2)$ such that  with $\epsilon \in (0,1)$
\begin{equation*}
\begin{split}
&  \fint_{Q_{\rho}(\bar{z})} |Du|^2 \,{\mu}(dz)   \leq \epsilon \fint_{Q_{4\rho}(\bar{z})} |Du|^2 \,{\mu}(dz)  \\
 & \quad \quad +  C_0(\Lambda, M_0, n, \epsilon)\left[\left( \fint_{Q_{4\rho}(\bar{z})} |Du|^{\kappa_0} \,{\mu}(dz) \right)^{2/\kappa_0} +  \fint_{Q_{4\rho}(\bar{z})} |\F|^2  \,{\mu}(dz) +  \fint_{Q_{4\rho}(\bar{z})} |\tilde{f}|^{2} \,{\mu}(dz) \right]
\end{split}
\end{equation*}
for any $\rho \in (0, 1/8)$ and $\bar{z} \in Q_{3/2}$. Now  \eqref{eq11.16} is a consequence of the well-known Gehring lemma; see, for instance, \cite[Ch. V]{Giaquinta}.

 For the general case, we can first absorb $a_0$ to $\mu$ and replace $\A, \F$ and $f$ with $\A/a_0, \F/a_0$ and $f/a_0$, respectively. The proposition is proved.
\end{proof}

\begin{remark} In the special case when $\mu =1$, Proposition \ref{reverse-holder-inter} as well as Proposition \ref{reverse-holder-bdr} below still hold with $l_0=(n+2)/n$ and $l_0'=(n+2)/(n+4)$. In this case, we have
\[
\frac{{\mu}(Q_2)}{{\mu}(Q_{r}(\bar{z}))} \leq C(n) r^{-(n+2)} \quad \text{and}\quad (2+n)(1/l_0'-1)= 2.
\]
Therefore, the proofs in the subsequent sections also work when $\mu =1$ and $l_0=(n+2)/n$.
\end{remark}

\subsection{Weighted reverse H\"{o}lder's inequality on flat boundary}
Throughout this subsection, we assume that $\A = (\A_{ij})_{i,j =1,\ldots, n}: Q_2^+: \rightarrow \bR^{n\times n}$ is measurable and uniformly elliptic. 
Assume also that $\mu $ is an $A_2$ weight either on $\bR^n$ or $\bR$. For each measurable vector field $\F: Q_2^+ \rightarrow \bR^n$ and each measurable function $f :Q_\rho^+ \rightarrow \bR$ satisfying  $|\F|\in L^2(Q_2^+, {\mu}), |f| \in L^{2l_0'}(Q_2^+, {\mu})$, we consider the equation
\begin{equation} \label{Q-2-bdr.eqn}
\left\{
\begin{array}{ccll}
\mu(x)  a_0(x_n) u_t - \textup{div}[\mu(x) (\mathbf{A}  \nabla u - \F)]  & =  & \mu(x)f \\
\displaystyle{\lim_{x_n \rightarrow 0^+}\wei{\mu(x)(\A \nabla u - \F), \mathbf{e}_n }} & = &0
\end{array} \right. \text{in}\,\,   Q_2^+.
\end{equation}
By weak solution of \eqref{Q-2-bdr.eqn} we mean a function $u \in V^{1,2}(Q_2^+,\mu)$ satisfying
\begin{align*}  
-\int_{Q_2^+}\mu(x) a_0(x_n) u\varphi_t \,dz + \int_{Q_2^+} \mu(x )\wei{ (\mathbf{A} \nabla u - \F), \nabla \varphi} \,dz = \int_{Q_2^+} \mu(x)f \varphi \,dz
\end{align*}
for any $\varphi \in C_0^\infty(Q_2)$.
The following weighted reverse H\"{o}lder's inequality can be proved exactly the same way as that of Proposition \ref{reverse-holder-inter}.

\begin{proposition}[Weighted reverse H\"{o}lder inequality] \label{reverse-holder-bdr} For each $\Lambda>0$ and $M_0 \geq 1$, there is $\epsilon_0 \in (0,1)$ depending only on $n,\Lambda, M_0$ such that the following statement holds. If $\mu \in A_2(\bR^n)$ with $[\mu]_{A_2(\bR^n)} \leq M_0$, we assume $(1+n)(1/l_0'-1)\le 1$, and if $\mu \in A_2(\bR)$ with $[\mu]_{A_2(\bR)} \leq M_0$, we assume $(3+n)(1/l_0'-1)\le 2$. Then for any $q \in [2, 2+ \epsilon_0]$, there exists $C_0 = C_0(n,\Lambda, M_0, q)$ such for any weak solution $u$ of \eqref{Q-2-bdr.eqn}, it holds that
\[
\begin{split}
&  \fint_{Q_{r}^+(\bar z)} |Du|^q \,{\mu}(dz)  \\
&\leq  C_0\left[\left( \fint_{Q_{2r}^+(\bar z)} |Du|^{2} \,{\mu}(dz) \right)^{q/2} +  \fint_{Q_{2r}^+(\bar z)} |\F|^q  \,{\mu}(dz) +  \mathcal{G}_{Q_2^+}(f)^q \fint_{Q_{2r}^+(\bar z)} |f|^{l_0'q} \,{\mu}(dz) \right]
\end{split}
\]
for any $\rho \in (0,1/4)$ and $\bar z \in Q_{3/2}^+$, where
\[
\mathcal{G}_{Q_2^+}(f) = \left(\fint_{Q_2^+} |f(z)|^{2l_0'}\,dz \right)^{\frac{1}{2l_0'} - \frac{1}{2}}.
\]
\end{proposition}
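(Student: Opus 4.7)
The plan is to mirror the proof of Proposition \ref{reverse-holder-inter} step by step, replacing every interior cylinder $Q_\rho(\bar z)$ by the half-cylinder $Q_\rho^+(\bar z)$, and using the conormal boundary condition in \eqref{Q-2-bdr.eqn} to neutralize the only extra contribution that could appear on $\{x_n = 0\}$. Since the interior proof consists of three pieces---a weighted Caccioppoli estimate (Lemma \ref{inter-energy}), a reverse-H\"older-with-$\epsilon$ inequality (Lemma \ref{pre-R-Holder}), and Gehring's self-improvement lemma---I would produce half-space analogues of the first two and then invoke Gehring unchanged.

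First, for the boundary Caccioppoli estimate, I would test the Steklov-averaged weak formulation of \eqref{Q-2-bdr.eqn} against $\varphi = [u(t,x) - \bar u_{\bar x, 2\rho}(t)]\, \eta_{\bar x, 2\rho}^2(x)\, \sigma(t)$, where the weighted mean $\bar u_{\bar x, 2\rho}(t)$ is now taken over $D_{2\rho}^+(\bar x)$ with respect to $\eta_{\bar x, 2\rho}^2 \mu$. Because $\eta_{\bar x,2\rho}$ vanishes on $\partial D_{2\rho}(\bar x)\cap\{x_n>0\}$, the only part of $\partial D_{2\rho}^+(\bar x)$ contributing an integration-by-parts boundary term lies in $\{x_n=0\}$, and the conormal condition $\lim_{x_n\to 0^+}\langle\mu(\mathbf{A}\nabla u - \mathbf{F}),\mathbf{e}_n\rangle = 0$ annihilates it. The rest---the Young's-inequality absorption of the gradient terms and the use of Lemma \ref{weighted-Sobolev-imbed} to control the $f$-term---translates verbatim to $Q_{2\rho}^+$, since the weighted parabolic embedding is built from Lemma \ref{Sobolev-Poincare} together with H\"older and Young, all of which have half-ball analogues.

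Next, I would derive the half-cylinder version of Lemma \ref{pre-R-Holder}. The required Sobolev--Poincar\'e inequality on $D_\rho^+$ follows from Lemma \ref{Sobolev-Poincare} by the standard even-extension trick: since $\mu$ depends only on $x_n$, its reflection across $\{x_n=0\}$ preserves the $A_2$ constant, so evenly extending a function on $D_\rho^+$ reduces the inequality to the full-ball case. The chain of H\"older and Young steps from \eqref{R-4rho} then produces the $Q^+$-analogue of \eqref{eq10.55}. To convert the $\rho^2 (\fint|f|^{2l_0'})^{1/l_0'}$ term into $\fint|\tilde f|^2$ with $\tilde f = \mathcal{G}_{Q_2^+}(f)\,|f|^{l_0'}$, I would use the polynomial bound $\mu(Q_2^+)/\mu(Q_r^+(\bar z)) \le C r^{-2(n+1)}$ (respectively $Cr^{-(n+3)}$ when $\mu\in A_2(\bR)$), which follows from the $A_2$ Cauchy--Schwarz inequality $\mu(I')/\mu(I)\le [\mu]_{A_2}(|I'|/|I|)^2$ applied to the (possibly off-center) nested intervals coming from $D_r^+(\bar x)\subset D_2^+$, after enlarging $(0,2)$ to $(-2,2)$ at the cost of an $M_0$-dependent constant. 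Gehring's lemma (\cite[Ch.~V]{Giaquinta}) then upgrades this to the claimed estimate with the same $\epsilon_0$ as in the interior case.

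The main technical point---rather than a genuine obstacle---is uniformity as $\bar x_n\downarrow 0$: the half-ball $D_r^+(\bar x)$ becomes strongly off-center when $\bar x_n < r$, and one must verify that the Sobolev--Poincar\'e constants, the parabolic embedding constants, and the doubling ratios depend only on $n$ and $M_0$, not on the base point. The even-extension and symmetric-enlargement devices above accomplish this cleanly, so the final statement inherits the same $\epsilon_0$ and the same dependence of $C_0$ on the parameters as Proposition \ref{reverse-holder-inter}.
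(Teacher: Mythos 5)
Your proposal is correct and follows essentially the same route as the paper, which in fact disposes of this proposition with a single sentence referring back to Proposition \ref{reverse-holder-inter}; you have simply made explicit the three places where the half-space context might deviate (the vanishing boundary term via the conormal condition, the half-cylinder Sobolev--Poincar\'e inequality, and the measure-ratio bounds). One small point worth tightening: the even-extension trick you invoke for the Sobolev--Poincar\'e inequality on $D_\rho^+(\bar x)$ reduces cleanly to the full-ball case only when $\bar x_n = 0$; for $0 < \bar x_n < \rho$ the reflected domain is not $D_\rho(\bar x)$. The standard remedy, used elsewhere in the paper (see the dichotomy in Proposition \ref{G-approx-propos}), is to treat $\bar x_n \ge 2\rho$ as the interior case and, when $\bar x_n < 2\rho$, replace $\bar x$ by $(\bar x',0)$ and work on the centered half-cylinder of a comparable radius, which is compatible with even reflection and keeps all constants depending only on $n$ and $M_0$. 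You allude to this under ``symmetric enlargement'' but do not spell it out; once that step is made explicit, the rest of your argument goes through as written.
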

\section{Lipschitz estimates} \label{Lipschits-Section}
In this section we prove several Lipschitz estimates for weak solutions of homogeneous parabolic equations with coefficients that are independent of $(t,x')$, but singular and degenerate in $x_n$-variable. The results will be used later to prove the main theorems. 
The following simple observation will be useful in this section.
\begin{lemma}
Assume that $[\mu]_{A_2(\bR)} \leq M_0$ for some $M_0 >1$ and \eqref{eq11.42c} holds. Then, for any $R\in (0,\infty)$ and a.e. $r\in (0, R]$, it holds that
\begin{equation}
                                \label{eq11.42b}
\left( \int_0^r \mu(s)\,ds \right) \left( \int_0^R \mu(s)\,ds \right) \le CM_0 R^2\mu^2(r).
\end{equation}
\end{lemma}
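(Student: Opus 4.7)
The plan is to derive \eqref{eq11.42b} by producing two separate one-sided bounds on $\int_0^r \mu$ and $\int_0^R \mu$ and then multiplying them. Concretely, I will aim to show that, for a.e.\ $r\in (0,R]$,
\begin{equation*}
\int_0^r \mu(s)\,ds \le C\, r\,\mu(r) \qquad \text{and} \qquad \int_0^R \mu(s)\,ds \le C\,\frac{R^2}{r}\,\mu(r),
\end{equation*}
with $C$ depending on $M_0$ and $K_0$; multiplying these immediately gives \eqref{eq11.42b}.

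The decisive step, and the one I expect to be the main obstacle, is establishing a pointwise-like lower bound on $\int_0^{2r}\mu^{-1}$ of the form $r/(K_0\mu(r))$ for a.e.\ $r$. This is precisely where condition \eqref{eq11.42c} enters. Applying \eqref{eq11.42c} with $y=r$ and $\rho=r/2$ (using continuity of the measure to reach the endpoint) yields $\int_{r/2}^{3r/2}\mu \le K_0\, r\,\mu(r)$. Then Cauchy--Schwarz on $(r/2,3r/2)$ gives
\begin{equation*}
r^2 \le \int_{r/2}^{3r/2}\mu(s)\,ds \cdot \int_{r/2}^{3r/2}\mu^{-1}(s)\,ds,
\end{equation*}
so dividing produces the key estimate
\begin{equation*}
\int_{r/2}^{3r/2}\mu^{-1}(s)\,ds \ge \frac{r}{K_0\,\mu(r)}.
\end{equation*}

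With this lower bound in hand, both target estimates follow by the same move via the $A_2$ hypothesis. Applying the definition of $A_2$ to the interval $(0,2r)$ (centered at $r$ with radius $r$) gives $\int_0^{2r}\mu\cdot \int_0^{2r}\mu^{-1}\le 4M_0 r^2$; combining with monotonicity $\int_0^{2r}\mu^{-1} \ge \int_{r/2}^{3r/2}\mu^{-1} \ge r/(K_0\mu(r))$, one concludes $\int_0^r\mu \le \int_0^{2r}\mu \le 4M_0 K_0\, r\,\mu(r)$, the first target bound. For the second, the same reasoning with $(0,2R)$ in place of $(0,2r)$ works because $(r/2,3r/2)\subset (0,2R)$ whenever $r\le R$, so the same lower bound on $\int_{r/2}^{3r/2}\mu^{-1}$ applies, yielding $\int_0^R\mu \le \int_0^{2R}\mu \le 4M_0 K_0\, R^2\mu(r)/r$. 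Multiplying the two bounds produces $\int_0^r\mu \cdot \int_0^R\mu \le 16 M_0^2 K_0^2\, R^2\mu(r)^2$, which is \eqref{eq11.42b}. The one point of care is the a.e.\ restriction in both $r$-hypotheses, but since \eqref{eq11.42c} and Lebesgue's differentiation are used at the same scale $y=r$, the two sets of bad points coincide and the conclusion holds for a.e.\ $r\in(0,R]$.
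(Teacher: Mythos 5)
Your proof is correct and uses the same essential ingredients as the paper's --- the $A_2$ condition, Cauchy--Schwarz to compare $\int\mu$ with $\int\mu^{-1}$ on an interval, and \eqref{eq11.42c} applied at $y=r$ with $\rho\uparrow r/2$ --- but the organization differs. The paper works directly with the product: it replaces $\int_0^R\mu$ by $M_0R^2\bigl(\int_0^R\mu^{-1}\bigr)^{-1}$ via $A_2$, shrinks $\int_0^R\mu^{-1}$ to $\int_0^r\mu^{-1}$ by monotonicity, applies Cauchy--Schwarz on $(0,r)$ to arrive at $M_0R^2\bigl(r^{-1}\int_0^r\mu\bigr)^2$, and finishes by bounding $\int_0^r\mu\le Cr\mu(r)$ using the doubling property together with \eqref{eq11.42c}. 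You instead isolate the lower bound $\int_{r/2}^{3r/2}\mu^{-1}\ge r/(K_0\mu(r))$ --- which packages \eqref{eq11.42c} and Cauchy--Schwarz into one self-contained estimate --- and then invoke $A_2$ twice, once on $(0,2r)$ and once on $(0,2R)$, to obtain two separate $A_1$-type bounds on $\int_0^r\mu$ and $\int_0^R\mu$ that multiply to give \eqref{eq11.42b}. Both arrangements are valid, and yours is arguably more modular: the role of the one-sided $A_1$-type hypothesis \eqref{eq11.42c} is exhibited in a single preliminary inequality, and the doubling step that the paper cites separately is subsumed into the $A_2$ application on $(0,2r)$. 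The two proofs produce comparable constants, polynomial in $M_0$ and $K_0$.
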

\begin{proof}
It follows from \eqref{eq11.42} with $y=0$ and $r$ replaced by $R$ that
\begin{align*}
\int_0^r \mu(s)\,ds\cdot \int_0^R \mu(s)\,ds &\le M_0 R^2 \int_0^r \mu(s)\,ds\cdot \left(\int_0^R \mu^{-1}(s)\,ds\right)^{-1}\\
&\le M_0 R^2 \int_0^r \mu(s)\,ds\cdot \left(\int_0^r \mu^{-1}(s)\,ds\right)^{-1}\\
&\le M_0 R^2 \left(\frac 1 r\int_0^r \mu(s)\,ds\right)^{2},
\end{align*}
where we used H\"older's inequality in the last inequality. By the doubling property and \eqref{eq11.42c},
$$
\int_0^r \mu(s)\,ds
\le C \int_{r/2}^r \mu(s)\,ds
\le C \int_{r/2}^{3r/2} \mu(s)\,ds\le Cr\mu(r).
$$
The lemma is proved.
\end{proof}
\subsection{Interior Lipschitz estimates}
Let $\hat{z} = (\hat{t}, \hat{x}) \in \bR^{n+1}$ be fixed, where $\hat{x} = (\hat{x}', \hat{x}_n) \in \bR\times \bR^{n-1}$.  Also, let $\bar{\A} : (\hat{x}_n-3 , \hat{x}_n + 3) \rightarrow \bR^{n \times n}$ be measurable.
We assume that $a_0: (\hat{x}_n-3 , \hat{x}_n + 3) \rightarrow \bR$ is  measurable and satisfies
\begin{equation} \label{a-c-cond}
\Lambda^{-1} \leq a_0(x_n) \leq \Lambda, \quad x_n \in (\hat{x}_n-3 , \hat{x}_n + 3).
\end{equation}
We investigate the regularity for weak solutions of
\begin{equation} \label{inter-eq3.32}
\mu(x_n) a_0(x_n) u_t - \textup{div} [\mu(x_n) \bar{\A}(x_n) \nabla u] + \lambda \mu(x_n) u= 0 \quad \text{in}\,\, Q_3(\hat{z}).
\end{equation}

We begin with the following weighted energy estimates of Caccioppoli type for weak solutions of \eqref{inter-eq3.32}. The proof of this lemma is similar to that of Lemma \ref{B-Cacioppolli} below, and thus skipped.
\begin{lemma} \label{inter-Cacioppolli}  Assume that $\mu: \bR \rightarrow \bR_+$ is locally integrable. 
Then, there exists $C = C(n,\Lambda) >0$ such that for any $\lambda \geq 0$ and for any weak solution $u$ of \eqref{inter-eq3.32}, it holds that
\begin{equation}  \label{eq11.58}
\sup_{\tau \in \Gamma_2(\hat{t})}\int_{D_2(\hat{x}) } u^2(\tau,x) \,\mu(dx)
+\int_{Q_2(\hat{z})}\Big[  |Du|^2 + \lambda |u|^2 \Big] \,{\mu}(dz)  \leq C \int_{Q_3(\hat{z})} u^2 \,{\mu}(dz).
\end{equation}
Moreover,  for any $k, j  \in \mathbb{N} \cup \{0\}$, there is $C =  C(n,k, j, \Lambda)$ such that
\begin{equation*} 
\int_{Q_2(\hat{z}) }|\partial_t^{j+1}u |^2 \,{\mu}(dz) + \int_{Q_2(\hat{z})} |DD_{x'}^k \partial_t^ju |^2 \,{\mu}(dz) \le C\int_{Q_3 (\hat{z})} \Big[ |Du|^2 + \lambda |u|^2\Big] \,{\mu}(dz).
\end{equation*}
\end{lemma}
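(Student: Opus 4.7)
For the first estimate \eqref{eq11.58}, I would fix a smooth cutoff $\eta \in C_c^\infty(Q_3(\hat z))$ with $\eta \equiv 1$ on $Q_2(\hat z)$, $|D\eta| \leq C$, $|\eta_t| \leq C$, and test the Steklov-averaged weak formulation of \eqref{inter-eq3.32} (set up exactly as in the derivation of \eqref{Steklov-weak-formula}) with $\phi = u\eta^2$. The Steklov step is essential since weak solutions have no a priori classical regularity in $t$. The $u_t$-contribution, after integration in $t$ up to an arbitrary $\tau \in \Gamma_2(\hat t)$, produces the sup-in-time boundary term $\tfrac12 \int_{D_2(\hat x)} u^2(\tau,\cdot)\eta^2 \mu\,dx$; ellipticity of $\bar{\A}$ yields the $|Du|^2 \eta^2$-term; the $\lambda$-term contributes $\lambda u^2 \eta^2\mu$ nonnegatively; and the cross terms involving $D\eta$ or $\eta_t$ are absorbed through Young's inequality at the price of the $C\int_{Q_3} u^2\mu$ on the right. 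Taking the supremum over $\tau$ gives \eqref{eq11.58}.

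For the higher-derivative bounds, the key observation is that $\mu$, $a_0$, and $\bar{\A}$ depend only on $x_n$, so \eqref{inter-eq3.32} is invariant under translations in the $(t,x')$-directions. Consequently, for each $i<n$ the finite difference quotient $\delta_h^{x_i} u$ is again a weak solution of the same homogeneous equation on a slightly shrunken cylinder; iterating \eqref{eq11.58} along a finite chain $Q_2 \subset Q_{\rho_1} \subset \cdots \subset Q_{\rho_N} = Q_3$, with cutoffs of size $(\rho_{l+1}-\rho_l)^{-1}$, produces $h$-uniform $L^2(\mu)$ bounds on $\delta_h^{x_i} u$. Passing to the limit $h\to 0$ shows $D_{x'} u \in L^2_{\mathrm{loc}}(Q_3,\mu)$ and that it solves the same equation. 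Iterating in each $x'$-direction, and (once time regularity is in hand) in $t$, gives that $D_{x'}^k \partial_t^j u$ is again a weak solution of \eqref{inter-eq3.32}; one final application of \eqref{eq11.58} to this function then bounds $\int_{Q_2} |D D_{x'}^k \partial_t^j u|^2\mu$ by $\int_{Q_3}(|Du|^2 + \lambda u^2)\mu$.

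The step I expect to be most delicate is the $\partial_t^{j+1} u$ bound, since \eqref{eq11.58} controls a full spatial gradient but does not gain an extra time derivative. To produce it, I would test the equation for $w = \partial_t^j u$ with $\phi = \eta^2 \partial_t w$, justified through Steklov averaging. The leading term $\int \mu a_0 (\partial_t w)^2 \eta^2$ is exactly the quantity we want, while $\int \mu \bar{\A} \nabla w \cdot \nabla \partial_t w\,\eta^2$ is reshaped by integration by parts in $t$, using that $\mu$, $a_0$, and $\bar{\A}$ are $t$-independent: the symmetric part of $\bar{\A}$ yields a total time derivative of $\int \bar{\A}^s \nabla w \cdot \nabla w\,\eta^2 \mu$ plus a remainder of the form $\int \bar{\A}^s \nabla w \cdot \nabla w\,\eta|\eta_t|\mu$, and the antisymmetric part reduces to a similar cutoff-derivative remainder by an index swap. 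These remainders and the $D\eta$ cross terms $\int \mu \bar{\A}\nabla w \cdot \partial_t w\,\eta D\eta$ are absorbed via Young's inequality into $\int_{Q_{\rho_2}} |Dw|^2 \mu$, producing $\int_{Q_{\rho_1}} |\partial_t w|^2\mu \leq C\int_{Q_{\rho_2}} |Dw|^2 \mu$ on each nested pair. Iterating this with $w = u,\partial_t u,\ldots, \partial_t^j u$ along the chain of cylinders, combined with the $D_{x'}^k$ estimate already obtained, completes the estimate.
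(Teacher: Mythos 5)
Your treatment of \eqref{eq11.58} via $\phi=u\eta^2$ with Steklov averaging, and of the $D_{x'}^k$ derivatives via finite difference quotients in the translation-invariant directions, is correct and coincides with what the paper does (the paper refers to the proof of Lemma \ref{B-Cacioppolli}). The gap is in your time-derivative step. You claim that after integrating $\int \mu\,\bar{\A}^a_{ij}D_j w\,\partial_t D_i w\,\eta^2\,dz$ by parts in $t$, the antisymmetric part of $\bar{\A}$ ``reduces to a similar cutoff-derivative remainder by an index swap.'' This is not so: the boundary terms and the $\eta_t$-remainder produced by that integration by parts are of the form $\bar{\A}^a_{ij}D_j w\, D_i w$, which vanish identically by antisymmetry, and the surviving integral, after relabeling the dummy indices $i\leftrightarrow j$ and using $\bar{\A}^a_{ji}=-\bar{\A}^a_{ij}$, is simply equal to the original expression --- a tautology, not a reduction. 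Since the ellipticity hypothesis \eqref{ellipticity} does not assume $\bar{\A}$ symmetric, the antisymmetric part cannot be discarded, and the total-time-derivative device controls only the symmetric contribution; your scheme does not in fact yield $\int_{Q_{\rho_1}}|\partial_t w|^2\mu\le C\int_{Q_{\rho_2}}|Dw|^2\mu$.

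The paper's proof handles this differently: test with $\eta^2\partial_t w$ and apply Young's inequality directly to the whole cross term $\int\mu\,\bar{\A}_{ij}D_j w\bigl(\eta^2 D_i\partial_t w + 2\eta D_i\eta\,\partial_t w\bigr)$, accepting a small multiple of $\int|D\partial_t w|^2\mu$ on the right; then apply the gradient Caccioppoli estimate (the analogue of \eqref{eq3.27-1}) to $\partial_t w$ --- which solves the same equation --- on a slightly larger cylinder to convert $\int|D\partial_t w|^2\mu$ back into a small multiple of $\int|\partial_t w|^2\mu$; and finally absorb that small term via a standard iteration over nested cylinders. You anticipate the chain of cylinders, but you would need to replace the antisymmetric-part argument with this Young--Caccioppoli--iterate mechanism (or restrict to symmetric $\bar{\A}$, which is not the paper's setting) for the $\partial_t^{j+1}u$ bound to go through.
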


The following interior Lipschitz estimates for weak solutions of \eqref{inter-eq3.32} is the main result of the subsection.
\begin{proposition} \label{inter-Lipschitz-est} For each $\Lambda >0, M_0 \geq 1$ and $K_0 \geq 1$, there exists a constant $C = C(\Lambda, M_0, K_0)$ such that the following statement holds. Suppose that $\mu \in A_2(\bR)$ with $[\mu]_{A_2(\bR)} \leq M_0$, and 
\begin{equation} \label{extra-radius-1}
\fint_{s_0 -1}^{s_0 +1} \mu(s) \,ds \leq K_0 \mu(s_0) \quad \text{for a .e.} \quad s_0 \in (\hat{x}_n - 1, \hat{x}_n + 1).
\end{equation}
Then, for any $\lambda \geq 0$ and for any weak solution $u$ of \eqref{inter-eq3.32}, we have
\begin{equation}
                                    \label{L-infty-u}
\|u\|_{L^\infty(Q_1(\hat{z}))} \leq C(n,\Lambda, M_0)  \left( \fint_{Q_3(\hat{z})} u^2 \,{\mu} (dz) \right)^{1/2}
\end{equation}
and
\begin{equation*}
\| u_t \|_{L^\infty(Q_1)}  + \| D u\|_{L^\infty(Q_1(\hat{z}))} \leq C(n, \Lambda, M_0, K_0) \left( \fint_{Q_3(\hat{z})} \Big[ |Du|^2 + \lambda |u|^2\Big]\,{\mu} (dz) \right)^{1/2}.
\end{equation*}
\end{proposition}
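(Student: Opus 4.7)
The plan is to prove the two $L^\infty$ estimates sequentially: first \eqref{L-infty-u} on $u$, then $L^\infty$ bounds on the tangential derivatives $\partial_t u$ and $D_{x'}u$, and finally the bound on $\partial_n u$ via a first-order transport reformulation of \eqref{inter-eq3.32}. The crucial structural fact throughout is the invariance of \eqref{inter-eq3.32} under translations in $(t,x')$, since $\mu, a_0, \bar A$ depend only on $x_n$.

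For \eqref{L-infty-u}, I would run a Moser iteration: testing \eqref{inter-eq3.32} against $|u|^{p-2}u\,\eta^2\sigma$ for $p\ge 2$ with the space-time cutoffs from the proof of Lemma \ref{inter-Cacioppolli}, using uniform ellipticity and the favorable sign of the $\lambda\mu u^2$ term, yields an $L^p(\mu)$-Caccioppoli inequality of the form
\[
\sup_{\tau\in\Gamma_{\rho_1}}\int_{D_{\rho_1}(\hat x)}|u|^p\,\mu(dx)+\int_{Q_{\rho_1}(\hat z)}|D(|u|^{p/2})|^2\,\mu(dz) \le \frac{C p^2}{(\rho_2-\rho_1)^2}\int_{Q_{\rho_2}(\hat z)}|u|^p\,\mu(dz).
\]
Feeding $v=|u|^{p/2}$ into the weighted parabolic embedding of Lemma \ref{weighted-Sobolev-imbed} promotes $L^p(\mu)$ to $L^{pl_0}(\mu)$ with $l_0>1$; iterating on a geometric radii sequence $\rho_k\downarrow 1$ starting from $\rho_0=3$ then yields \eqref{L-infty-u} with $C=C(n,\Lambda,M_0)$.

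Next, since $\mu, a_0, \bar A$ do not depend on $(t,x')$, for any multi-index $(j,k')$ the derivative $\partial_t^j D_{x'}^{k'}u$ is again a weak solution of \eqref{inter-eq3.32} on sub-cylinders. Iterated application of Lemma \ref{inter-Cacioppolli} controls their $L^2(\mu)$-norms on $Q_{7/4}(\hat z)$ by $(\int_{Q_3(\hat z)}(|Du|^2+\lambda u^2)\mu(dz))^{1/2}$, and applying the Moser bound from the previous step to $v=\partial_t u$ and $v=\partial_{x_i}u$ ($i<n$) yields the corresponding $L^\infty$ bounds on $Q_{3/2}(\hat z)$. The same argument applied to higher tangential derivatives $D_{x'}^{k'}\partial_t^j u$ gives their $L^\infty$ bounds on shrinking sub-cylinders, all with constants depending only on $n,\Lambda,M_0$.

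The final and most delicate step is the bound on $\partial_n u$; this is where the $A_1$-type hypothesis \eqref{extra-radius-1} becomes essential. Set $\mathcal P(t,x):=\mu(x_n)(\bar A(x_n)\nabla u)_n$. Expanding $\operatorname{div}[\mu\bar A\nabla u]$, using that the coefficients depend only on $x_n$, and applying the Schur-complement identity $\mu\bar A_{in}\partial_n u=(\bar A_{in}/\bar A_{nn})(\mathcal P-\mu\sum_{j<n}\bar A_{nj}\partial_j u)$, the equation \eqref{inter-eq3.32} can be rewritten as the first-order linear transport equation
\[
\partial_{x_n}\mathcal P+\sum_{i<n}\frac{\bar A_{in}(x_n)}{\bar A_{nn}(x_n)}\,\partial_{x_i}\mathcal P=\mu(x_n)R_0(t,x),
\]
where $R_0=a_0 u_t+\lambda u-\sum_{i,j<n}(\bar A_{ij}-\bar A_{in}\bar A_{nj}/\bar A_{nn})\partial_i\partial_j u$ is bounded in $L^\infty$ by the second step (using $\bar A_{nn}\ge\Lambda^{-1}$). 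The transport coefficients depend only on $x_n$ and are bounded by $C(\Lambda)$, so its characteristics $s_0\mapsto(t,x'(s_0),s_0)$ passing through $(t,x',x_n)$ are $C(\Lambda)$-Lipschitz, and integrating along them gives
\[
\mathcal P(t,x',x_n)=\mathcal P(t,x'(s_0),s_0)+\int_{s_0}^{x_n}\mu(s)R_0(t,x'(s),s)\,ds.
\]
The hypothesis \eqref{extra-radius-1} applied at the point $x_n\in(\hat x_n-1,\hat x_n+1)$ yields $\int_{x_n-1}^{x_n+1}\mu(s)\,ds\le 2K_0\mu(x_n)$, so the source integral is bounded by $CK_0\mu(x_n)$. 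To control the anchor $\mathcal P(t,x'(s_0),s_0)$, I would average the identity over $s_0\in(x_n-1,x_n+1)$ and decompose $\mathcal P=\mu\bar A_{nn}\partial_n u+\mu\sum_{j<n}\bar A_{nj}\partial_j u$: the tangential part is $L^\infty$-bounded by $CK_0\|D_{x'}u\|_\infty\mu(x_n)$; the normal part is rewritten along the characteristic as $\bar A_{nn}\frac{d}{ds_0}u(t,x'(s_0),s_0)-\sum_{i<n}\bar A_{in}\partial_i u$, and the total-derivative term is handled by a one-dimensional integration by parts against the Lipschitz primitive $M(s_0):=\int_{x_n-1}^{s_0}\mu(r)\bar A_{nn}(r)\,dr$ of $\mu\bar A_{nn}$, which turns it into boundary and volume terms each bounded by $C\|u\|_\infty\int_{x_n-1}^{x_n+1}\mu(s)\,ds\le CK_0\|u\|_\infty\mu(x_n)$. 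Combining the pieces and dividing through by $\mu(x_n)$ and then by $\bar A_{nn}\ge\Lambda^{-1}$ yields the $L^\infty$-bound on $\partial_n u$. The main obstacle here is that $\mu$ may be unbounded on the averaging interval (for example $\mu(s)=|s|^\alpha$ with $-1<\alpha<0$), so no pointwise bound on $\mu^{-1}$ is available; the combination of the Schur-complement transport reformulation and the $A_1$-type control of $\mu(x_n)^{-1}\int_{x_n-1}^{x_n+1}\mu(s)\,ds$ from \eqref{extra-radius-1} is precisely what makes the pointwise estimate possible.
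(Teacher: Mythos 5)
Your overall two-step outline is the same as the paper's (tangential $L^\infty$ bounds by translation invariance in $(t,x')$, then a separate argument for $\partial_n u$ that uses the $A_1$-type condition \eqref{extra-radius-1}), but the two key technical ingredients differ, and one of them has a gap.

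\textbf{Where you deviate and it still works.} For \eqref{L-infty-u} you run a Moser iteration using Lemma \ref{weighted-Sobolev-imbed}. The paper instead applies the Sobolev embedding $W^{k/2,k}_2(Q'_1)\hookrightarrow L^\infty$ anisotropically in $z'=(t,x')$, with the tangential Sobolev norms fed by Lemma \ref{inter-Cacioppolli}, and then H\"{o}lder against $\mu^{-1}$ in $x_n$ using $A_2$. Both are fine and give $C(n,\Lambda,M_0)$; the Moser iteration is a legitimate alternative because Lemma \ref{weighted-Sobolev-imbed} supplies the exponent gain $l_0>1$ uniformly, so the iteration closes. Your Schur-complement reduction for $\partial_n u$ is also correct algebra and is a genuinely different route: it turns the paper's identity $D_n(\mu U)=\mu[a_0u_t+\lambda u-\sum_{i<n,\,j}\bar A_{ij}D_{ij}u]$ (whose source still contains the mixed second derivatives $D_{in}u$, $i<n$) into a first-order transport equation for $\mathcal P=\mu U$ with a source involving only $u_t$, $u$, and tangential $D_{ij}u$ ($i,j<n$). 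The paper does \emph{not} need this: it controls the $W^{1,1}(dx_n)$-norm of $\mu U$ at each fixed $z'$ by Cauchy--Schwarz against $\mu^{-1}$ (which only requires $L^2_\mu$ control of the source, so the cross terms $D_{in}u$ are harmless), and then invokes the one-dimensional $W^{1,1}\hookrightarrow L^\infty$ embedding. You need $L^\infty$ control of the source because you integrate it pointwise along characteristics, which is why you must first do the Schur reduction.

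\textbf{Where there is a gap.} The integration by parts you use to bound the anchor average $\fint_{x_n-1}^{x_n+1}\mathcal P(t,x'(s_0),s_0)\,ds_0$ does not go through. First, $M(s_0)=\int_{x_n-1}^{s_0}\mu\bar A_{nn}$ is \emph{not} Lipschitz in the regime of interest: $\mu$ may be unbounded (you yourself give the example $\mu(s)=|s|^\alpha$, $\alpha\in(-1,0)$), so $M'=\mu\bar A_{nn}\notin L^\infty$. Second, either way you integrate $\int M'(s_0)\,g'(s_0)\,ds_0$ (with $g(s_0)=u(t,x'(s_0),s_0)$) by parts: one choice produces an endpoint term $M'(x_n\pm 1)\,g(x_n\pm1)=\mu(x_n\pm 1)\bar A_{nn}(x_n\pm1)\,u$, and $\mu(x_n\pm1)$ is \emph{not} controlled by $K_0\mu(x_n)$ — condition \eqref{extra-radius-1} bounds the average of $\mu$ over $(x_n-1,x_n+1)$ from above by $K_0\mu(x_n)$, which says nothing about the values of $\mu$ at the endpoints. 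The other choice produces $M(x_n+1)\,g'(x_n+1)$ and $\int Mg''$, but $g'$ contains $D_nu$ and $g''$ contains $D_{nn}u$ (the very quantities being bounded) as well as $b_i'(s_0)D_iu$ with $b_i=\bar A_{in}/\bar A_{nn}$ merely measurable. So the claim that the boundary and volume terms are each bounded by $C\|u\|_\infty\int\mu$ is unjustified. (Even if it worked, the resulting bound would be in terms of $\|u\|_\infty$, which is controlled by $(\fint u^2\mu)^{1/2}$ via Step I but not by $(\fint[|Du|^2+\lambda|u|^2]\mu)^{1/2}$ as the statement requires.)

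\textbf{How to fix it within your framework.} Drop the IBP and the decomposition of the anchor; simply bound
\[
\fint_{x_n-1}^{x_n+1}|\mathcal P(t,x'(s_0),s_0)|\,ds_0
=\fint_{x_n-1}^{x_n+1}\mu(s_0)|U(t,x'(s_0),s_0)|\,ds_0
\le \frac 12 \left(\int_{x_n-1}^{x_n+1}\mu|U|^2\,ds_0\right)^{1/2}\left(\int_{x_n-1}^{x_n+1}\mu\,ds_0\right)^{1/2},
\]
then use the Caccioppoli estimates of Lemma \ref{inter-Cacioppolli} together with the tangential Sobolev embedding (as in the paper's display \eqref{eq9.36-inter}) to bound the first factor uniformly in $z'$ by $C\big(\int_{Q_3(\hat z)}[|Du|^2+\lambda|u|^2]\,\mu(dz)\big)^{1/2}$, and \eqref{extra-radius-1} plus the doubling of $\mu$ to bound the second factor by $(CK_0\mu(x_n))^{1/2}$ and $\mu(Q_3(\hat z))^{1/2}$ by $(CK_0\mu(x_n))^{1/2}$ up to constants. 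This is precisely what the paper's $W^{1,1}\hookrightarrow L^\infty$ argument achieves directly. You should also shrink the averaging interval so the characteristics, whose tangential excursion is $O(\Lambda^2)$ per unit of $s_0$, remain inside $Q_2(\hat z)$ — the paper avoids this detail because it works along the straight $x_n$-direction.
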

\begin{proof} The proof is divided into two steps. In the first step, we prove the $L^\infty$-estimates for $u, \partial_t u$ and $D_{x'}u$. In the second step, we establish the $L^\infty$-estimate of $D_{n}u$. Without loss of generality, we assume $\hat{z} = 0$. \\ \noindent
{\bf Step I}:  
Noting that from \eqref{L-infty-u}, Lemma \ref{inter-Cacioppolli}, and since $D_{x'} u$ and $u_t$ satisfy the same equation as $u$, it follows that
 \begin{equation} \label{L-infty-D-x'}
 \begin{split}
& \norm{D_{x'} u}_{L^\infty(Q_1)} \leq C \left (\fint_{Q_3} |D_{x'} u|^2 \,{\mu}(dz) \right)^{1/2}, \\
& \norm{u_t}_{L^\infty(Q_1)} \leq C \left (\fint_{Q_3}\Big[ |Du|^2 + {\lambda |u|^2}\Big] \,{\mu}(dz) \right)^{1/2}.
\end{split}
\end{equation}
The proof of \eqref{L-infty-u} uses Lemma \ref{inter-Cacioppolli} and anisotropic Sobolev inequalities, and is identical to that of \eqref{eq11.43} below in the proof of Proposition \ref{B-Lipschitz-est}. We therefore skip it.
\\ \noindent
{\bf Step II}: From \eqref{L-infty-D-x'}, it remains to show that there is a constant  $C = C(n,\Lambda, M_0, K_0)$ such that
\begin{equation} \label{D-n-inter-infity}
\norm{D_n u}_{L^\infty(Q_1)} \leq C \left(\fint_{Q_3} \Big[ |Du|^2 + {\lambda |u|^2}\Big] \,{\mu}(dz)\right)^{1/2}.
\end{equation}
To prove \eqref{D-n-inter-infity}, let us denote
\begin{equation}
                                \label{eq10.23}
U(t,x)=\displaystyle{\sum_{j=1}^n\bar{\A}_{nj}(x_n) D_j u(t,x)}.
\end{equation}
Observe that we have
\begin{equation} \label{eq11.45-inter}
D_n(\mu(x_n) U)=\mu(x_n) \left[ a_0(x_n)u_t  + \lambda u- \sum_{i =1}^{n-1} \sum_{j =1}^n \bar{\A}_{ij}(x_n) D_{ij} u \right].
\end{equation}
Let us fix $z_0 =(z_0', x_{n0}) \in Q_1$. By applying the Sobolev embedding theorem in $z'$, for fixed $z = (z', x_n)\in Q_{1}(z_0) = Q_1'(z_0') \times (-1 + x_{n0}, 1 + x_{n0}) \subset Q_2$,
we have
\begin{align*}
&|U(z',x_n)|+|u_t(z',x_n)|+|DD_{x'}u(z',x_n)|\\
&\le C\|U(\cdot, x_n)\|_{W^{k/2,k}_2(Q_1'(z_0'))}
+C\|u_t(\cdot, x_n)\|_{W^{k/2,k}_2(Q_1'(z_0'))}
+C\|DD_{x'} u(\cdot, x_n)\|_{W^{k/2,k}_2(Q_1'(z_0'))}
\end{align*}
with an even number $k\ge (d+1)/2$. 
Thus, for any fixed $z'\in Q_{1}'(z_0')$, by Lemma \ref{inter-Cacioppolli} and the fact that $Q_{1}(z_0) \subset Q_2$, it follows that
\begin{align}  \label{eq9.36-inter}
 &\int_{x_{n0}-1}^{x_{n0}+ 1} \mu(x_n) \big(|U(z',x_n)|^2+|u_t(z',x_n)|^2+|DD_{x'} u(z',x_n)|^2\big)\, dx_n\nonumber\\
&\le C \int_{x_{n0} -1}^{ x_{n0}+1} \mu(x_n) \big(
\|U(\cdot, x_n)\|^2_{W^{k/2,k}_2(Q_1'(z_0'))}
+\|u_t(\cdot, x_n)\|^2_{W^{k/2,k}_2(Q_1'(z_0'))}\nonumber\\
&\quad \quad +\|DD_{x'} u(\cdot, x_n)\|^2_{W^{k/2,k}_2(Q_1'(z_0'))}\big)\,dx_n\nonumber\\
&\le C \int_{Q_3} \Big[ |Du|^2 + \lambda |u|^2 \Big] \,{\mu}(dz).
\end{align}
From \eqref{eq11.58}, \eqref{eq11.45-inter}, and \eqref{eq9.36-inter}, we get
\begin{equation*}
\int_{x_{n0}-1}^{x_{n0}+ 1} \mu^{-1}(x_n) \big(|D_n(\mu(x_n) U)|^2+|\mu(x_n) U|^2\big)\, dx_n
\le C \int_{Q_3} \Big[ |Du|^2 + {\lambda |u|^2} \Big] \,{\mu}(dz).
\end{equation*}
Thanks  the doubling property of $\mu$ and by H\"older's inequality, for any fixed $z_0 = (z_0', x_{n0}) \in Q_{1}$,
\begin{align*}
&\left(\int_{x_{n0} -1}^{x_{n0} + 1} |D_n(\mu(x_n) U)|+|\mu(x_n)U|\, dx_n\right)^2\\
&\le \left( \int_{x_{n0} - 1}^{ x_{n0} + 1} \mu^{-1}(x_n) \big(|D_n (\mu(x_n) U)|^2+|\mu(x_n) U|^2\big)\, dx_n \right) \left(
\int_{x_{n0} -1}^{x_{n0} + 1} \mu(x_n)\,dx_n \right)\\
&\le C\left( \fint_{Q_3}  \Big[ |Du|^2 + {\lambda |u|^2}\Big] \,{\mu}(dz) \right)
\left(\int_{x_{n0} -1}^{x_{n0} + 1} \mu(x_n)\,dx_n\right)^{2}.
\end{align*}
This estimate implies that
$$
|\mu (x_{n0}) U (z_0)| \le C \left(\fint_{Q_3} \Big[ |Du|^2 + {\lambda |u|^2}\Big]\, {\mu}( dz)\right)^{1/2}\int_{x_{n0} -1}^{x_{n0} + 1} \mu(x_n)\,dx_n,
$$
for a.e. $z_0 = (z_0', x_{n0}) \in Q_1$, which together with \eqref{extra-radius-1} give
$$
\|U\|_{L^\infty(Q_{1})}\le C \left(\fint_{Q_3} \Big[ |Du|^2 + \lambda |u|^2 \Big]\, {\mu}(dz )\right)^{1/2}.
$$
From this and \eqref{L-infty-D-x'}, we conclude \eqref{D-n-inter-infity}. The proof is therefore complete.
\end{proof}
\subsection{Boundary Lipschitz estimates} Let $\bar{\A} : (0,3) \rightarrow \bR^{n \times n}$ be measurable, and $a_0: (0, 3) \rightarrow \bR$ be measurable and satisfy
\[
\Lambda^{-1} \leq a_0(x_n) \leq \Lambda.
\] 
For $\lambda \geq 0$, we investigate some regularity estimates for weak solutions of
\begin{equation} \label{eq3.32}
\left\{
\begin{array}{cccl}
\mu(x_n) a_0(x_n)u_t - \textup{div} [\mu(x_n) \bar{\A}(x_n) \nabla u]  + \lambda \mu(x_n) u & = &0 \\
\displaystyle{\lim_{x_n \rightarrow 0^+} \wei{\mu(x_n) \bar{\A}(x_n) \nabla u, \mathbf{e}_n} } & = & 0
\end{array} \right. \quad \text{in}\,\, Q_3^+.
\end{equation}
We begin our section with the following lemma on weighted energy estimates of Caccioppoli type for weak solutions of \eqref{eq3.32}.
\begin{lemma} \label{B-Cacioppolli}  Assume that $\mu: \bR_+ \rightarrow \bR_+$ is locally integrable. 
Then, there exists $C = C(\Lambda, n) >0$ such that for any weak solution $u$ of \eqref{eq3.32}, it holds that
\begin{equation} \label{eq3.27}
\sup_{\tau \in [-4, 0]}\int_{D_2^+}u^2(\tau,x) \,\mu(dx)
+\int_{Q_2^+} \Big[ |Du|^2  + \lambda |u|^2 \Big]\,{\mu}(dz)  \leq C \int_{Q_3^+} |u|^2 \,{\mu}(dz).
\end{equation}
Moreover,  for any $k, j  \in \mathbb{N} \cup \{0\}$, there is $C =  C(n,k, j,\Lambda)$ such that
\begin{equation}\label{eq3.50}
\int_{Q_2 ^+}|\partial_t^{j+1}u|^2 \,{\mu}(dz) + \int_{Q_2 ^+} |DD_{x'}^k \partial_t^ju|^2 \,{\mu}(dz) \le C\int_{Q_3^+} \Big[ {|Du|^2 + \lambda |u|^2} \Big]\,{\mu}(dz).
\end{equation}
\end{lemma}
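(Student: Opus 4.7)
The plan is to use the standard energy method with carefully chosen cut-off test functions in the Steklov-averaged weak formulation, exploiting the conormal boundary condition on $\{x_n=0\}$ to eliminate the boundary flux term. Since the coefficients $a_0$, $\bar{\A}$, and the weight $\mu$ all depend only on $x_n$, the equation and the conormal condition are invariant under translations in $(t,x')$, so derivatives $\partial_t^{j}D_{x'}^{k}u$ solve \eqref{eq3.32} as well. The higher-order estimate \eqref{eq3.50} will then follow by iterating \eqref{eq3.27} on a chain of shrinking cylinders.

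For \eqref{eq3.27}, I fix a spatial cutoff $\eta\in C_0^\infty(D_{5/2})$ with $\eta\equiv 1$ on $D_2$ and $|D\eta|\le C$, and a time cutoff $\sigma\in C^\infty(\bR)$ with $\sigma\equiv 1$ on $[-4,\infty)$, $\sigma\equiv 0$ on $(-\infty,-9]$, and $|\sigma'|\le C$. Following the Steklov-averaged formulation (compare the derivation of \eqref{Steklov-weak-formula}), I test with $\phi(x)=[u]_h(\tau^\ast,x)\,\eta^2(x)$, integrate over $t\in(-\infty,\tau^\ast)$ for $\tau^\ast\in[-4,0]$, and let $h\to 0^+$. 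The time derivative term becomes
\[
\tfrac12\int_{D^+_{5/2}}a_0\mu\, u^2(\tau^\ast,\cdot)\eta^2\sigma^2(\tau^\ast)\,dx
-\int_{-\infty}^{\tau^\ast}\!\!\int_{D^+_{5/2}}a_0\mu\,u^2\eta^2\sigma\sigma'\,dz,
\]
the divergence term, after integration by parts in $x$, produces the coercive piece $\int \mu\langle\bar{\A}\nabla u,\nabla u\rangle\eta^2\sigma^2\,dz\ge\Lambda^{-1}\int \mu|\nabla u|^2\eta^2\sigma^2\,dz$ plus a cross-term with $\nabla\eta$ controlled by Young's inequality, and crucially the boundary integral on $\{x_n=0\}$ of $\langle\mu\bar{\A}\nabla u,\mathbf{e}_n\rangle[u]_h\eta^2$ vanishes by the conormal condition in \eqref{eq3.32}; the rest of $\partial D^+_3$ contributes nothing since $\eta$ has compact support in $D_3$. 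The $\lambda\mu u^2\eta^2\sigma^2$ term is kept on the coercive side. Absorbing the cross term, taking the supremum over $\tau^\ast\in[-4,0]$, and using $\eta\equiv 1$ on $D_2$ together with $|\sigma'|\le C$ yields \eqref{eq3.27}.

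For \eqref{eq3.50}, I argue by induction on $k+j$, applying \eqref{eq3.27} (with a suitably rescaled cutoff) on a finite chain of shrinking cylinders $Q_3^+\supset Q_{r_0}^+\supset Q_{r_1}^+\supset\cdots\supset Q_2^+$ to $v_{k',j'}=D_{x'}^{k'}\partial_t^{j'}u$ for $0\le k'\le k$ and $0\le j'\le j$; each $v_{k',j'}$ solves \eqref{eq3.32} with the same conormal condition because the coefficients do not depend on $(t,x')$, and differentiation commutes with both the divergence form operator and the boundary trace. This iteration controls $\int_{Q_2^+}|DD_{x'}^{k}\partial_t^{j}u|^2\mu\,dz$ by $\int_{Q_3^+}\bigl(|Du|^2+\lambda|u|^2\bigr)\mu\,dz$, since the base step of the iteration transfers the right-hand side $\int u^2\mu$ of \eqref{eq3.27} into the $\int|Du|^2+\lambda|u|^2$ term on a slightly larger cylinder via the left-hand side of \eqref{eq3.27} itself. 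To handle the pure time derivative $\partial_t^{j+1}u$, I test the equation satisfied by $v=\partial_t^{j}u$ with $\phi=\partial_t v\,\eta^2$: the $\langle\mu\bar{\A}\nabla v,\nabla\partial_t v\rangle\eta^2$ piece is split via the symmetric/antisymmetric decomposition of $\bar{\A}$, the symmetric part becoming a time derivative of the Dirichlet energy and integrated by parts in $t$ against a cutoff, while the antisymmetric part and the $\nabla\eta$ cross-term are absorbed by Young's inequality using $\epsilon\int\mu(\partial_t v)^2\eta^2+C(\epsilon)\int\mu|\nabla v|^2$; the $\nabla v$ terms are then controlled by the already-proved gradient estimate on a slightly larger cylinder.

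The main technical nuisance is the $\partial_t^{j+1}u$ bound, because the simple Caccioppoli iteration only yields gradient bounds and we must re-enter the equation. Beyond that step, everything reduces to the base identity \eqref{eq3.27} applied to $\partial_t^{j'}D_{x'}^{k'}u$, and no further properties of $\mu$ beyond local integrability are required since the constants absorb into $C(n,k,j,\Lambda)$ through the fixed choice of cutoffs.
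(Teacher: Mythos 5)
Your energy estimate \eqref{eq3.27} and the tangential-derivative chain handling the $\int|DD_{x'}^k\partial_t^j u|^2\,\mu(dz)$ part of \eqref{eq3.50} are sound and in line with what the paper does. The gap is in the pure time-derivative bound $\int_{Q_2^+}|\partial_t^{j+1}u|^2\,\mu(dz)$, specifically your treatment of the cross term $\int\mu\,\bar{\A}_{ij}D_j v\,D_i\partial_t v\,\eta^2$ after testing with $\phi=\partial_t v\,\eta^2$.

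Splitting $\bar{\A}$ into symmetric and antisymmetric parts, the symmetric half does become a total $t$-derivative of the Dirichlet energy and can be shifted onto the time cutoff. But the antisymmetric piece $\int\mu\,\bar{\A}^{\mathrm{anti}}_{ij}D_j v\,D_i\partial_t v\,\eta^2$ still contains the second-order derivative $D_i\partial_t v$, and Young's inequality applied to it produces $\epsilon\int\mu|D\partial_t v|^2\eta^2$, not the $\epsilon\int\mu(\partial_t v)^2\eta^2$ you propose to absorb; the left-hand side of your energy identity is coercive in $\partial_t v$ only, so the $|D\partial_t v|^2$ term cannot be swallowed there. Integrating by parts in $x_i$ to remove the derivative is also blocked, because $\mu$ is merely locally integrable and $\bar{\A}$ is merely measurable in $x_n$, so $D_i(\mu\bar{\A}^{\mathrm{anti}}_{ij}\eta^2)$ is not available. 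Nor does the antisymmetric term vanish or reduce: the identity $\bar{\A}^{\mathrm{anti}}_{ij}D_j v\,D_i v\equiv0$ gives only $\bar{\A}^{\mathrm{anti}}_{ij}D_j v\,D_i\partial_t v=-\bar{\A}^{\mathrm{anti}}_{ij}D_j\partial_t v\,D_i v$, which is a tautology once the indices are relabeled.

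The paper avoids this by applying Young's inequality directly to $\bar{\A}_{ij}D_j u\,D_i u_t$ without the split, accepting the term $C\varepsilon(R-r)^2\int_{Q_R^+}|Du_t|^2\,\mu(dz)$, and then invoking the Caccioppoli estimate for $u_t$ (which solves the same equation with the same conormal condition) to convert $(R-r)^2\int|Du_t|^2\,\mu(dz)$ into $C\int|u_t|^2\,\mu(dz)$ on a slightly larger cylinder. This yields
\begin{equation*}
\int_{Q_r^+}u_t^2\,\mu(dz)\le C\varepsilon^{-1}(R-r)^{-2}\int_{Q_R^+}|Du|^2\,\mu(dz)+C\varepsilon\int_{Q_R^+}|u_t|^2\,\mu(dz)+\lambda(R-r)^{-2}\int_{Q_R^+}u^2\,\mu(dz),
\end{equation*}
which is closed by a standard iteration on the radius to absorb the $\varepsilon\int u_t^2\,\mu(dz)$ term. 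You should replace your symmetric/antisymmetric argument with this self-referencing iteration (or an equivalent device) to make the $\partial_t^{j+1}u$ estimate rigorous.
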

\begin{proof}
Let $0 < r <R <1$, and let $z_0=(t_0,x_0)\in \overline{Q_2^+}$, and $\eta$ be a smooth function satisfying $\eta=1$ in $Q_r(z_0)$, $\eta=0$ outside $(t_0-R^2,t_0+R^2)\times D_R(x_0)$,  and
$$
|D\eta|\le C/|R-r|,\quad |\eta_t|\le C/|R-r|^2.
$$
By using the Steklov's average, we can formally multiply the equation by $\eta^2 u$ and integrate in $Q^+_R(z_0)$ to get the energy inequality
\begin{equation}    \label{eq3.27-1}
\begin{split}
& \sup_{t\in [t_0-r^2,t_0]}\int_{D_r^+(x_0)} u^2(t,x) \,\mu(dx)
+\int_{Q_r^+(z_0)}\Big[ |Du|^2 + {\lambda |u|^2}\Big]\,\mu(dz) \\
& \le C(R-r)^{-2}\int_{Q_R^+(z_0)} |u|^2 \,\mu(dz),
\end{split}
\end{equation}
which implies \eqref{eq3.27}.  By using the difference quotient method, we can see that $D_{x'}^k u$ satisfies the same equation with the same boundary condition as $u$. By induction, we deduce from \eqref{eq3.27-1} that for any $k\ge 1$,
\begin{equation} \label{eq3.30-1}
\begin{split}
& \sup_{t\in [t_0-r^2,t_0]}\int_{D_r^+(x_0)}|D_{x'}^ku|^2 \,\mu(dx)
+\int_{Q_r^+(z_0)}\Big[ |DD_{x'}^ku|^2 +{\lambda |D_{x'}^ku|^2}\Big] \,\mu(dz) \\
& \le C\int_{Q_R^+(z_0)}|Du|^2 \,\mu(dz).
\end{split}
\end{equation}
Next, we estimate time derivatives of $u$. Again, by using difference quotient method and Steklov's average, we can formally multiply \eqref{eq3.32} by $\eta^2 u_t$ and integrate in $Q^+_R(z_0)$ to get
\[
\begin{split}
& \int_{Q_R^+(z_0)}a_0(x_n) u_t^2\eta^2 \,\mu(dz) + \lambda \int_{Q_R^+(z_0)}  u u_t \eta^2 \mu(dz) \\
& \quad +\int_{Q_R^+(z_0)}  \bar{\A}_{ij}(x_n)D_ju (2u_t\eta D_i\eta+D_iu_t\eta^2) \,\mu(dz) =0.
\end{split}
\]
By Young's inequality, for any $\varepsilon\in (0,1)$, we have
\begin{equation*}
\begin{split}
& \int_{Q_R^+(z_0)}  u_t^2\eta^2 \,\mu(dz)\\ 
& \le C\varepsilon^{-1}(R-r)^{-2}\int_{Q_R^+(z_0)} |Du|^2 \,\mu(dz)+ C\varepsilon(R-r)^{2}\int_{Q_R^+(z_0)}|Du_t|^2 \,\mu(dz) \\
& \quad +  \lambda (R-r)^{-2} \int_{Q_R^+(z_0)} u^2 \,\mu(dz).
\end{split}
\end{equation*}
Also, since $u_t$ satisfies the same equation with the same boundary condition as $u$, by \eqref{eq3.27-1} with $u_t$ in place of $u$ and with a slightly different cutoff function, we have
\[
\begin{split}
& \int_{Q_r^+(z_0)}u_t^2 \,\mu(dz) \le C\varepsilon^{-1}(R-r)^{-2}\int_{Q_R^+(z_0)} |Du|^2 \,\mu(dz)
+ C\varepsilon\int_{Q_R^+(z_0)}|u_t|^2 \,\mu(dz)  \\
& \quad + \lambda (R-r)^{-2} \int_{Q_R^+(z_0)} u^2 \,\mu(dz).
\end{split}
\]
This and a standard iteration argument give
\begin{equation}
                                        \label{eq3.45-1}
\int_{Q_r^+(z_0)}u_t^2 \,\mu(dz) \le C(R-r)^{-2} \int_{Q_R^+(z_0)}\Big[ |Du|^2  + {\lambda |u|^2} \Big]\,\mu(dz).
\end{equation}
Again, by the difference quotient method,  we can see that $\partial_t^j u$ satisfies the same equation as $u$. Therefore, by applying \eqref{eq3.30-1} and \eqref{eq3.45-1} repeatedly, we obtain for any integers $j,k\ge 0$,
\begin{equation}
                                        \label{eq3.50-1}
\int_{Q_r^+(z_0)}|DD_{x'}^k \partial_t^ju|^2 \,\mu(dz) \le C\int_{Q_R^+(z_0)} \Big[ |Du|^2 + \lambda |u|^2 \Big]\,\mu(dz) ,
\end{equation}
where $C>0$ depends only on $n$, $j$, $k$, $r$, and $R$. For $j\ge 1$, we also have
$$
\int_{Q_r^+(z_0)} |\partial_t^ju|^2 \,\mu(dz) \le C\int_{Q_R^+(z_0)}  \Big[ |Du|^2+\lambda |u|^2\Big] \,\mu(dz).
$$
The estimates \eqref{eq3.45-1}, \eqref{eq3.50-1}, and the last estimates imply \eqref{eq3.50}.  The proof of the lemma is complete.
\end{proof}

The following boundary Lipschitz estimate is similar to Proposition \ref{inter-Lipschitz-est}.
\begin{proposition} \label{B-Lipschitz-est} For each $\Lambda >0, M_0 \geq 1$ and $K_0\geq 1$, there exists a constant $C = C(n,\Lambda, M_0,  K_0)$ such that the following statement holds true. Suppose that $\mu \in A_2$ satisfying $[\mu]_{A_2} \leq M_0$ and \eqref{eq11.42c}. 
Then, for $\lambda \geq 0$, any weak solution $u$ of \eqref{eq3.32} satisfies the following estimates
\begin{align}
                        \label{eq11.43}
\|u\|_{L^\infty(Q_1^+)} &\leq C(n,\Lambda, M_0) \left( \fint_{Q_3^+} u^2 \,{\mu} (dz) \right)^{1/2},\\
                        \label{eq3.18}
\| u_t \|_{L^\infty(Q_1^+)}  + \| D u\|_{L^\infty(Q_2^+)} &\leq C(n,\Lambda, M_0, K_0) \left( \fint_{Q_3^+} \Big[ |Du|^2 + \lambda |u|^2\Big] \,{\mu} (dz) \right)^{1/2}.
\end{align}
\end{proposition}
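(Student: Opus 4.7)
The plan is to follow the argument of Proposition \ref{inter-Lipschitz-est} almost verbatim, with the interior pieces replaced by their half-space analogues and the auxiliary pointwise assumption \eqref{extra-radius-1} replaced by the near-boundary $A_1$-type condition \eqref{eq11.42c} (used through \eqref{eq11.42b}). The proof splits into two steps, and we may assume throughout that $\hat z = 0$.

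\textbf{Step I.} We first prove \eqref{eq11.43} together with the $L^\infty$ bounds for $\partial_t u$ and $D_{x'} u$ that appear in \eqref{eq3.18}. Since the coefficients $a_0, \bar A$, and the weight $\mu$ all depend only on $x_n$, the difference quotient method shows that each $\partial_t^j D_{x'}^\alpha u$ is itself a weak solution of \eqref{eq3.32} with the same conormal boundary condition. Iterating the Caccioppoli-type inequalities of Lemma \ref{B-Cacioppolli} then yields $L^2(Q_2^+, \mu)$-control of all such derivatives, with the correct right-hand side appearing in \eqref{eq3.18}. Standard parabolic Sobolev embedding in the $(t, x')$-variables produces, for each fixed $x_n$ and sufficiently large even $k$,
\[
\sup_{(t, x') \in Q_1'} |u(t, x', x_n)|^2 \leq C \sum_{2j + |\alpha| \leq k} \int_{Q_2'} |\partial_t^j D_{x'}^\alpha u(t, x', x_n)|^2 \, dt \, dx',
\]
and similarly for $\partial_t u$ and $D_{x'} u$. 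The slice-wise $L^\infty$ bounds are upgraded to a full $L^\infty$ bound via a Moser-type iteration based on the weighted parabolic embedding of Lemma \ref{weighted-Sobolev-imbed} combined with Lemma \ref{B-Cacioppolli}; because only $[\mu]_{A_2} \leq M_0$ is needed here, the resulting constant depends only on $n, \Lambda, M_0$, matching \eqref{eq11.43}.

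\textbf{Step II.} To estimate $D_n u$, set $U(t, x) = \sum_{j=1}^n \bar A_{nj}(x_n) D_j u(t, x)$. Isolating the $n$-derivative in \eqref{eq3.32} yields
\[
D_n(\mu(x_n) U) = \mu(x_n)\Big[a_0(x_n) u_t + \lambda u - \sum_{i=1}^{n-1} \sum_{j=1}^{n} \bar A_{ij}(x_n) D_{ij} u\Big],
\]
a right-hand side free of $D_{nn} u$ and controllable by the Step I bounds on $u, u_t$, and $DD_{x'} u$. Crucially, the conormal boundary condition forces $\lim_{x_n \to 0^+} \mu(x_n) U = 0$, so for any $(t, x', x_{n0}) \in Q_1^+$ we may integrate from $0$ and apply H\"older's inequality to get
\[
|\mu(x_{n0}) U(t, x', x_{n0})|^2 \leq \Big(\int_0^{x_{n0}} \mu^{-1}(s) |D_n(\mu U)(t, x', s)|^2\, ds\Big)\Big(\int_0^{x_{n0}} \mu(s)\, ds\Big).
\]
Using the equation to rewrite $\mu^{-1}|D_n(\mu U)|^2 = \mu \cdot [\cdots]^2$ and taking supremum in $(t, x') \in Q_1'$ via the slice-wise Sobolev embedding of Step I, the first factor is bounded by $C \int_{Q_3^+} [|Du|^2 + \lambda u^2]\,\mu(dz)$. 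For the second factor, the estimate \eqref{eq11.42b} derived from $[\mu]_{A_2} \leq M_0$ and \eqref{eq11.42c} gives
\[
\int_0^{x_{n0}} \mu(s)\, ds \leq \frac{CM_0\, \mu^2(x_{n0})}{\int_0^3 \mu(s)\, ds}.
\]
Dividing by $\mu^2(x_{n0})$ then removes the weight, producing $|U(t, x', x_{n0})|^2 \leq C \fint_{Q_3^+}[|Du|^2 + \lambda u^2]\,\mu(dz)$. Inverting $U = \sum \bar A_{nj} D_j u$ using uniform ellipticity, combined with the Step I bound on $D_{x'} u$, yields \eqref{eq3.18}.

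\textbf{Main obstacle.} The delicate point is the passage in Step II from the weighted $L^2$-bound on $D_n(\mu U)$ in $x_n$ to a pointwise bound on $U$ with the correct weight scaling. Two ingredients work in concert: the conormal boundary condition kills the boundary term at $x_n = 0$ and enables integration from the boundary rather than across an interior ball, while the near-boundary $A_1$-type condition \eqref{eq11.42c}, through \eqref{eq11.42b}, precisely cancels the weight degeneracy $\mu(x_{n0})$ after division, playing exactly the role that \eqref{extra-radius-1} played in the interior case.
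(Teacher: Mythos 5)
Your Step~II is essentially the paper's argument and is correct: you set $U=\sum_{j}\bar{\A}_{nj}(x_n)D_j u$, observe that $D_n(\mu U)$ involves no $D_{nn}u$, use the conormal boundary condition to enforce $(\mu U)|_{x_n=0}=0$ so that you can integrate from $0$, apply Cauchy--Schwarz, and invoke \eqref{eq11.42b} to cancel the factor $\mu(x_{n0})$. Your ``main obstacle'' paragraph also correctly identifies the role of the boundary condition and of \eqref{eq11.42c}/\eqref{eq11.42b}.

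Step~I, however, has a genuine gap in the final upgrading step, and this is the one place the paper does not let you lean on the interior case (the interior proof of \eqref{L-infty-u} explicitly defers to the boundary proof of \eqref{eq11.43}, so you must actually carry it out here). After stating the slice-wise Sobolev bound in $(t,x')$ for fixed $x_n$, you propose to pass to a full $L^\infty$ bound ``via a Moser-type iteration.'' That is not what the paper does, and it is also logically misplaced: Moser iteration is a stand-alone $L^2\to L^\infty$ device, not a mechanism for upgrading a slice-wise pointwise bound. What the paper actually does is anisotropic and elementary: apply the slice-wise Sobolev embedding to \emph{both} $u$ and $D_n u$ (you omit $D_n u$ from Step~I), multiply those slice-wise bounds by $\mu(x_n)$ and integrate in $x_n$ using Lemma~\ref{B-Cacioppolli} to obtain
\[
\int_0^1 \mu(x_n)\big(|u(z',x_n)|^2+|D_n u(z',x_n)|^2\big)\,dx_n \le C\int_{Q_3^+}u^2\,\mu(dz),
\]
uniformly in $z'\in Q_1'$; then Cauchy--Schwarz together with the $A_2$ bound on $\int_0^1\mu^{-1}(x_n)\,dx_n$ gives an \emph{unweighted} $L^1(0,1)$ bound for $|u(z',\cdot)|+|D_n u(z',\cdot)|$, and finally $W^{1,1}(0,1)\hookrightarrow L^\infty(0,1)$ yields the pointwise bound over $Q_1^+$. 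You have most of the ingredients but not this chain, and replacing it with Moser iteration would force you to run the full De~Giorgi/Moser scheme for $A_2$-weighted degenerate parabolic equations from scratch (a much heavier independent argument that would not even use the slice-wise bound you set up). Note also that Step~II silently relies on the slice-wise Sobolev embedding from Step~I, so once Step~I is fixed as above, the rest of your proof goes through.
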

\begin{proof} The proof is divided into two steps. In the first step, we prove the $L^\infty$-estimates for $u, \partial_t u$ and $D_{x'}u$. In the second step, we establish the $L^\infty$-estimate of $D_{x_n}u$. \\ \noindent
{\bf Step I}:  In this first step, we prove \eqref{eq11.43}.
From \eqref{eq11.43}, the energy estimates in Lemma \ref{B-Cacioppolli},  and the fact that $D_{x'}u$ and $u_t$ satisfy the same equation as $u$, it follows immediately  that
\begin{equation} \label{eq4.29}
\begin{split}
 & \norm{D_{x'} u}_{L^\infty(Q_1^+)} \leq C \left (\fint_{Q_3^+} |D_{x'} u|^2 \,{\mu}(dz) \right)^{1/2}, \\
 & \norm{u_t}_{L^\infty(Q_1^+)} \leq C \left (\fint_{Q_3^+}\Big[|Du|^2 +\lambda |u|^2\Big] \,{\mu}(dz) \right)^{1/2}.
 \end{split}
\end{equation}
The proof of \eqref{eq11.43} relies on an anisotropic weighted Sobolev inequality. By applying the standard Sobolev embedding theorem in $z':=(t,x')$, for fixed $z=(z',x_n)\in Q_{1}^+$,
we have
$$
|D_nu(z',x_n)|\le C\|D_nu(\cdot, x_n)\|_{W^{k/2,k}_2(Q_1')},
\quad |u(z',x_n)|\le C\|u(\cdot, x_n)\|_{W^{k/2,k}_2(Q_1')}
$$
with an even integer $k\ge (n+1)/2$. Thus, by using \eqref{eq3.27} and \eqref{eq3.50} with slight modification, for any fixed $z'\in Q_{1}'$,
\begin{align}  \label{eq4.16}
&\int_{0}^{1} \mu(x_n) \big(|D_n u(z',x_n)|^2+|u(z',x_n)|^2\big)\, dx_n\nonumber\\
&\le C \int_{0}^{1} \mu(x_n) \big(\|D_n u(\cdot, x_n)\|^2_{W^{k/2,k}_2(Q_1')}+\|u(\cdot, x_n)\|^2_{W^{k/2,k}_2(Q_1')}\big)\,dx_n\nonumber\\
&\le C \int_{Q_3^+} \mu(x_n) u^2\, dz.
\end{align}
Thanks to \eqref{eq11.42}, by H\"older's inequality and \eqref{eq4.16},
\begin{align*}
&\left(\int_{0}^{1} |D_n u(z',x_n)|+|u(z',x_n)|\, dx_n\right)^2\\
&\le \int_{0}^{1} \mu(x_n) \big(|D_n u(z',x_n)|^2+|u(z',x_n)|^2\big)\, dx_n\cdot
\int_{0}^{1} \mu(x_n)^{-1}\,dx_n\\
&\le C \int_{Q_3^+} \mu(x_n) u^2\, dz\cdot
\left(\int_{Q_3^+} \mu(x_n)\, dz\right)^{-1},
\end{align*}
which implies \eqref{eq11.43}.\\
\noindent
{\bf Step II}: To complete the proof of the proposition, it remains to prove that there exists a constant $C = C(n,\Lambda, M_0, K_0)$  such that
\begin{equation}
            \label{eq10.32}
\norm{D_n u}_{L^\infty(Q_1^+)} \leq C \left (\fint_{Q_3^+} \Big[ |Du|^2 +  \lambda |u|^2 \Big] \,{\mu}(dz) \right)^{1/2}.
\end{equation}
Recall \eqref{eq10.23} and \eqref{eq11.45-inter}.
Observe that, by the conormal boundary condition, $(\mu(x_n)U)|_{x_n=0}=0$. 
Moreover, by applying the Sobolev embedding theorem in $z'$, for fixed $z = (z', x_n)\in Q_{1}^+ = Q_1' \times (0,1)$,
we have
\begin{align*}
&|U(z',x_n)|+|u_t(z',x_n)|+|DD_{x'}u(z',x_n)|\\
&\le C\|U(\cdot, x_n)\|_{W^{k/2,k}_2(Q_1')}
+C\|u_t(\cdot, x_n)\|_{W^{k/2,k}_2(Q_1')}
+C\|DD_{x'} u(\cdot, x_n)\|_{W^{k/2,k}_2(Q_1')}
\end{align*}
with an even integer $k\ge (d+1)/2$. Thus by Lemma \ref{B-Cacioppolli}, for any fixed $z'\in Q_{1}'$,
\begin{align}  \label{eq9.36}
 &\int_{0}^{1} \mu(x_n) \big(|U(z',x_n)|^2+|u_t(z',x_n)|^2+|DD_{x'} u(z',x_n)|^2\big)\, dx_n\nonumber\\
&\le C \int_{0}^{1} \mu(x_n) \big(
\|U(\cdot, x_n)\|^2_{W^{k/2,k}_2(Q_1')}
+\|u_t(\cdot, x_n)\|^2_{W^{k/2,k}_2(Q_1')}
+\|DD_{x'} u(\cdot, x_n)\|^2_{W^{k/2,k}_2(Q_1')}
\big)\,dx_n\nonumber\\
&\le C \int_{Q_3^+} \Big[|Du|^2 + \lambda |u|^2\Big]\,{\mu}(dz).
\end{align}
From \eqref{eq11.45-inter}, \eqref{eq3.27}, and \eqref{eq9.36}, and the zero boundary condition, we have
\begin{align*}
|\mu(x_n) U(z)|&\le C\int_0^{x_n} \mu(s) \big(|u_t(z',s)| + {\lambda |u|}+|DD_{x'} u(z',s)|\big)\,ds\\
&\le C \left(\int_0^{1} \mu(s) \big(|u_t(z',s)| +  {\lambda |u|} +|DD_{x'} u(z',s)|\big)^2\,ds\right)^{\frac 1 2}
\left(\int_0^{x_n} \mu(s) \,ds\right)^{\frac 1 2}\\
&\le C\left(\fint_{Q_3^+} \Big[ |Du|^2  + {\lambda |u|^2}\Big] \,{\mu}(dz) \right)^{\frac 1 2}\left(\int_0^{x_n} \mu(s) \,ds\right)^{\frac 12}({\mu}(Q_3^+))^{1/2}.
\end{align*}
Therefore, using \eqref{eq11.42b} we get
\begin{equation*} 
|U(z)|\le  C\left(\fint_{Q_3^+}  \Big[ |Du|^2 + \lambda |u|^2 \Big]\,{\mu}(dz)\right)^{\frac 1 2}.
\end{equation*}
From this last estimate, \eqref{eq10.23}, and \eqref{eq4.29}, we obtain \eqref{eq10.32}. The proof is therefore complete.
\end{proof}
\section{Interior $W^{1,q}$-regularity theory and proof of Theorem \ref{interior-theorem}} \label{interior-Section}
In this section we provide the proof of Theorem \ref{interior-theorem}. To this end, we need to establish several decomposition estimates. As already mentioned, our approach is based on perturbation technique using equations with coefficients independent of $(t,x')$. 
We recall that $l_0$ is a number defined in Lemma \ref{weighted-Sobolev-imbed} which depends only on $M_0$ and $n$, and $l_0'$ is defined in \eqref{l-0-prime}.
\subsection{Interior solution decomposition and their estimates}
Let $z_0 = (t_0, x_0) \in \bR^{n+1}$ and $\rho >0$ be fixed. In this subsection  we investigate interior properties for weak solutions $u$ of the equation
\begin{equation} \label{Q-6.eqn-inter}
\mu(x_n) a_0(x_n)u_t - \textup{div}[\mu(x_n) (\A \nabla u - \F)]  = \mu(x_n)f \quad \text{in} \quad Q_{5\rho}(z_0).
\end{equation}

The main result of the section is the following proposition.
\begin{proposition} \label{inter-approxi-propos}
Let $\Lambda>0$, $K_0 \geq 1$, and $M_0 \geq 1$ be given.  Then, there exists $\kappa_0 = \kappa_0(n,\Lambda, M_0) >0$ such that the following statement holds true. Assume that $\mu \in A_2(\bR)$ with $[\mu]_{A_2} \leq M_0$, and $\A: Q_{5\rho}(z_0) \rightarrow \mathbb{R}^{n\times n}$. 
Assume also that
\begin{equation} \label{mu-rho}
\fint_{y -\rho}^{y +\rho} \mu(s)\, ds \leq K_0 \mu(y) \quad \text{for a.e.} \quad y \in (x_{n0} - \rho, x_{n0}  + \rho).
\end{equation}
Suppose that $\F :Q_{5\rho}(z_0) \rightarrow \bR^n$ and $f: Q_{5\rho}(z_0) \rightarrow \bR$ satisfy $|\F| \in L^2(Q_{5\rho}(z_0), {\mu}), |f| \in L^{2l_0'}(Q_{5\rho}(z_0), {\mu})$. Then, for any weak solution $u \in V^{1,2}(Q_{5\rho}(z_0),\mu)$ of \eqref{Q-6.eqn-inter}, we can write
\[
u(t, x) = \tilde{u}(t, x) + w(t, x) \quad \text{in}\,\, Q_{3\rho}(z_0),
\]
where $\tilde{u}$ and $w$ are functions in $V^{1,2}(Q_{3\rho}(z_0),\mu)$ and they satisfy the following estimates
\begin{align} \label{u-tilde-est-inter}
& \sup_{t \in \Gamma_{3\rho}(t_0)} \rho^{-2} \fint_{D_{3\rho}(x_0)} |\tilde{u} |^2 \,\mu(dx) + \fint_{Q_{3\rho}(z_0)} |D \tilde{u}|^2 \,{\mu}(dz)
\leq C(n,\Lambda, M_0)\Bigg[ \fint_{Q_{4\rho}(z_0)} |\F|^2 \,{\mu}(dz)\nonumber\\
&  \quad + \rho^2 \left( \fint_{Q_{4\rho}(z_0)} |f|^{2l_0'} \,{\mu}(dz)  \right)^{1/l_0'} +[ \A^{\#}_{3\rho}(z_0)]^{\kappa_0} \fint_{Q_{4\rho}(z_0)} |D u|^2 \,{\mu}(dz) \Bigg]
\end{align}
and
\begin{align} \label{L-infty-w-inter}
&\|D w\|_{L^\infty(Q_{\rho}(z_0))}^{2}   \leq C(n,\Lambda, M_0, K_0) \left [  \fint_{Q_{4\rho}(z_0)} |Du|^2 \,{\mu}(dz) \right. \nonumber\\
&\quad  \left. \fint_{Q_{4\rho}(z_0)} |\F|^2 \,{\mu}(dz)  + \rho^2 \left( \fint_{Q_{4\rho}(z_0)} |f|^{2l_0'} \,{\mu}(dz)  \right)^{1/l_0'}  \right].
\end{align}
\end{proposition}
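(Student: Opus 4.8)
The plan is to split $u$ by solving an auxiliary homogeneous equation with the coefficient frozen in $(t,x')$, and to control the difference by the smallness of $\A^{\#}$ and the weighted reverse H\"older inequality. First I would set $\bar\A(x_n) = \bar\A_{Q'_{3\rho}(z'_0)}(x_n)$, the $(t,x')$-average of $\A$ over the base cylinder, which by \eqref{ellipticity} is still measurable, bounded, and uniformly elliptic in $x_n$ alone. Let $w \in V^{1,2}(Q_{3\rho}(z_0),\mu)$ be the weak solution of the homogeneous frozen-coefficient equation
\[
\mu(x_n)a_0(x_n)w_t - \dv[\mu(x_n)\bar\A(x_n)\nabla w] = 0 \quad\text{in } Q_{3\rho}(z_0),
\]
with $w = u$ on the parabolic boundary of $Q_{3\rho}(z_0)$ (existence and uniqueness in the weighted energy class is standard by Galerkin/Lax--Milgram in the weighted Sobolev space, using $\mu\in A_2$). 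Set $\tilde u = u - w$. Then $\tilde u$ vanishes on the parabolic boundary and is a weak solution of
\[
\mu(x_n)a_0(x_n)\tilde u_t - \dv[\mu(x_n)\bar\A(x_n)\nabla\tilde u] = \dv\!\big[\mu(x_n)\big((\A-\bar\A)\nabla u - \F\big)\big] + \mu(x_n)f \quad\text{in } Q_{3\rho}(z_0).
\]

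Next I would establish \eqref{u-tilde-est-inter}. Testing the equation for $\tilde u$ against $\tilde u$ itself (via the Steklov averages as in \eqref{Steklov-weak-formula}) and using ellipticity gives the energy bound
\[
\sup_t \rho^{-2}\fint_{D_{3\rho}}|\tilde u|^2\,\mu(dx) + \fint_{Q_{3\rho}}|D\tilde u|^2\,\mu(dz) \le C\Big[\fint_{Q_{3\rho}}\big(|(\A-\bar\A)\nabla u|^2 + |\F|^2\big)\mu(dz) + \rho^2\Big(\fint_{Q_{3\rho}}|f|^{2l_0'}\mu(dz)\Big)^{1/l_0'}\Big],
\]
where the $f$-term is handled exactly as in the proof of Lemma \ref{inter-energy}, splitting $1 = \frac1{2l_0}+\frac1{2l_0'}$ and invoking the weighted parabolic embedding Lemma \ref{weighted-Sobolev-imbed} together with Young's inequality to absorb the resulting $\|D\tilde u\|_2$ and $\sup_t\|\tilde u\|_2$ into the left side. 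The only genuinely new term is $\fint_{Q_{3\rho}}|(\A-\bar\A)\nabla u|^2\,\mu(dz)$. Here I would use H\"older's inequality with exponents $q/2$ and $(q/2)'$ for some $q = 2+\epsilon_0$ from Proposition \ref{reverse-holder-inter}: since $|\A-\bar\A|\le 2\Lambda$ and $\fint_{Q_{3\rho}}|\A-\bar\A|\,\mu(dz) \le C\A^{\#}_{3\rho}(z_0)$ (after passing through the $(t,x')$-average, using that $\bar\A_{Q'_{3\rho}}$ is the base-cylinder mean), one gets $\fint|\A-\bar\A|^{(q/2)'}\mu(dz) \le C[\A^{\#}_{3\rho}(z_0)]^{1/(q/2)'}$, while $\fint|\nabla u|^q\mu(dz)$ is controlled by the reverse H\"older inequality on the larger cylinder $Q_{4\rho}(z_0)$. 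Choosing $\kappa_0 = 1/(q/2)' = 1 - 2/q \in (0,1)$, and using the reverse H\"older estimate to bound $\big(\fint_{Q_{3\rho}}|Du|^q\mu(dz)\big)^{2/q}$ by $\fint_{Q_{4\rho}}|Du|^2\mu(dz) + \fint_{Q_{4\rho}}|\F|^2\mu(dz) + \rho^2(\fint_{Q_{4\rho}}|f|^{2l_0'}\mu(dz))^{1/l_0'}$ (after the same $\mathcal G_{Q_2}$-type bookkeeping as in Proposition \ref{reverse-holder-inter} to turn the $\mathcal G$-factor into a multiple of the $L^{l_0'q}$ average and hence of the $L^{2l_0'}$ average raised to $1/l_0'$), yields \eqref{u-tilde-est-inter}.

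For \eqref{L-infty-w-inter}, I would apply the interior Lipschitz estimate Proposition \ref{inter-Lipschitz-est} to $w$: after rescaling $Q_{3\rho}(z_0)$ to $Q_3$, $w$ solves exactly \eqref{inter-eq3.32} with $\lambda = 0$, the weight $\mu$ satisfies the required $A_2$ bound and, thanks to \eqref{mu-rho}, the one-unit $A_1$-type condition \eqref{extra-radius-1} at the rescaled center. Hence
\[
\|Dw\|_{L^\infty(Q_\rho(z_0))}^2 \le C(n,\Lambda,M_0,K_0)\,\fint_{Q_{3\rho}(z_0)}|Dw|^2\,\mu(dz) \le C\Big(\fint_{Q_{3\rho}(z_0)}|Du|^2\,\mu(dz) + \fint_{Q_{3\rho}(z_0)}|D\tilde u|^2\,\mu(dz)\Big),
\]
and the last term is absorbed by \eqref{u-tilde-est-inter} (bounding $[\A^{\#}_{3\rho}(z_0)]^{\kappa_0}\le (2\Lambda)^{\kappa_0}$ crudely, since no smallness is claimed in \eqref{L-infty-w-inter}), after enlarging $Q_{3\rho}$ to $Q_{4\rho}$, which gives \eqref{L-infty-w-inter}. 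The main obstacle is the first step: estimating the perturbation term $(\A-\bar\A)\nabla u$ requires genuinely better-than-$L^2$ integrability of $\nabla u$, so the argument is fundamentally dependent on having the weighted reverse H\"older inequality in hand with an explicit exponent $q > 2$, and on matching the exponent $\kappa_0$ coming from H\"older's inequality against what that reverse H\"older inequality delivers — together with correctly tracking the $\mathcal G_{Q_2}$-normalization so that the $f$-dependence lands in the stated $L^{2l_0'}$ form rather than an uncontrolled $L^{l_0'q}$ norm.
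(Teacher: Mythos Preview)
Your overall strategy---freeze $\A$ in $(t,x')$, control the perturbation term $(\A-\bar\A)\nabla u$ via H\"older and a reverse H\"older gain, then apply Proposition~\ref{inter-Lipschitz-est} to $w$---is the right one, and your handling of \eqref{L-infty-w-inter} is fine. But the step where you invoke Proposition~\ref{reverse-holder-inter} on $u$ itself has a genuine gap. That proposition, applied to $u$, produces on the right-hand side the quantities $\fint_{Q_{2r}}|\F|^q\,\mu(dz)$ and $\mathcal G^q\fint_{Q_{2r}}|f|^{l_0'q}\,\mu(dz)$ for some $q>2$. There is no ``$\mathcal G$-type bookkeeping'' that converts these into $\fint|\F|^2$ and $\big(\fint|f|^{2l_0'}\big)^{1/l_0'}$; the reverse H\"older inequality genuinely requires $\F\in L^q$ and $f\in L^{l_0'q}$, and the proposition only assumes $\F\in L^2$ and $f\in L^{2l_0'}$. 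With merely $\F\in L^2$, the right-hand side of the reverse H\"older bound may be $+\infty$ and the inequality is vacuous.

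The paper resolves this by inserting an intermediate comparison (Lemmas~\ref{inter-1} and~\ref{inter-2}): first solve for $v$ in $Q_{4\rho}(z_0)$ with the \emph{original} coefficients $\A$ but with $\F=0$, $f=0$, and $v=u$ on $\partial_p Q_{4\rho}(z_0)$. The energy estimate gives $\fint|D(u-v)|^2\le C\big[\fint|\F|^2+\rho^2(\fint|f|^{2l_0'})^{1/l_0'}\big]$ using only the $L^2$/$L^{2l_0'}$ data. Now apply the reverse H\"older inequality to $v$: since $v$ solves a homogeneous equation, the right-hand side contains only $\fint_{Q_{4\rho}}|Dv|^2$, which by the triangle inequality is bounded by $\fint|Du|^2$ plus the controlled difference. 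Only then do you freeze the coefficient and compare $v$ with $w$ in $Q_{3\rho}(z_0)$; the perturbation term is $(\A-\bar\A)\nabla v$, and your H\"older argument now goes through because you have a clean $L^\kappa$ bound on $Dv$ in terms of $L^2$ quantities. Setting $\tilde u=u-w=(u-v)+(v-w)$ and combining the two energy estimates gives \eqref{u-tilde-est-inter}. The point of the two-step decomposition is precisely to decouple the higher-integrability mechanism from the data $\F,f$.
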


The remaining part of the subsection is to prove Proposition \ref{inter-approxi-propos}. We divide the proof into two steps.  In the first step, we compare $u$ with the weak solution $v$ of the corresponding homogeneous equation
\begin{equation} \label{v-Q5-eqn-inter}
\left\{
\begin{array}{cccl}
\mu(x_n) a_0(x_n) v_t - \textup{div}[\mu(x_n) \A \nabla v ]  & = & 0 & \quad \text{in} \quad Q_{4\rho}(z_0), \\
v & = & u & \quad \text{on} \quad \partial_p Q_{4\rho(z_0)},
\end{array} \right.
\end{equation}
where $\partial_p Q_{4\rho(z_0)}$ denotes the parabolic boundary of $Q_{4\rho(z_0)}$. The result of the first step is stated in the following lemma.
\begin{lemma} \label{inter-1}  Let $\Lambda, M_0$ be positive numbers. Assume that $\mu \in A_2(\bR)$ with $[\mu]_{A_2} \leq M_0$, and $\A : Q_{5\rho}(z_0) \rightarrow \mathbb{R}^{n \times n}$. 
Then, for any $\F$ and $f$ such that $|\F| \in L^2(Q_{5\rho}(z_0), \mu)$ and $|f| \in L^{2l_0'}(Q_{5\rho}(z_0), \mu)$, and for any weak solution $u \in V^{1,2}(Q_{5\rho}(z_0),\mu)$ of \eqref{Q-6.eqn-inter}, there exists a weak solution $v \in  V^{1,2}(Q_{4\rho}(z_0),\mu)$ of  \eqref{v-Q5-eqn-inter} which satisfies
\begin{equation} \label{u-v-inter}
\begin{split}
& \sup_{t \in \Gamma_{4\rho}(t_0)} \rho^{-2} \fint_{D_{4\rho}(x_0)} |u -v|^2 \,\mu(dx) + \fint_{Q_{4\rho}(x_0)} |D( u - v)|^2 \,{\mu}(dz)   \\
& \leq C(n,\Lambda,M_0) \left[ \fint_{Q_{4\rho}(z_0)} |\F|^2 \,{\mu}(dz)  + \rho^2\left( \fint_{Q_{4\rho}(z_0)} |f|^{2l_0'} \,{\mu}(dz)  \right)^{1/l_0'}  \right].
\end{split}
\end{equation}
Moreover, there exists $\epsilon_0 >0$ depending only on $\Lambda, M_0$, and $n$ such that for any $\kappa \in [2,2+\epsilon_0]$,
\begin{equation} \label{v-reverse-inter}
\begin{split}
\left( \fint_{Q_{3\rho}(z_0)} |D v|^{\kappa} \,{\mu}(dz) \right)^{\frac{2}{\kappa}} & \leq C(n,\Lambda, M_0, \kappa)\left[\fint_{Q_{4\rho}(z_0)} | D u |^2 \,\mu(dx) \right. \\
& \quad \quad  + \left.  \fint_{Q_{4\rho}(z_0)} |\F|^2 \,{\mu}(dz)  + \rho^2\left( \fint_{Q_{4\rho}(z_0)} |f|^{2l_0'} \,{\mu}(dz)  \right)^{1/l_0'}  \right].
\end{split}
\end{equation}
\end{lemma}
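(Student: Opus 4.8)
The plan is to construct $v$ as the weak solution of the homogeneous Dirichlet problem \eqref{v-Q5-eqn-inter} and to estimate $u-v$ via an energy argument. First I would note that $v$ exists and is unique in $V^{1,2}(Q_{4\rho}(z_0),\mu)$ by the standard Galerkin/Lax–Milgram scheme for degenerate parabolic equations with $A_2$ weight (the bilinear form is coercive on the weighted space since $\mu\in A_2$ and $\A$ is uniformly elliptic, and the initial/lateral data $u\in V^{1,2}$ is admissible). Setting $\varphi=u-v\in V^{1,2}$, which vanishes on $\partial_p Q_{4\rho}(z_0)$, the difference satisfies
\[
\mu(x_n)a_0(x_n)\varphi_t-\dv[\mu(x_n)\A\nabla\varphi]=\dv[\mu(x_n)\F]+\mu(x_n)f
\quad\text{in }Q_{4\rho}(z_0).
\]
Testing (via Steklov averages, as in \eqref{Steklov-weak-formula}) with $\varphi$ itself and integrating in time, the term $\int\mu a_0\varphi\varphi_t$ produces $\tfrac12\sup_\tau\int\mu a_0\varphi^2(\tau)$ (there is no boundary contribution in $t$ since $\varphi$ vanishes at the initial time), the ellipticity of $\A$ gives $\Lambda^{-1}\int_{Q_{4\rho}}|D\varphi|^2\,\mu(dz)$ on the left, and on the right one has $\int\mu\langle\F,\nabla\varphi\rangle$ and $\int\mu f\varphi$. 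The first is absorbed by Young's inequality into $|D\varphi|^2$ plus $|\F|^2$; the second is handled exactly as the $|f||\hat u|$ term in Lemma \ref{inter-energy}: by the splitting $1=\tfrac{1}{2l_0}+\tfrac{1}{2l_0'}$ from \eqref{l-0-prime}, Hölder's inequality, and the weighted parabolic embedding Lemma \ref{weighted-Sobolev-imbed} applied to $\varphi$ (which is legitimate since $\varphi\in V^{1,2}(Q_{4\rho}(z_0),\mu)$ with zero lateral data, so no cutoff is needed), one gets
\[
\fint_{Q_{4\rho}}|f||\varphi|\,\mu(dz)
\le \epsilon\Big[\rho^{-2}\sup_\tau\fint_{D_{4\rho}}|\varphi|^2\,\mu(dx)+\fint_{Q_{4\rho}}|D\varphi|^2\,\mu(dz)\Big]
+C(\epsilon)\rho^2\Big(\fint_{Q_{4\rho}}|f|^{2l_0'}\,\mu(dz)\Big)^{1/l_0'}.
\]
Choosing $\epsilon$ small and absorbing yields \eqref{u-v-inter}.

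For \eqref{v-reverse-inter}, the point is that $v$ solves a homogeneous equation of the form \eqref{Q-2-bdr.eqn}-type (more precisely the interior equation \eqref{Q-2-d.eqn} with $\F=0$, $f=0$, after rescaling $Q_{4\rho}(z_0)$ to the unit scale), so the interior weighted reverse Hölder inequality Proposition \ref{reverse-holder-inter} applies: there is $\epsilon_0=\epsilon_0(n,\Lambda,M_0)$ so that for $\kappa\in[2,2+\epsilon_0]$,
\[
\Big(\fint_{Q_{3\rho}(z_0)}|Dv|^\kappa\,\mu(dz)\Big)^{2/\kappa}
\le C\fint_{Q_{4\rho}(z_0)}|Dv|^2\,\mu(dz).
\]
(One should check the hypotheses of Proposition \ref{reverse-holder-inter} on $l_0'$ are the standing assumptions here, and that rescaling preserves $[\mu]_{A_2}$ and the ellipticity constants — both are routine since the $A_2$ constant is dilation-invariant in the $x_n$ direction.) Finally, bound $\fint_{Q_{4\rho}}|Dv|^2\,\mu(dz)\le 2\fint_{Q_{4\rho}}|Du|^2\,\mu(dz)+2\fint_{Q_{4\rho}}|D(u-v)|^2\,\mu(dz)$ and insert \eqref{u-v-inter} to control the last term, which produces the right-hand side of \eqref{v-reverse-inter}.

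The main obstacle I anticipate is the careful treatment of the $\int\mu f\varphi$ term at the correct scaling — one must track the powers of $\rho$ so that the embedding constant from Lemma \ref{weighted-Sobolev-imbed} (which is stated at a fixed scale) transfers correctly to radius $4\rho$, and one must verify that the quantity $\rho^{-2}\sup_\tau\fint|\varphi|^2$ that the embedding produces is precisely the quantity being estimated on the left of \eqref{u-v-inter}, so that the absorption is consistent. A secondary technical point is justifying the Steklov-average computation for the difference $\varphi$ on a cylinder with Dirichlet lateral data rather than the cutoff setting of Lemma \ref{inter-energy}; this requires knowing that $v$ (hence $\varphi$) has enough regularity in $t$ for $\partial_t[\varphi]_h$ to make sense as an $L^2(\mu)$ function, which follows from the construction of $v$ and the energy estimate, but should be stated.
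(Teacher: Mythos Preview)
Your proposal is correct and follows essentially the same approach as the paper: the paper constructs $\hat v=u-v$ directly as the Galerkin solution of the inhomogeneous equation with zero parabolic boundary data (so that $v:=u-\hat v$ solves \eqref{v-Q5-eqn-inter}), then obtains \eqref{u-v-inter} from the energy estimate of Lemma~\ref{inter-energy} and \eqref{v-reverse-inter} from Proposition~\ref{reverse-holder-inter} applied to $v$ together with the triangle inequality. Your construction of $v$ first and $\varphi=u-v$ second is the same argument in reverse order, and your treatment of the $\int \mu f\varphi$ term and the reverse H\"older step match the paper's exactly.
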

\begin{proof}
Let $\hat{v}$ be a weak solution of
\begin{equation} \label{hat-v-Q5-eqn-inter}
\left\{
\begin{array}{cccl}
\mu(x_n) a_0(x_n) \hat{v}_t - \textup{div}[\mu(x_n) (\A \nabla \hat{v} - \F) ]   & = & \mu(x_n)f & \quad \text{in} \quad Q_{4\rho}(z_0) \\
\hat{v} & = & 0 & \quad \text{on} \quad \partial_p Q_{4\rho(z_0)}.
\end{array} \right.
\end{equation}
The existence of weak solution $\hat{v}$ of \eqref{hat-v-Q5-eqn-inter} can be obtained through the Galerkin approximation method. Then $v:=u-\hat v$ satisfies \eqref{v-Q5-eqn-inter}. The estimate \eqref{u-v-inter} follows from the standard energy estimates. See the proof of Lemma \ref{inter-energy}. On the other hand, the existence of $\epsilon_0$ and \eqref{v-reverse-inter} follows from Lemma \ref{reverse-holder-inter} applied to $v$ and the triangle inequality.
\end{proof}
In the next step, we compare the solution $v$ of \eqref{v-Q5-eqn-inter} with a solution $w$ of the following equation in which the coefficients are independent of $(t,x')$.
\begin{equation} \label{w-Q4-eqn-inter}
\left\{
\begin{array}{cccl}
\mu(x_n) a_0(x_n) w_t - \text{div}[\mu(x_n) \bar{\A}_{Q_{3\rho}'(z_0')}(x_n) \nabla w]  & = & 0 & \quad \text{in} \quad Q_{3\rho}(z_0), \\
w & = &v & \quad \text{on} \quad \partial_p Q_{3\rho}(z_0).
\end{array}
\right.
\end{equation}
\begin{lemma} \label{inter-2}  Let $\Lambda, M_0$ be positive numbers. Then, there exists $\kappa_0=\kappa_0(n,\Lambda,M_0) >0$ such that the following statement holds. Assume that $\mu \in A_2(\bR)$ with $[\mu]_{A_2} \leq M_0$, and $\A : Q_{5\rho}(z_0) \rightarrow \mathbb{R}^{n \times n}$. 
Then, for any $\F$ and $f$ such that $\F \in L^2(Q_{5\rho}(z_0), {\mu})$ and $f \in L^{2l_0'}(Q_{5\rho}(z_0), {\mu})$, and for any weak solution $u \in V^{1,2}(Q_{5\rho}(z_0),\mu)$ of \eqref{Q-6.eqn-inter}, there exists a weak solution $w \in V^{1,2}(Q_{3\rho}(z_0),\mu)$ of \eqref{w-Q4-eqn-inter}
satisfying
\[
\begin{split}
& \sup_{t \in \Gamma_{3\rho}(t_0)}\rho^{-2} \fint_{D_{3\rho}(x_0)} |v -w|^2 \,\mu(dx) + \int_{Q_{3\rho}(z_0)} |D( v -  w)|^2 \,{\mu}(dz)  \\
& \leq C(\Lambda, M_0, n)[\A^{\#}_{3\rho}(z_0)]^{\kappa_0}  \left[\fint_{Q_{4\rho}(z_0)} | D u |^2 {\mu}(dx) +  \fint_{Q_{4\rho}(z_0)} |\F|^2 \,{\mu}(dz)  \right. \\
& \quad \quad \left. + \rho^2\left( \fint_{Q_{4\rho}(z_0)} |f|^{2l_0'} \,{\mu}(dz)  \right)^{1/l_0'}  \right],
\end{split}
\]
where $v$ is defined in Lemma \ref{inter-1}.
\end{lemma}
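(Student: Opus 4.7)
The plan is to exhibit $w$ as the solution of the Dirichlet problem \eqref{w-Q4-eqn-inter} and then bound the difference $\phi := v - w$ by energy methods, pushing the factor $|\A-\bar\A|^2$ into a small power of the mean oscillation via the reverse Hölder improvement from Lemma \ref{inter-1}.

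First, I would construct $w \in V^{1,2}(Q_{3\rho}(z_0),\mu)$ by a standard Galerkin (or Lax–Milgram) argument in weighted spaces with boundary data $v \in V^{1,2}(Q_{3\rho}(z_0),\mu)$; the coefficient matrix $\bar{\A}_{Q_{3\rho}'(z_0')}(x_n)$ is still uniformly elliptic with constant $\Lambda$ because averaging preserves pointwise ellipticity, and $a_0\in[\Lambda^{-1},\Lambda]$ is unchanged. Subtracting the two equations, $\phi = v-w$ vanishes on the parabolic boundary $\partial_p Q_{3\rho}(z_0)$ and solves
\[
\mu(x_n) a_0(x_n)\phi_t - \operatorname{div}[\mu(x_n)\bar{\A}_{Q_{3\rho}'(z_0')}(x_n)\nabla \phi] = \operatorname{div}\bigl[\mu(x_n)(\A - \bar{\A}_{Q_{3\rho}'(z_0')})\nabla v\bigr]
\]
in $Q_{3\rho}(z_0)$.

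Next, I would test this equation with $\phi$ itself (using the Steklov averages exactly as in the proof of Lemma \ref{inter-energy}, where now no spatial cutoff is needed since $\phi$ already vanishes on $\partial_p Q_{3\rho}(z_0)$; the only localization is a smooth time cutoff). Ellipticity of $\bar{\A}$ and Cauchy–Schwarz give the weighted energy bound
\[
\sup_{t\in\Gamma_{3\rho}(t_0)}\rho^{-2}\fint_{D_{3\rho}(x_0)}|\phi|^2\,\mu(dx)
+\fint_{Q_{3\rho}(z_0)}|D\phi|^2\,\mu(dz)
\le C(\Lambda)\fint_{Q_{3\rho}(z_0)}|\A - \bar{\A}_{Q_{3\rho}'(z_0')}|^2|\nabla v|^2\,\mu(dz).
\]

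The core estimate is to absorb $|\A-\bar\A|^2$ into a small power of $\A^{\#}_{3\rho}(z_0)$. Fix $\kappa\in(2,2+\epsilon_0]$ as in Proposition \ref{reverse-holder-inter}, and set $q = \kappa/(\kappa-2)$, so $2/\kappa + 1/q = 1$. By Hölder's inequality,
\[
\fint_{Q_{3\rho}(z_0)}|\A-\bar\A|^2|\nabla v|^2\,\mu(dz)
\le \Bigl(\fint_{Q_{3\rho}(z_0)}|\nabla v|^\kappa\,\mu(dz)\Bigr)^{2/\kappa}\Bigl(\fint_{Q_{3\rho}(z_0)}|\A-\bar\A|^{2q}\,\mu(dz)\Bigr)^{1/q}.
\]
Since $|\A - \bar\A|\le 2\Lambda$ and $2q>1$, I would use $|\A-\bar\A|^{2q}\le (2\Lambda)^{2q-1}|\A-\bar\A|$ to obtain
\[
\Bigl(\fint_{Q_{3\rho}(z_0)}|\A-\bar\A|^{2q}\,\mu(dz)\Bigr)^{1/q}
\le C(\Lambda,\kappa)\bigl[\A^{\#}_{3\rho}(z_0)\bigr]^{\kappa_0},
\qquad \kappa_0 := \tfrac{\kappa-2}{\kappa}\in(0,1).
\]
Finally, applying the reverse Hölder estimate \eqref{v-reverse-inter} to replace $\bigl(\fint|\nabla v|^\kappa\,\mu\bigr)^{2/\kappa}$ by the desired $Q_{4\rho}(z_0)$-averages of $|Du|^2$, $|\F|^2$, and $\rho^2(\fint|f|^{2l_0'}\mu)^{1/l_0'}$ yields exactly the claim.

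The main technical point I expect to be delicate is justifying the energy identity for $\phi$ rigorously in the weighted, singular/degenerate setting, since $\phi$ is a priori only in $V^{1,2}(Q_{3\rho}(z_0),\mu)$. This is handled with the Steklov average machinery already used in Lemma \ref{inter-energy}, the only new input being that both source terms $\operatorname{div}[\mu(\A-\bar\A)\nabla v]$ and the test function $\phi$ are compatible with the weighted function space; the key nontrivial bookkeeping is that $(\A-\bar\A)\nabla v \in L^2(Q_{3\rho}(z_0),\mu)$, which follows from $|\A-\bar\A|\le 2\Lambda$ and $\nabla v\in L^2(Q_{3\rho}(z_0),\mu)$.
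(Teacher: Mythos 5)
Your proposal is correct and follows essentially the same route as the paper: the paper defines $\hat w = v - w$ and solves for $\hat w$ directly from the divergence-form source $\operatorname{div}[\mu G]$ with $G=(\A-\bar\A)\nabla v$, then applies H\"older with exponent $\kappa\in(2,2+\epsilon_0)$ and the boundedness $|\A-\bar\A|\le 2\Lambda$ to reduce to a power $\kappa_0=\tfrac{\kappa-2}{\kappa}$ of $\A^{\#}_{3\rho}(z_0)$, finally invoking \eqref{v-reverse-inter}. Your $\phi=v-w$ and exponent $q=\kappa/(\kappa-2)$ are exactly this argument in a slightly different order of construction, so the two proofs coincide in substance.
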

\begin{proof}
Let $\hat{w}$ be a weak solution of
\begin{equation} \label{hat-w-Q4-eqn-inter}
\left\{
\begin{array}{cccl}
\mu(x_n) a_0(x_n) \hat{w}_t - \text{div}[\mu(x_n) \bar{\A}_{Q_{3\rho}'(z_0')}(x_n) \nabla \hat{w}]  & = & \textup{div}[\mu(x_n)G] & \quad \text{in} \,\, Q_{3\rho}(z_0), \\
\hat{w} & = &0 & \quad \text{on} \,\, \partial_p Q_{3\rho}(z_0),
\end{array}
\right.
\end{equation}
where
\[
G(t,x) = (\A(t,x) - \bar{\A}_{Q_{3\rho}'(z_0')}(x_n)) \nabla v (t,x).
\]
Observe that $G \in L^2(Q_{3\rho}(z_0), {\mu})$. Indeed, let $\kappa \in (2, 2+\epsilon_0)$, where $\epsilon_0$ is defined in Lemma \ref{inter-1}, by the H\"{o}lder's inequality and the ellipticity condition \eqref{ellipticity} for $\A$ we see that
\begin{align} \label{G-inter-est}
\fint_{Q_{3\rho}(z_0)} |G|^2 \,{\mu}(dz) & = \fint_{Q_{3\rho}(z_0)}  |\A - \bar{\A}_{Q_{3\rho}'(z_0')}(x_n)|^2 |D v |^2 \,{\mu}(dz) \nonumber\\
& \leq \left(\fint_{Q_{3\rho}(z_0)} |\A -\bar{\A}_{Q_{3\rho}'(z_0)}|^{\frac{2\kappa}{\kappa-2}} \,{\mu}(dz) \right)^{\frac{\kappa-2}{\kappa}} \left(\fint_{Q_{3\rho}(z_0)} |D v|^{\kappa} \,{\mu}(dz) \right)^{\frac{2}{\kappa}} \nonumber\\
& \leq C(n,\Lambda, M_0)\left(\fint_{Q_{3\rho}(z_0)} |\A -\bar{\A}_{Q_{3\rho}'(z_0)}| \,{\mu}(dz) \right)^{\frac{\kappa-2}{\kappa}} \left[\fint_{Q_{4\rho}(z_0)} | D u |^2 \,{\mu}(dx) \right. \nonumber\\
& \quad \quad  + \left.  \fint_{Q_{4\rho}(z_0)} |\F|^2 \,{\mu}(dz)  + \rho^2\left( \fint_{Q_{4\rho}(z_0)} |f|^{2l_0'} \,{\mu}(dz)  \right)^{1/l_0'}  \right],
\end{align}
where in the last estimate we have used Lemma \ref{inter-1}. From this estimate, the existence of the weak solution $\hat{w}$ of \eqref{hat-w-Q4-eqn-inter} can be achieved by the Galerkin approximation method. Then $w:=v-\hat w$ satisfies \eqref{w-Q4-eqn-inter}. Moreover, it follows from the energy estimates, as in the proof of Lemma \ref{inter-energy}, that
\[
\sup_{\Gamma_{3\rho}(t_0)} \rho^{-2} \fint_{D_{3\rho}(x_0)} |\hat{w}|^2 \,\mu(dx) + \fint_{Q_{3\rho}(z_0)} |D \hat{w}|^2 \,{\mu}(dz)
\leq C(n, \Lambda) \fint_{Q_{3\rho}(z_0)} |G|^2 \,{\mu}(dz).
\]
From this and \eqref{G-inter-est}, the lemma follows.
\end{proof}
\begin{proof}[Proof of Proposition \ref{inter-approxi-propos}] Let us denote $\tilde{u} = u - w$, where $w$ is defined in Lemma \ref{inter-2}. Then, $ u = \tilde{u} + w$. Moreover,  from Lemmas \ref{inter-1} and \ref{inter-2}, the triangle inequality, and the doubling property of $\mu$, the estimate \eqref{u-tilde-est-inter} follows. Again by the triangle inequality and noting that $\A^\#_{3\rho}(z_0) \leq C(n,\Lambda)$, we also obtain
\[
\begin{split}
  \fint_{Q_{3\rho}(z_0)} |D w|^{2} \,{\mu}(dz) & \leq C(\Lambda, M_0, n) \left[\fint_{Q_{4\rho}(z_0)} | D u |^2 \,{\mu}(dz) \right. \\
& \quad \quad  + \left.  \fint_{Q_{4\rho}(z_0)} |\F|^2 \,{\mu}(dz)  + \rho^2\left( \fint_{Q_{4\rho}(z_0)} |f|^{2l_0'} \,{\mu}(dz)  \right)^{1/l_0'}  \right].
\end{split}
\]
From this, the condition \eqref{mu-rho}, and with some suitable scaling argument, \eqref{L-infty-w-inter} follows from Proposition \ref{inter-Lipschitz-est}. The proof is then complete.
\end{proof}
\begin{proof}[Proof of Theorem \ref{interior-theorem} ] Theorem \ref{interior-theorem} follows  from Proposition \ref{inter-approxi-propos} and level set estimates. The proof is exactly the same as that of Theorem \ref{bdr-reg-est} in the next section. We therefore skip it.
\end{proof}
\section{Boundary $W^{1,q}$-regularity theory and proof of Theorem \ref{bdr-reg-est}} \label{boundary-section}
This section gives the proof of Theorem \ref{bdr-reg-est}.  As in Section \ref{interior-Section}, we apply the freezing coefficient technique. We need to establish several results on solution decompositions near the boundary of the considered domains. Recall that $l_0$ is a number defined in Lemma \ref{weighted-Sobolev-imbed} which depends only on $M_0$, $n$, and $l_0'$ is defined in \eqref{l-0-prime}.

\subsection{Boundary solution decomposition and their estimates}
Let $\rho >0$ be fixed and we study the equation
\begin{equation} \label{B-Q-2.eqn}
\left\{
\begin{array}{cccl}
\mu(x_n)  a_0(x_n) u_t - \textup{div}[\mu(x_n) (\mathbf{A}  \nabla u - \F)] & = & \mu(x_n)f \\
\displaystyle{\lim_{x_n \rightarrow 0^+}\wei{\mu(x_n)(\A \nabla u - \F), \mathbf{e}_n}} & = &0
\end{array} \right.\quad  \text{in}\,\,   Q_{5\rho}^+.
\end{equation}
We denote $\A^\#_{3\rho}=\A^\#_{3\rho}(0)$.
The following proposition is the main result of this subsection.
\begin{proposition} \label{B-approxi-propos} Let $\Lambda>0, K_0 \geq 1, M_0 \geq 1$ be given.  Then, there exists $\kappa_0 = \kappa_0(n,\Lambda, M_0) >0$ such that the following statement holds true. Assume that $\mu \in A_2(\bR)$ with $[\mu]_{A_2} \leq M_0$ and \eqref{eq11.42c} holds. Assume also that $\A: Q_{5\rho}^+ \rightarrow \mathbb{R}^{n\times n}$. 
Suppose that $|\F| :Q_{5\rho}^+ \rightarrow \bR^n$ and $|f|: Q_{5\rho}^+ \rightarrow \bR$ satisfy $\F \in L^2(Q_{5\rho}^+, {\mu})$ and $f \in L^{2l_0'}(Q_{5\rho}^+, {\mu})$. Then, for any weak solution $u \in V^{1,2}(Q_{5\rho}^+,\mu)$ of \eqref{B-Q-2.eqn}, we can write
\[
u(t, x) = \tilde{u}(t, x) + w(t, x) \quad  \text{in}\,\, Q_{3\rho}^+,
\]
where $\tilde{u}$ and $w$ are functions in $V^{1,2}(Q_{3\rho}^+,\mu)$ and they satisfy the following estimates
\begin{equation*} 
\begin{split}
& \sup_{t \in \Gamma_{3\rho}} \rho^{-2} \fint_{D_{3\rho}^+} |\tilde{u} |^2 \,\mu(dx) + \fint_{Q_{3\rho}^+} |D \tilde{u}|^2 \,{\mu}(dz)  \\
& \leq C(n,\Lambda, M_0)\left[ \fint_{Q_{4\rho}^+} |\F|^2 \,{\mu}(dz)  + \rho^2 \left( \fint_{Q_{4\rho}^+} |f|^{2l_0'} \,{\mu}(dz)  \right)^{1/l_0'}  + [ \A^{\#}_{3\rho}]^{\kappa_0} \fint_{Q_{4\rho}^+} |D u|^2 \,{\mu}(dz) \right],
\end{split}
\end{equation*}
and
\begin{equation*} 
\begin{split}
&\|D w\|_{L^\infty(Q_{\rho}^+)}^{2} \\
& \leq C(n,\Lambda, M_0, K_0) \left[  \fint_{Q_{4\rho}^+} |D u|^2 \,{\mu}(dz)  + \fint_{Q_{4\rho}^+} |\F|^2 \,{\mu}(dz)  + \rho^2 \left( \fint_{Q_{4\rho}^+} |f|^{2l_0'} \,{\mu}(dz)  \right)^{1/l_0'}  \right].
\end{split}
\end{equation*}
\end{proposition}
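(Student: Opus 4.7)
The plan is to mirror the two-step perturbation scheme used for Proposition \ref{inter-approxi-propos}, adapted to the conormal boundary condition on $\{x_n=0\}$. First, solve
\begin{equation*}
\left\{
\begin{array}{l}
\mu(x_n)a_0(x_n)\hat v_t - \dv[\mu(x_n)(\A\nabla\hat v - \F)] = \mu(x_n)f \\
\displaystyle\lim_{x_n\to 0^+}\langle \mu(x_n)(\A\nabla\hat v-\F),\mathbf{e}_n\rangle = 0
\end{array}\right.
\end{equation*}
in $Q_{4\rho}^+$ with $\hat v = 0$ on the lateral/initial parts of $\partial_p Q_{4\rho}^+$, existence being via Galerkin as in Lemma \ref{inter-1}, and set $v := u - \hat v$. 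Then $v$ solves the homogeneous equation in $Q_{4\rho}^+$ with the conormal boundary condition, and Caccioppoli-type estimates (the same computation as in the proof of Lemma \ref{inter-energy}, now using the test function $\hat v\eta^2\sigma$ which vanishes only on the non-flat part of $\partial_p Q_{4\rho}^+$, so that the conormal boundary term is automatically killed) yield the analogue of \eqref{u-v-inter} for $u-v=\hat v$. Applying the weighted reverse H\"older estimate (Proposition \ref{reverse-holder-bdr}) to $v$ gives, for some $\kappa\in(2,2+\epsilon_0)$,
\begin{equation*}
\left(\fint_{Q_{3\rho}^+}|Dv|^\kappa\,\mu(dz)\right)^{2/\kappa}\le C\Big[\fint_{Q_{4\rho}^+}|Du|^2\mu(dz)+\fint_{Q_{4\rho}^+}|\F|^2\mu(dz)+\rho^2\Big(\fint_{Q_{4\rho}^+}|f|^{2l_0'}\mu(dz)\Big)^{1/l_0'}\Big].
\end{equation*}

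Second, freeze the $(t,x')$ variable: define $\bar\A(x_n) := \bar\A_{Q'_{3\rho}}(x_n)$ and let $w$ solve
\begin{equation*}
\left\{
\begin{array}{l}
\mu(x_n)a_0(x_n)w_t - \dv[\mu(x_n)\bar\A(x_n)\nabla w] = 0 \\
\displaystyle\lim_{x_n\to 0^+}\langle\mu(x_n)\bar\A(x_n)\nabla w,\mathbf{e}_n\rangle = 0
\end{array}\right.
\end{equation*}
in $Q_{3\rho}^+$ with $w = v$ on the non-flat part of $\partial_p Q_{3\rho}^+$. Equivalently $\hat w := v - w$ satisfies the corresponding inhomogeneous problem with source $\dv[\mu(x_n)G]$, $G := (\A-\bar\A)\nabla v$, and zero conormal condition. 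The key control comes from writing, by H\"older in the pair $(\kappa,\kappa/(\kappa-2))$,
\begin{equation*}
\fint_{Q_{3\rho}^+}|G|^2\mu(dz)\le\Big(\fint_{Q_{3\rho}^+}|\A-\bar\A|^{2\kappa/(\kappa-2)}\mu(dz)\Big)^{(\kappa-2)/\kappa}\Big(\fint_{Q_{3\rho}^+}|Dv|^\kappa\mu(dz)\Big)^{2/\kappa},
\end{equation*}
and then using $\|\A-\bar\A\|_{L^\infty}\le C(n,\Lambda)$ to reduce the first factor to $[\A^\#_{3\rho}]^{(\kappa-2)/\kappa}$, which is the source of the exponent $\kappa_0 := (\kappa-2)/\kappa$. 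Combining with the reverse H\"older bound for $Dv$ controls $\hat w$ in the energy norm.

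Setting $\tilde u := u - w = \hat v + \hat w$ and applying the triangle inequality together with the two energy estimates above yields the first conclusion of the proposition. For the $L^\infty$ gradient bound on $w$, rescale $w$ to $Q_3^+$ by $\tilde w(t,x) := w(\rho^2 t, \rho x)$, observing that (i) $[\mu]_{A_2}$ is scale-invariant, (ii) the hypothesis \eqref{eq11.42c} is preserved under the rescaling $\mu\mapsto\mu(\rho\,\cdot)$, and (iii) the coefficients of the rescaled equation depend only on $x_n$; then apply Proposition \ref{B-Lipschitz-est} (with $\lambda=0$) to $\tilde w$ and undo the scaling to obtain $\|Dw\|_{L^\infty(Q_\rho^+)}^2\le C\rho^{-2}\fint_{Q_{3\rho}^+}|Dw|^2\mu(dz)$. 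A final triangle inequality $\|Dw\|_{L^2}\le\|Du\|_{L^2}+\|D\tilde u\|_{L^2}$, combined with the already established bound on $\tilde u$ and the trivial bound $\A^\#_{3\rho}\le C(n,\Lambda)$ (so that this term absorbs into the data), produces the claimed estimate for $\|Dw\|_{L^\infty(Q_\rho^+)}^2$.

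The main obstacle I anticipate is the rescaling step for the Lipschitz estimate: one must verify that $w$ really satisfies an equation in the required form on $Q_{3\rho}^+$ with a rescaled weight that still satisfies both $[\mu]_{A_2}\le M_0$ and the one-sided doubling hypothesis \eqref{eq11.42c} so that Proposition \ref{B-Lipschitz-est} applies with constants independent of $\rho$; the invariance of \eqref{eq11.42c} under dilations $y\mapsto \rho y$ on the half-line, together with the scale-invariance of the $A_2$ constant, is what makes the argument go through with the same $K_0,M_0$.
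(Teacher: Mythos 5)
Your proposal mirrors the paper's two-step perturbation scheme essentially verbatim: solve for $v$ satisfying the homogeneous conormal problem in $Q_{4\rho}^+$ with the same lateral/initial data as $u$, estimate $u-v$ by energy methods and $Dv$ via the boundary reverse H\"older inequality (Proposition \ref{reverse-holder-bdr}); then freeze the coefficient in $(t,x')$ to get $w$, control $v-w$ through the source $G=(\A-\bar\A)\nabla v$ by the H\"older/reverse-H\"older trick that produces $\kappa_0=(\kappa-2)/\kappa$; and finally set $\tilde u=u-w$ and use the triangle inequality. This is exactly the content of Lemmas \ref{B-step-1-approx} and \ref{B-2} and the paper's concluding argument, so the structure is correct.

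There is, however, one slip in the rescaling step for the $L^\infty$ gradient bound. With $\tilde w(t,x):=w(\rho^2 t,\rho x)$ and $\tilde\mu(x_n):=\mu(\rho x_n)$, one has $\|D\tilde w\|_{L^\infty(Q_2^+)}=\rho\,\|Dw\|_{L^\infty(Q_{2\rho}^+)}$ (since $D\tilde w=\rho\,(Dw)(\rho^2\cdot,\rho\cdot)$), and also $\fint_{Q_3^+}|D\tilde w|^2\,\tilde\mu(dz)=\rho^2\fint_{Q_{3\rho}^+}|Dw|^2\,\mu(dz)$: the change of variables brings a factor $\rho^2$ from $|D\tilde w|^2$ while the Jacobian cancels between the integral and the $\tilde\mu$-measure in the weighted average. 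Applying Proposition \ref{B-Lipschitz-est} (with $\lambda=0$) to $\tilde w$ and undoing the scaling therefore gives $\rho^2\|Dw\|^2_{L^\infty(Q_{2\rho}^+)}\le C\rho^2\fint_{Q_{3\rho}^+}|Dw|^2\,\mu(dz)$, so the two $\rho^2$ factors cancel and the resulting bound is $\|Dw\|^2_{L^\infty(Q_{2\rho}^+)}\le C\fint_{Q_{3\rho}^+}|Dw|^2\,\mu(dz)$, with no power of $\rho$. The inequality you wrote, $\|Dw\|^2_{L^\infty(Q_\rho^+)}\le C\rho^{-2}\fint_{Q_{3\rho}^+}|Dw|^2\,\mu(dz)$, carries an extraneous $\rho^{-2}$ which, if pushed through the final triangle inequality, would yield a bound inconsistent with the proposition. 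Once that factor is removed, the rest of your argument is correct and matches the paper.
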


The rest of the section is to prove Proposition \ref{B-approxi-propos}. Similar to the proof of Proposition \ref{inter-approxi-propos}, we divide the proof into two steps. In the first step, we compare the solution $u$ with the solution $v$ of the homogeneous equation
\begin{equation} \label{v-Q32-eqn-B}
\left\{
\begin{array}{cccl}
\mu(x_n) a_0(x_n) v_t - \textup{div}[\mu(x_n) \mathbf{A}  \nabla v ] & = & 0 & \quad  \text{in}\,\,   Q_{4\rho}^+, \\
\displaystyle{\lim_{x_n \rightarrow 0^+}\wei{\mu(x_n)\A \nabla v, \mathbf{e}_n}} & = &0 &\quad \text{in}\,\, Q_{4\rho}^+, \\
 v & = & u & \quad \text{on} \,\, \partial_p Q_{4\rho}^+ \cap \{x_n>0\}.
\end{array} \right.
\end{equation}
Here $\partial_p Q_{4\rho}^+$ denotes the parabolic boundary of $Q_{4\rho}^+$. Also, by weak solution of \eqref{v-Q32-eqn-B}, we mean that $v \in V^{1,2}(Q_{4\rho},\mu)$ satisfying
$v -u =0$ on $\partial_p Q_{4\rho}^+ \cap \{x_n>0\}$ in the sense of trace, and
\[
-\int_{Q_{4\rho}^+} \mu(x_n)  a_0(x_n) v \varphi_t \, dz+ \int_{Q_{4\rho}^+} \wei{\mu(x_n) \A \nabla v, \nabla \varphi}\, dz =0, \quad \forall \ \varphi \in C_0^\infty(Q_{4\rho}).
\]
Our result is stated  and proved in the following lemma.
\begin{lemma} \label{B-step-1-approx} Assume that the conditions in Proposition \ref{B-approxi-propos} hold. Then, for any weak solution $u$ of \eqref{B-Q-2.eqn}, there exists a weak solution $v$ of \eqref{v-Q32-eqn-B} satisfying
\[
\begin{split}
& \sup_{t \in \Gamma_{4\rho}} \rho^{-2} \fint_{D_{4\rho}^+} |u -v|^2 \,\mu(dx)  + \fint_{Q_{4\rho}^+} |D(u-v)|^2 \,{\mu}(dz) \\
& \leq C(\Lambda, M_0, n) \left[ \fint_{Q_{4\rho}^+} |\F|^2 \,{\mu}(dz)  + \rho^2\left( \fint_{Q_{4\rho}^+} |f|^{2l_0'}  \,{\mu}(dz) \right)^{1/l_0'}  \right].
\end{split}
\]
Moreover, there exists $\epsilon_0 >0$ depending only on $\Lambda, M_0$, and $n$ such that for any $\kappa \in [2,2+\epsilon_0]$,
\begin{align*} 
&\left( \fint_{Q_{3\rho}^+} |D v|^{\kappa} \,{\mu}(dz) \right)^{\frac{2}{\kappa}}\\
&\leq C(\Lambda, M_0, n, \kappa)\left[\fint_{Q_{4\rho}^+} | D u |^2 \,\mu(dx) \right. + \left.  \fint_{Q_{4\rho}^+} |\F|^2 \,{\mu}(dz)  + \rho^2\left( \fint_{Q_{4\rho}^+} |f|^{2l_0'} \,{\mu}(dz)  \right)^{1/l_0'}  \right].
\end{align*}
\end{lemma}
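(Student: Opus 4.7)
The plan is to follow the strategy of Lemma \ref{inter-1} almost verbatim, adapted to the half-space with conormal boundary condition. First I would construct $\hat v \in V^{1,2}(Q_{4\rho}^+,\mu)$ as a weak solution of the auxiliary mixed problem
\[
\mu(x_n)a_0(x_n)\hat v_t - \operatorname{div}[\mu(x_n)(\A\nabla\hat v - \F)] = \mu(x_n)f \quad\text{in } Q_{4\rho}^+,
\]
with the conormal condition $\lim_{x_n\to 0^+}\langle\mu(x_n)(\A\nabla\hat v-\F),\mathbf e_n\rangle=0$ and $\hat v=0$ on the rest of $\partial_p Q_{4\rho}^+$. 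Existence is by the Galerkin approximation method in the weighted Sobolev space $W^{1,2}(D_{4\rho}^+,\mu)$ whose elements are \emph{not} required to vanish on $\{x_n=0\}$ (the conormal condition is encoded weakly, not essentially). Setting $v:=u-\hat v$ immediately yields a weak solution of \eqref{v-Q32-eqn-B}, so the only non-trivial issue is the difference estimate.

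To establish it, I would reproduce the proof of Lemma \ref{inter-energy} on $Q_{4\rho}^+$: apply the Steklov-averaged weak formulation for $\hat v$, test against $\hat v\,\eta^2\sigma(t)$ with a spatial cutoff $\eta$ supported in $D_{4\rho}$ (not required to vanish at $x_n=0$, since the conormal condition produces no boundary term) and a time cutoff $\sigma$, then invoke \eqref{ellipticity} and Young's inequality to absorb the $\A\nabla\hat v$ contribution. The delicate $f$-term is handled via Hölder with the conjugate pair $(2l_0',2l_0)$ from \eqref{l-0-prime}, followed by the weighted parabolic embedding Lemma \ref{weighted-Sobolev-imbed} applied to $\hat v \eta$, and a second Young's inequality to re-absorb the resulting $V^{1,2}$-norm. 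The doubling property of $\mu$ then produces the desired bound
\[
\sup_{\Gamma_{4\rho}}\rho^{-2}\fint_{D_{4\rho}^+}|\hat v|^2\,\mu(dx)+\fint_{Q_{4\rho}^+}|D\hat v|^2\,\mu(dz)\le C\Big[\fint_{Q_{4\rho}^+}|\F|^2\,\mu(dz)+\rho^2\big(\fint_{Q_{4\rho}^+}|f|^{2l_0'}\,\mu(dz)\big)^{1/l_0'}\Big].
\]

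For the reverse Hölder bound on $Dv$, I would apply the boundary weighted reverse Hölder estimate, Proposition \ref{reverse-holder-bdr}, to $v$ (with $\F\equiv 0$, $f\equiv 0$) after a scaling that maps $Q_{4\rho}^+$ to $Q_2^+$; scaling is legitimate because the $A_2$-constant of $\mu$ and condition \eqref{eq11.42c} are both scale-invariant, and the ellipticity \eqref{ellipticity} transforms trivially. This yields $(\fint_{Q_{3\rho}^+}|Dv|^\kappa\,\mu(dz))^{2/\kappa}\le C\fint_{Q_{4\rho}^+}|Dv|^2\,\mu(dz)$ for every $\kappa\in[2,2+\epsilon_0]$ with $\epsilon_0=\epsilon_0(n,\Lambda,M_0)$. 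The triangle inequality $|Dv|\le |Du|+|D\hat v|$ combined with the previous energy bound for $\hat v$ then converts this into the claimed estimate. The only genuine obstacle is verifying that the technical hypotheses of Proposition \ref{reverse-holder-bdr} (namely $(1+n)(1/l_0'-1)\le 1$ for $\mu\in A_2(\bR^n)$, respectively $(3+n)(1/l_0'-1)\le 2$ for $\mu\in A_2(\bR)$) survive scaling and are inherited from the hypotheses of Proposition \ref{B-approxi-propos}; this follows from the fact that $l_0'$ depends only on $(n,M_0)$, and both are preserved under rescaling.
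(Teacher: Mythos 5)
Your proposal is correct and follows essentially the same route as the paper, which states that the proof of this lemma "is similar to that of Lemma \ref{inter-1} by using Proposition \ref{reverse-holder-bdr} instead of Proposition \ref{reverse-holder-inter}." Your decomposition $v = u - \hat v$ via the auxiliary mixed Dirichlet/conormal problem, the Galerkin existence argument, the energy estimate adapted from Lemma \ref{inter-energy} (testing with $\hat v\,\eta^2\sigma$, using the $(2l_0',2l_0)$ Hölder split and Lemma \ref{weighted-Sobolev-imbed} on the $f$-term), and the final application of Proposition \ref{reverse-holder-bdr} plus the triangle inequality all match the paper's intended argument, including the observation that $l_0'$ depends only on $(n,M_0)$ so the technical exponent condition is preserved under scaling.
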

\begin{proof}
The proof is similar to that of Lemma \ref{inter-1} by using Proposition \ref{reverse-holder-bdr} instead of Proposition \ref{reverse-holder-inter}.
\end{proof}
In the next step, we compare the solution $v$ of \eqref{v-Q32-eqn-B} with a solution $w$ of the following equation in which the coefficient $\A$ is frozen in  the $(t,x')$-variables.
\begin{equation} \label{w-Q4-eqn-B}
\left\{
\begin{array}{cccl}
\mu(x_n) a_0(x_n) w_t - \text{div}[\mu(x_n) \bar{\A}_{Q_{3\rho}'}(x_n) \nabla w] & = & 0 & \quad \text{in} \quad Q_{3\rho}^+, \\
\displaystyle{\lim_{x_n \rightarrow 0^+} \wei{\mu(x_n) \bar{\A}_{Q_{3\rho}'^+}(x_n) \nabla w, \mathbf{e}_n} }& = & 0 & \quad \text{in} \quad Q_{3\rho}^+, \\
w & = &v & \quad \text{on} \quad \partial_p Q_{3\rho}^+ \cap \{x_n>0\}.
\end{array} \right.
\end{equation}
\begin{lemma} \label{B-2}  Assume that the conditions in Proposition \ref{B-approxi-propos} hold true. Then, there exist a constant $\kappa_0>0$ depending only on $\Lambda, M_0, K_0$ and a weak solution $w \in  V^{1,2}(Q_{3\rho},\mu)$ of \eqref{w-Q4-eqn-B}, which satisfies
\[
\begin{split}
& \sup_{t \in \Gamma_{3\rho}}\rho^{-2} \fint_{D_{3\rho}^+} |w -v|^2 \,\mu(dx) + \int_{Q_{3\rho}^+} |D(w-v)|^2 \,{\mu}(dz)  \\
& \leq C(\Lambda, M_0, n)[\A^{\#}_{3\rho}]^{\kappa_0} \left[\fint_{Q_{4\rho}^+} | D u |^2 \,{\mu}(dz) +  \fint_{Q_{4\rho}^+} |\F|^2 \,{\mu}(dz)  + \rho^2\left( \fint_{Q_{4\rho}^+} |f|^{2l_0'} \,{\mu}(dz)  \right)^{1/l_0'}  \right],
\end{split}
\]
where $v$ is defined in Lemma \ref{B-step-1-approx}. 
\end{lemma}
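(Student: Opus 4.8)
The strategy mirrors that of Lemma \ref{inter-2} in the interior case, with the flat-boundary conormal condition handled as in Lemma \ref{B-step-1-approx}. First I would set
\[
G(t,x) = \big(\A(t,x) - \bar{\A}_{Q_{3\rho}'}(x_n)\big)\nabla v(t,x),
\]
and let $\hat{w} \in V^{1,2}(Q_{3\rho}^+,\mu)$ be the weak solution of the inhomogeneous equation
\begin{equation*}
\left\{
\begin{array}{cccl}
\mu(x_n) a_0(x_n) \hat{w}_t - \text{div}[\mu(x_n) \bar{\A}_{Q_{3\rho}'}(x_n) \nabla \hat{w}] & = & \text{div}[\mu(x_n) G] & \quad \text{in} \quad Q_{3\rho}^+, \\
\displaystyle{\lim_{x_n \rightarrow 0^+} \wei{\mu(x_n)\big(\bar{\A}_{Q_{3\rho}'}(x_n) \nabla \hat{w} + G\big), \mathbf{e}_n}} & = & 0 & \quad \text{in} \quad Q_{3\rho}^+, \\
\hat{w} & = & 0 & \quad \text{on} \quad \partial_p Q_{3\rho}^+ \cap \{x_n > 0\},
\end{array}
\right.
\end{equation*}
whose existence follows from the Galerkin approximation method once we know $G \in L^2(Q_{3\rho}^+,\mu)$. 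Then $w := v - \hat{w}$ solves \eqref{w-Q4-eqn-B} with the correct conormal condition (the $G$-terms cancel), so it remains to estimate $\hat{w}$ in the stated norm.

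Next I would bound $G$ in $L^2(Q_{3\rho}^+,\mu)$ exactly as in \eqref{G-inter-est}: pick $\kappa \in (2, 2+\epsilon_0)$ with $\epsilon_0$ from Lemma \ref{B-step-1-approx}, apply H\"older's inequality to split $\int |\A - \bar{\A}_{Q_{3\rho}'}|^2 |Dv|^2$ into a factor $\big(\fint |\A - \bar{\A}_{Q_{3\rho}'}|^{2\kappa/(\kappa-2)}\,\mu(dz)\big)^{(\kappa-2)/\kappa}$ and a factor $\big(\fint |Dv|^\kappa\,\mu(dz)\big)^{2/\kappa}$. Since $\|\A\|_{L^\infty} \le \Lambda$, the first factor is controlled by $C(n,\Lambda,M_0)\big(\fint_{Q_{3\rho}^+} |\A - \bar{\A}_{Q_{3\rho}'}|\,\mu(dz)\big)^{(\kappa-2)/\kappa} = C(n,\Lambda,M_0)[\A^{\#}_{3\rho}]^{(\kappa-2)/\kappa}$ after enlarging the averaging cylinder; the second factor is controlled by the weighted reverse H\"older inequality for $v$ from Lemma \ref{B-step-1-approx}, which gives the $\fint_{Q_{4\rho}^+}|Du|^2 + \fint_{Q_{4\rho}^+}|\F|^2 + \rho^2(\fint_{Q_{4\rho}^+}|f|^{2l_0'})^{1/l_0'}$ bundle. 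Setting $\kappa_0 = (\kappa - 2)/\kappa > 0$, this yields
\[
\fint_{Q_{3\rho}^+} |G|^2 \,\mu(dz) \le C(n,\Lambda,M_0)[\A^{\#}_{3\rho}]^{\kappa_0}\Big[\fint_{Q_{4\rho}^+} |Du|^2 \,\mu(dz) + \fint_{Q_{4\rho}^+} |\F|^2 \,\mu(dz) + \rho^2\big(\fint_{Q_{4\rho}^+} |f|^{2l_0'} \,\mu(dz)\big)^{1/l_0'}\Big].
\]

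Then I would run the standard weighted energy (Caccioppoli) estimate for $\hat{w}$, analogous to the proof of Lemma \ref{inter-energy} and Lemma \ref{B-Cacioppolli}, testing the equation against $\hat{w}\eta^2\sigma$ with a cutoff; because the test function vanishes on the lateral and initial parts of $\partial_p Q_{3\rho}^+$ and the conormal boundary term drops out, one obtains
\[
\sup_{t \in \Gamma_{3\rho}} \rho^{-2}\fint_{D_{3\rho}^+} |\hat{w}|^2 \,\mu(dx) + \fint_{Q_{3\rho}^+} |D\hat{w}|^2 \,\mu(dz) \le C(n,\Lambda)\fint_{Q_{3\rho}^+} |G|^2 \,\mu(dz),
\]
using the doubling property of $\mu$ to pass between cylinders of comparable radii. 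Combining this with the $G$-bound above, and recalling $w - v = -\hat{w}$, gives the claimed estimate. The only mildly delicate point is the well-posedness and energy bound for $\hat{w}$ with the conormal condition and the degenerate/singular weight $\mu$; this is handled precisely as in the already-established Lemmas \ref{B-Cacioppolli} and \ref{B-step-1-approx}, so no new obstacle arises. The proof is then complete.
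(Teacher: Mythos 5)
Your proposal is correct and takes essentially the same route as the paper, which simply states that the proof "is similar to that of Lemma \ref{inter-2}" and omits the details. You correctly identify the one genuinely new feature of the boundary case — that the conormal condition for the auxiliary function $\hat{w}$ must include the $G$-term so that it cancels against the boundary contribution from $v$ when one forms $w = v - \hat{w}$ — and the rest (the $L^2$ bound on $G$ via H\"older's inequality, the reverse H\"older inequality of Lemma \ref{B-step-1-approx}, the $\|\A\|_{L^\infty}\le\Lambda$ interpolation giving $\kappa_0 = (\kappa-2)/\kappa$, and the weighted Caccioppoli estimate for $\hat{w}$) mirrors the interior argument as intended.
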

\begin{proof}
The proof is similar to that of Lemma \ref{inter-2}, and thus omitted.
\end{proof}

From Lemma \ref{B-step-1-approx} and Lemma \ref{B-2}, we are ready to complete the proof of Proposition \ref{B-approxi-propos}.
\begin{proof}[Proof of Proposition \ref{B-approxi-propos}] Let $\tilde{u} = u-w \in Q_{3\rho}^+$, where $w$ is defined in Lemma \ref{B-2}. 
The estimates in Proposition \ref{B-approxi-propos} follows from Lemmas \ref{B-step-1-approx} and \ref{B-2}, Proposition \ref{B-Lipschitz-est}, the triangle inequality, and the doubling property of $\mu$.
\end{proof}
\subsection{Global boundary solution decomposition and their estimates} Recall that for each $z_0 \in \bR \times \overline{\bR^{n}_+}$, $Q_{\rho}^+(z_0) = Q_\rho(z_0) \cap (\bR \times \bR^{n}_+)$ and $D_\rho^+(x) = D_\rho(x) \cap \bR^{n}_+$. 
We study the equation
\begin{equation} \label{main-Q-R-eqn}
\left\{
\begin{array}{cccl}
\mu(x_n) a_0(x_n)u_t - \textup{div}[\mu(x_n)(\A \nabla u - \F)] & = & \mu(x_n)f  \\
\displaystyle{\lim_{x_n \rightarrow 0^+}} \wei{\mu(x_n)(\A \nabla u - \F), \mathbf{e}_n } & = & 0
\end{array}
\right.\quad \text{in} \quad Q_{2}^+.
\end{equation}
We define
\[
[[\A]]_{\textup{BMO}(Q_{3/2}^+, \mu)}  = \sup_{\rho \in (0, R_0)} \sup_{z_0 = (z_0', x_{n0}) \in Q_{3/2}^+}  \fint_{Q_{\rho}^+(z_0)} |\A - \bar{\A}_{Q_{\rho}'(z_0')}(x_n)| \,{\mu}(dz).
\]

The following result is a corollary of Propositions \ref{inter-approxi-propos} and \ref{B-approxi-propos}.

\begin{proposition} \label{G-approx-propos}
Let $\Lambda>0, K_0 >0, M_0 >1$ be given.  Then, there exists $\kappa_0 = \kappa_0(n,\Lambda, M_0) >0$ such that the following statement holds true. Assume that $\mu \in A_2(\bR)$ with $[\mu]_{A_2} \leq M_0$ and \eqref{eq11.42c} holds. Assume also that $\A: Q_{2}^+ \rightarrow \mathbb{R}^{n\times n}$. 
Suppose that $\F :Q_{2}^+ \rightarrow \bR^n$ and $f: Q_{2}^+ \rightarrow \bR$ satisfy $\F \in L^2(Q_{2}^+, {\mu})$ and $f \in L^{2l_0'}(Q_{2}^+, {\mu})$. Then, for any $z_0 = (t_0, x_0) \in Q_{3/2}^+$, $\rho \in (0, R_0/30)$, and for a weak solution $u \in V^{1,2}(Q_2^+,\mu)$ of \eqref{main-Q-R-eqn}, we can write
\[
u(t, x) = \tilde{u}(t, x) + w(t, x) \quad  \text{in}\,\, Q_{3\rho}^+(z_0),
\]
where $\tilde{u}$ and $w$ are functions in $V^{1,2}(Q_{3\rho}^+(z_0),\mu)$ and satisfy
\begin{align}  \label{B-u-tilde-est-inter}
& \sup_{t \in \Gamma_{3\rho}(t_0)} \rho^{-2} \fint_{D_{3\rho}^+(x_0)} |\tilde{u} |^2 \,\mu(dx) + \fint_{Q_{3\rho}^+(z_0)} |D \tilde{u}|^2 \,{\mu}(dz)
 \leq C(\Lambda, M_0, n)\Bigg[\fint_{Q_{30\rho}^+(z_0)} |\F|^2 \,{\mu}(dz)   \nonumber\\
&\quad + \rho^2 \left( \fint_{Q_{30\rho}^+(z_0)} |f|^{2l_0'} \,{\mu}(dz)  \right)^{1/l_0'}+ [[\A]]_{\textup{BMO}(Q_{3/2}^+, \mu)}^{\kappa_0} \fint_{Q_{30\rho}^+(z_0)} |D u|^2 \,{\mu}(dz) \Bigg]
\end{align}
and
\begin{align} \label{D-L-infty-w-inter}
&\|D w\|_{L^\infty(Q_{\rho}^+(z_0))}^{2} \leq C(\Lambda, M_0, K_0, n) \Bigg[  \fint_{Q_{30\rho}^+(z_0)} |D u|^2 \,{\mu}(dz) \nonumber\\
&\quad + \fint_{Q_{30\rho}^+(z_0)} |\F|^2 \,{\mu}(dz)  + \rho^2 \left( \fint_{Q_{30\rho}^+(z_0)} |f|^{2l_0'} \,{\mu}(dz)  \right)^{1/l_0'}  \Bigg].
\end{align}
\end{proposition}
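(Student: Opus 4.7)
The plan is to split into two cases based on the distance from $z_0=(t_0,x_0',x_{n0})$ to the boundary $\{x_n=0\}$, applying the appropriate local decomposition result at a nearby base point and a slightly enlarged scale: Proposition \ref{inter-approxi-propos} for interior points and Proposition \ref{B-approxi-propos} for near-boundary points. The factor $30$ on the right-hand side of \eqref{B-u-tilde-est-inter}--\eqref{D-L-infty-w-inter} and the restriction $\rho\le R_0/30$ are engineered precisely so that in the boundary case a boundary-centered dilate of the local decomposition ball fits inside $Q_{30\rho}^+(z_0)$ while still containing $Q_{3\rho}^+(z_0)$.

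\emph{Case 1 (Interior).} Suppose $x_{n0}\ge 5\rho$, so that $D_{3\rho}(x_0)\subset\{x_n>0\}$ and $Q_{3\rho}^+(z_0)=Q_{3\rho}(z_0)$. For $y\in(x_{n0}-\rho,x_{n0}+\rho)\subset(4\rho,\infty)$ we have $\rho<y/2$, hence assumption \eqref{eq11.42c} yields condition \eqref{mu-rho} with the same $K_0$. Applying Proposition \ref{inter-approxi-propos} at $z_0$ with scale $\rho$ gives the decomposition $u=\tilde u+w$ on $Q_{3\rho}^+(z_0)$ with right-hand sides over $Q_{4\rho}(z_0)=Q_{4\rho}^+(z_0)\subset Q_{30\rho}^+(z_0)$. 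Since $z_0\in Q_{3/2}^+$ and $3\rho<R_0$, the bound $\A^\#_{3\rho}(z_0)\le [[\A]]_{\textup{BMO}(Q_{3/2}^+,\mu)}$ is immediate from the definition.

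\emph{Case 2 (Boundary).} Suppose $x_{n0}<5\rho$. Set $\tilde z_0=(t_0,x_0',0)$ and take the enlarged scale $\tilde\rho=6\rho$. I apply Proposition \ref{B-approxi-propos} at the boundary point $\tilde z_0$ with scale $\tilde\rho$; this is legitimate because $Q_{5\tilde\rho}^+(\tilde z_0)=Q_{30\rho}^+(\tilde z_0)\subset Q_2^+$ (using $z_0\in Q_{3/2}^+$ and $30\rho\le R_0<1/4$), and the conormal boundary condition for $u$ on $\{x_n=0\}$ localizes. From $|z_0-\tilde z_0|=x_{n0}<5\rho$ together with $\tilde\rho=6\rho$, one checks
\begin{equation*}
Q_{3\rho}^+(z_0)\subset Q_{3\tilde\rho}^+(\tilde z_0),\quad Q_\rho^+(z_0)\subset Q_{\tilde\rho}^+(\tilde z_0),\quad Q_{4\tilde\rho}^+(\tilde z_0)\subset Q_{30\rho}^+(z_0),
\end{equation*}
ensuring respectively that the decomposition is valid on $Q_{3\rho}^+(z_0)$, that the $L^\infty$-bound for $Dw$ transfers to $Q_\rho^+(z_0)$, and that the source integrals remain inside $Q_{30\rho}^+(z_0)$. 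Since $3\tilde\rho=18\rho\le R_0$, we also have $\A^\#_{3\tilde\rho}(\tilde z_0)\le [[\A]]_{\textup{BMO}(Q_{3/2}^+,\mu)}$.

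It remains to convert the $\fint$'s from the half-cylinder used by the local decomposition (centered at $z_0$ or $\tilde z_0$) into $\fint$'s over $Q_{3\rho}^+(z_0)$ and $Q_{30\rho}^+(z_0)$ as displayed in \eqref{B-u-tilde-est-inter}--\eqref{D-L-infty-w-inter}. Since $\mu\in A_2(\bR)$, the measure $\mu(dz)$ is doubling on parabolic (half-)cylinders, and in Case 2 all pairs of half-cylinders above are nested with radii comparable up to an absolute factor. Consequently $\mu(Q_{3\tilde\rho}^+(\tilde z_0))\le C(n,M_0)\,\mu(Q_{3\rho}^+(z_0))$ and $\mu(Q_{30\rho}^+(z_0))\le C(n,M_0)\,\mu(Q_{4\tilde\rho}^+(\tilde z_0))$, which absorb into the multiplicative constants. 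The only real work is the scale bookkeeping in Case 2; no new analytic difficulty arises beyond the decomposition results already established.
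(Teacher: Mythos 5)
Your proposal is correct and follows essentially the same two-case decomposition as the paper's own proof: for $x_{n0}\ge 5\rho$ you apply Proposition \ref{inter-approxi-propos} directly at $z_0$ (with the additional, welcome detail of verifying \eqref{mu-rho} from \eqref{eq11.42c}), and for $x_{n0}<5\rho$ you shift to the boundary point $\hat z=(t_0,x_0',0)$, apply Proposition \ref{B-approxi-propos} at the enlarged scale $6\rho$, and conclude via the same inclusions $Q_{3\rho}^+(z_0)\subset Q_{18\rho}^+(\hat z)$, $Q_\rho^+(z_0)\subset Q_{6\rho}^+(\hat z)$, $Q_{24\rho}^+(\hat z)\subset Q_{30\rho}^+(z_0)$ together with the doubling property of $\mu$.
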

\begin{proof}
We fix $x_0 = (x_0', x_{n0}) \in D_{3/2}^+$ and consider the following two cases depending on $x_{n0}$.\\ \
\noindent
{\bf Case I:} $x_{n0} \geq 5\rho$. In this case, $Q_{5\rho}(z_0) \subset Q_{2}^+$. Therefore, Proposition \ref{G-approx-propos} follows immediately from Proposition \ref{inter-approxi-propos} and the doubling property of $\mu$.\\ \
\noindent
{\bf Case II:} $x_{n0} < 5\rho$. In this case let us denote $\hat{x} = (x_0', 0)$ and note that $\hat{z} = (t_0, \hat{x}) \in Q_{5\rho}(z_0) \cap \{x_n=0\}$. Observe that $Q_{5\rho}^+(z_0) \subset Q_{30\rho}^+(\hat{z}) \subset Q_{2}^+$. Then, by applying Proposition \ref{B-approxi-propos} with $\rho$ replaced by $6\rho$ and $Q_{5\rho}^+$ replaced by $Q_{30\rho}^+(\hat{z})$, we see that there exist $\hat{u}$ and $w$ defined in $Q_{18\rho}^+(\hat{z})$ satisfying $u = \hat{u} +w$ in $Q_{18\rho}^+(\hat{z})$, and
\begin{align}  \label{G-u-tilde-est-inter}
& \sup_{t \in \Gamma_{18\rho}(t_0)} \rho^{-2} \fint_{D_{18\rho}^+(\hat{x})} |\tilde{u} |^2 \,\mu(dx) + \fint_{Q_{18\rho}^+(\hat{z})} |D \tilde{u}|^2 \,{\mu}(dz)
\leq C(n,\Lambda, M_0)\Bigg[ \fint_{Q_{24\rho}^+(\hat{z})} |\F|^2 \,{\mu}(dz)\nonumber   \\
& + \rho^2 \left( \fint_{Q_{24\rho}^+(\hat{z})} |f|^{2l_0'} \,{\mu}(dz)  \right)^{1/l_0'} + [[\A]]^{\kappa_0}_{\textup{BMO}(Q_{3/2}^+, \mu)} \fint_{Q_{24\rho}^+(\hat{z})} |D u|^2 \,{\mu}(dz) \Bigg],
\end{align}
\begin{align} \label{G-L-infty-w-inter}
&\|D w\|_{L^\infty(Q_{6\rho}^+(\hat{z}))} \leq C(n,\Lambda, M_0, K_0) \Bigg[  \fint_{Q_{24\rho}^+(\hat{z})} |D u|^2 \,{\mu}(dz)\nonumber\\
&\quad + \fint_{Q_{24\rho}^+(\hat{z})} |\F|^2 \,{\mu}(dz)  + \rho^2 \left( \fint_{Q_{24\rho(\hat{z})}^+} |f|^{2l_0'} \,{\mu}(dz)  \right)^{1/l_0'}  \Bigg].
\end{align}
Since $Q_{3\rho}^+(z_0) \subset Q_{18\rho}^+(\hat{z})$, we also have $u = \hat{u} + w$ in $Q_{3\rho}^+(z_0)$. Moreover, since $Q_{3\rho}^+(z_0) \subset Q_{18\rho}^+(\hat{z})$ and $Q_{24\rho}^+(\hat{z}) \subset Q_{30\rho}^+(z_0)$, and the doubling property of $\mu$, we get \eqref{B-u-tilde-est-inter}.
Observe also that since $x_{n0} < 5\rho$, $Q_{\rho}^+(z_0) \subset Q_{6\rho}^+(\hat{z})$. It then follows from this fact, the doubling property of $\mu$, and \eqref{G-L-infty-w-inter} that \eqref{D-L-infty-w-inter} holds.
The proof is therefore complete.
\end{proof}
\subsection{Boundary level sets estimates} 
Let $\Lambda >0, M_0 \geq 1$, and $K_0\geq 1$ be fixed constants. Let $2<\eta < 2 + \epsilon_0< q$, where $\epsilon_0 =\epsilon_0(n,\Lambda, M_0)$ validates both Propositions \ref{reverse-holder-inter} and \ref{reverse-holder-bdr}.  Let us also denote
\begin{equation} \label{F-def-level}
F(t, x) =  |\F(t, x)|   + \cG(f) |f(t, x)|^{l_0'} \quad \text{in}\,\, Q_{2}^+,
\end{equation}
where $l_0'$ is defined in \eqref{l-0-prime}, and $\cG(f)=\cG_{Q_2^+}(f)$ defined in Proposition \ref{reverse-holder-bdr}. 
As we will see, the function $\cG$ plays an essential role in our proof. Observe also that since $q \geq2$, by H\"older's inequality,
\begin{equation} \label{G-f}
\cG(f)\norm{|f|^{l_0'}}_{L^{q}(Q_{2}^+, {\mu})} = \left( \fint_{Q_{2}^+} |f|^{2 l_0'} \,{\mu}(dz)\right)^{\frac{1 - l_0'}{2 l_0'}}  \norm{f}_{L^{l_0'q}(Q_{2}^+,{\mu})}^{l_0'}\le {\mu}(Q_{2}^+)^{\frac{1}{q} - \frac{1}{ql_0'}} \norm{f}_{L^{l_0'q}(Q_{2}^+,{\mu})}.
\end{equation}
Note that from \eqref{l-0-prime}, and by choosing $l_0$ sufficiently close to $1$, we also obtain
\begin{equation} \label{l-zero-condition}
(3+n)\Big( \frac{1}{l_0'} -1\Big ) \leq 2.
\end{equation}
Let $\delta >0$ be a constant to be specified later, and let $\tau_0 >0$ be the number defined by
\begin{equation} \label{tau-zero-def}
\tau_0 = \left( \fint_{Q_{3/2}^+} |D u|^2 \,{\mu}(dz) \right)^{1/2} + \frac{1}{\delta} \left( \fint_{Q_{3/2}^+} |F|^{\eta} \,{\mu}(dz) \right)^{1/\eta}  < \infty.
\end{equation}
For fixed numbers $1\leq \rho \leq 3/2$ and $\tau >0$, we denote the upper-level set of $|D u|$ in $Q_{\rho}^+$ by
\begin{equation*} 
E_{\rho}(\tau) =\Big \{\text{Lebesgue point}\ (t, x) \in Q_{\rho}^+ \ \text{of} \ D u: |D u(t,x)| > \tau \Big \}.
\end{equation*}
The following proposition estimating the upper-level sets of $|D u|$ is the main result of this section.
\begin{proposition} \label{main-level-set} There exist $N_0  = N_0 (n,\Lambda, M_0, K_0)>1$, and $B_0 = B_0(n,M_0) >0$  such that the following statement holds. For every $\epsilon \in (0,1)$, there exists $\delta = \delta(\epsilon, \Lambda, M_0, n)>0$ such that if \eqref{l-zero-condition} holds and $[[\A]]_{\textup{BMO}(Q_{3/2}^+, \mu)} \leq \delta$, we have
\[
{\mu}(E_{\bar{\rho}_1}(N_0\tau)) \leq \epsilon  \left[{\mu}(E_{\bar{\rho}_2}(\tau/4)) + \frac{1}{(\delta \tau)^\eta} \int_{\delta \tau/4} ^\infty s^\eta {\mu}\Big ( \{(t,x) \in Q_2^+: |F(t,x)| >s \}\Big)\frac{ds}{s} \right],
\]
for any weak solution $u$ of \eqref{main-Q-R-eqn}, $1 \leq \bar{\rho}_1 < \bar{\rho}_2 \leq 1+R_0$, and $\tau > B_0  (\bar{\rho}_2-\bar{\rho}_1)^{-\frac{n+3}{2}} \tau_0$.
\end{proposition}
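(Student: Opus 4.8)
The plan is to run a standard Calder\'on--Zygmund-type stopping-time (Vitali covering) argument on the upper-level set, using the solution decomposition from Proposition \ref{G-approx-propos} together with the weighted reverse H\"older inequalities (Propositions \ref{reverse-holder-inter} and \ref{reverse-holder-bdr}) as the local ingredients. First I would set up a ``good-$\tau$'' comparison: for a Lebesgue point $z_1 \in E_{\bar\rho_1}(N_0\tau)$, I claim there is a small radius $\rho = \rho(z_1) \in (0, R_0/30)$ with $Q_{3\rho}^+(z_1)$ (suitably placed) such that
\[
\fint_{Q_{\rho}^+(z_1)} |Du|^2 \,{\mu}(dz) + \frac{1}{\delta^2}\fint_{Q_{\rho}^+(z_1)} |F|^\eta \,{\mu}(dz)^{2/\eta} > \tau^2,
\]
while on a fixed dilate this average is $\le \tau^2$ (this uses the lower bound $\tau > B_0(\bar\rho_2-\bar\rho_1)^{-(n+3)/2}\tau_0$ to guarantee that large concentric cylinders do not exceed the threshold, exactly where the factor ${\mu}(Q_2)/{\mu}(Q_\rho) \lesssim \rho^{-(n+3)}$ and the definition \eqref{tau-zero-def} of $\tau_0$ enter). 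The existence of such a ``stopping radius'' follows from the Lebesgue differentiation theorem and absolute continuity of the measure $|Du|^2\,\mu(dz) + \delta^{-2}|F|^\eta\,\mu(dz)$; here one also restricts to $1 \le \bar\rho_1 < \bar\rho_2 \le 1 + R_0$ so that the enlarged cylinders stay inside $Q_{3/2}^+$ where the BMO smallness and the reverse H\"older hypotheses are available.

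Next, on each such stopping cylinder $Q_{3\rho}^+(z_1)$ I would invoke Proposition \ref{G-approx-propos} to decompose $u = \tilde u + w$, with $\|Dw\|_{L^\infty(Q_\rho^+(z_1))}^2 \le N_1^2 \tau^2$ for a dimensional constant $N_1 = N_1(n,\Lambda,M_0,K_0)$ coming from the ``$\le \tau^2$ on a dilate'' property, and with the weighted $L^2$ average of $|D\tilde u|^2$ controlled by
\[
[[\A]]_{\textup{BMO}(Q_{3/2}^+,\mu)}^{\kappa_0}\,\tau^2 + \fint_{Q_{30\rho}^+(z_1)}|F|^2\,{\mu}(dz),
\]
roughly speaking (using \eqref{F-def-level}, the definition of $\cG$, and \eqref{l-zero-condition} to absorb the $\rho^2(\fint|f|^{2l_0'})^{1/l_0'}$ term into $\fint |F|^2\,\mu(dz)$ via the argument already carried out inside the proof of Proposition \ref{reverse-holder-inter}). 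Choosing $N_0 = 2N_1$, one gets the pointwise implication: if $|Du(z)| > N_0\tau$ for $z \in Q_\rho^+(z_1)$, then $|D\tilde u(z)| > N_1\tau \ge \|Dw\|_{L^\infty}$, so by Chebyshev
\[
{\mu}\big(E_{\bar\rho_1}(N_0\tau) \cap Q_\rho^+(z_1)\big)
\le \frac{1}{(N_1\tau)^2}\int_{Q_\rho^+(z_1)} |D\tilde u|^2\,{\mu}(dz)
\le C\Big(\delta^{\kappa_0} + \text{[tail of }F\text{]}\Big)\,{\mu}(Q_\rho^+(z_1)).
\]
The $F$-contribution is split at height $\delta\tau/4$: on the region where $|F| \le \delta\tau/4$ it is absorbed into $\varepsilon\,{\mu}(E_{\bar\rho_2}(\tau/4))$ using that the stopping cylinder sits inside $E_{\bar\rho_2}(\tau/4)$ up to the bad set of $F$ (by the ``$> \tau^2$'' side of the stopping condition and Chebyshev again), and on $\{|F| > \delta\tau/4\}$ it produces the distribution-function tail term $\frac{1}{(\delta\tau)^\eta}\int_{\delta\tau/4}^\infty s^\eta {\mu}(\{|F|>s\})\,\frac{ds}{s}$ after an elementary layer-cake manipulation of $\fint_{Q_{30\rho}^+}|F|^2\,\mu(dz)$ against $|F|^\eta$ with $\eta > 2$.

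Finally I would apply the weighted Vitali covering lemma (valid since $\mu(x_n)\,dz$ is a doubling measure on parabolic cylinders, with doubling constant depending only on $n$ and $M_0$) to the family $\{Q_\rho^+(z_1)\}$ covering $E_{\bar\rho_1}(N_0\tau)$, extract a countable disjoint subfamily whose fivefold dilates still cover, and sum the local estimates; the doubling constant is exactly where $B_0 = B_0(n,M_0)$ is produced. Choosing $\delta$ small (depending on $\varepsilon, n, \Lambda, M_0$) so that $C\delta^{\kappa_0} \le \varepsilon/2$ and the covering-overlap constant times the remaining factors is $\le \varepsilon$ yields the claimed inequality. I expect the main obstacle to be the bookkeeping in the stopping-time step: one must carefully interlock the radii $3\rho$, $30\rho$ appearing in Proposition \ref{G-approx-propos} with the dilation parameters $\bar\rho_1, \bar\rho_2$ and the constant $B_0(\bar\rho_2-\bar\rho_1)^{-(n+3)/2}$ so that (i) all cylinders used stay within $Q_{3/2}^+$, (ii) the ``large cylinders are below threshold'' property holds uniformly, and (iii) the $\rho$-dependent weighted-measure ratios (which for $A_2(\bR)$ weights blow up like $\rho^{-(n+3)}$ rather than $\rho^{-(n+2)}$) are absorbed correctly — this is precisely the place where the hypothesis \eqref{l-zero-condition}, i.e. $(3+n)(1/l_0'-1)\le 2$, is indispensable.
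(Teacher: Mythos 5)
Your outline follows the same overall route as the paper: a stopping-time argument at level $\tau$ for the ``Calder\'on--Zygmund functional'' $CZ_\rho(\bar z)$, a Vitali covering by stopping cylinders, the solution decomposition $u=\tilde u+w$ from Proposition \ref{G-approx-propos} with $\|Dw\|_{L^\infty}\lesssim\tau$ on each cylinder, Chebyshev applied to $D\tilde u$ with the $\delta^{\kappa_0}$ smallness, and then a density-type estimate converting $\mu(Q_{\rho_i}^+(z_i))$ into $\mu\big(Q_{\rho_i}^+(z_i)\cap E_{\bar\rho_2}(\tau/4)\big)$ plus an $F$-tail. This matches the structure of Lemmas \ref{stopping-lemma}, \ref{covering-lemma} and the proof of Proposition \ref{main-level-set}, and your bookkeeping remarks (where \eqref{l-zero-condition} enters, where the $A_2$ volume ratio $\rho^{-(n+3)}$ produces $B_0$, the radii $3\rho$ versus $30\rho$) are accurate.

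There is, however, a genuine gap in the crucial density step. You write that ``the stopping cylinder sits inside $E_{\bar\rho_2}(\tau/4)$ up to the bad set of $F$ (by the `$>\tau^2$' side of the stopping condition and Chebyshev again).'' This cannot be right as stated: the set inclusion $Q_{\rho_i}^+(z_i)\subset E_{\bar\rho_2}(\tau/4)\cup\{|F|>\delta\tau/4\}$ is false in general, and Chebyshev's inequality only produces \emph{upper} bounds on $\mu(\{|Du|>\tau/4\})$, whereas here one needs a \emph{lower} bound on the measure of the intersection. What the stopping condition actually gives, when the $F$-part of $CZ_{\rho_i}(z_i)$ is small, is $\fint_{Q_{\rho_i}^+(z_i)}|Du|^2\,\mu(dz)\gtrsim\tau^2$; throwing away the complement of $E_{\bar\rho_2}(\tau/4)$ (where $|Du|\le\tau/4$) leaves $\int_{Q_{\rho_i}^+\cap E_{\bar\rho_2}(\tau/4)}|Du|^2\,\mu(dz)\gtrsim\tau^2\mu(Q_{\rho_i}^+)$, and to extract a measure lower bound from this one must apply H\"older's inequality with exponents $\eta/2$ and $(\eta/2)'$ together with the reverse H\"older bound $\big(\fint_{Q_{\rho_i}^+}|Du|^\eta\,\mu(dz)\big)^{2/\eta}\lesssim\tau^2$ (using $CZ_{2\rho_i}(z_i)<\tau$), arriving at
\[
\mu(Q_{\rho_i}^+(z_i))^{1-2/\eta}\lesssim \mu\big(Q_{\rho_i}^+(z_i)\cap E_{\bar\rho_2}(\tau/4)\big)^{1-2/\eta}.
\]
This is exactly the content of \eqref{K-i-est} in Lemma \ref{covering-lemma}, and it is where the higher integrability $\eta>2$ from Propositions \ref{reverse-holder-inter} and \ref{reverse-holder-bdr} is \emph{indispensable} (not merely a convenience). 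You do list the reverse H\"older inequalities as ingredients at the start, but the mechanism you describe at this step would not close without supplying this H\"older-plus-reverse-H\"older argument; naming ``Chebyshev'' here is a mischaracterization and would stall a careful write-up.
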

The rest of the section is to prove this proposition. We follow and extend the approach developed in \cite{A-Mingione} and used in \cite{Baroni, Bolegein, Bui-Duong}. For each $\bar{z} \in \overline{Q_{2}^+}$ and $\rho  \in (0,1)$, we define
\begin{equation*}
CZ_\rho(\bar{z}) = \left(\fint_{Q_{\rho}^+(\bar{z})} |D u|^2 \,{\mu}(dz) \right)^{1/2} + \frac{1}{\delta}  \left(\fint_{Q_{\rho}^+(\bar{z})} |F|^{\eta} \,{\mu}(dz) \right)^{1/\eta}.
\end{equation*}
Several lemmas are needed to prove Proposition \ref{tau-zero-def}. Our first lemma is a stopping time argument lemma.
\begin{lemma} \label{stopping-lemma} There exists $B_0 = B_0(n,M_0)>0 $ such that for each $1\leq \bar{\rho}_1 < \bar{\rho}_2 \leq 1+R_0$, $\tau >B_0  (\bar{\rho}_2-\bar{\rho}_1)^{-\frac{n+3}{2}} \tau_0$, and for $\bar{z} \in E_{\bar{\rho}_1}(\tau)$, there is $\rho_{\bar{z}} < \frac{\bar{\rho}_2-\bar{\rho}_1}{200}$ such that
\[
CZ_{\rho_{\bar{z}}}(\bar{z}) = \tau, \quad \text{and} \quad CZ_{\rho}(\bar{z}) < \tau \quad \forall \ \rho \in (\rho_{\bar{z}}, 1/2-R_0).
\]
\end{lemma}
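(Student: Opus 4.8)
The plan is to run the classical Calderón–Zygmund stopping-time argument, adapted to the weighted parabolic cylinders $Q_\rho^+(\bar z)$, using the doubling property of $\mu$ to compare averages at different scales. First I would record the continuity of the map $\rho \mapsto CZ_\rho(\bar z)$: since $|Du|^2$ and $|F|^\eta$ are locally integrable with respect to $\mu(dz)$ and $\mu$ is a (doubling, hence non-atomic) measure, both weighted averages depend continuously on $\rho$, so $CZ_\rho(\bar z)$ is continuous in $\rho$ on $(0, 1/2 - R_0)$.

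\smallskip

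Next I would show that $CZ_\rho(\bar z)$ is \emph{small} for $\rho$ not too small, specifically for $\rho$ slightly below $1/2 - R_0$ (or more precisely, on the whole range $\rho \in \bigl(\tfrac{\bar\rho_2 - \bar\rho_1}{200}, 1/2 - R_0\bigr)$). The point is that for such $\rho$ and $\bar z \in Q_{\bar\rho_1}^+ \subset Q_{3/2}^+$, the cylinder $Q_\rho^+(\bar z)$ is contained in $Q_{3/2}^+$, and by the doubling property of $\mu$ on $\bR$ (combined, if needed, with the parabolic product structure) one has
\[
\mu(Q_{3/2}^+) \le C(n, M_0)\, (\bar\rho_2 - \bar\rho_1)^{-(n+3)}\, \mu\bigl(Q_\rho^+(\bar z)\bigr)
\]
uniformly in $\rho \ge \tfrac{\bar\rho_2 - \bar\rho_1}{200}$; here the exponent $n+3$ comes exactly from the weighted measure estimate for $Q_r$ used in the proof of Proposition \ref{reverse-holder-inter} (with the extra factor accounting for the ratio of radii squared from the parabolic time direction). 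Hence
\[
CZ_\rho(\bar z) \le C(n, M_0)\, (\bar\rho_2 - \bar\rho_1)^{-\frac{n+3}{2}}
\left[\left(\fint_{Q_{3/2}^+}|Du|^2\,\mu(dz)\right)^{1/2} + \frac1\delta\left(\fint_{Q_{3/2}^+}|F|^\eta\,\mu(dz)\right)^{1/\eta}\right]
= C(n,M_0)\,(\bar\rho_2 - \bar\rho_1)^{-\frac{n+3}{2}}\,\tau_0.
\]
Choosing $B_0 = B_0(n, M_0)$ to be this constant $C(n,M_0)$, the hypothesis $\tau > B_0(\bar\rho_2 - \bar\rho_1)^{-\frac{n+3}{2}}\tau_0$ forces $CZ_\rho(\bar z) < \tau$ for every $\rho \in \bigl(\tfrac{\bar\rho_2-\bar\rho_1}{200}, 1/2 - R_0\bigr)$.

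\smallskip

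On the other hand, if $\bar z \in E_{\bar\rho_1}(\tau)$ is a Lebesgue point of $Du$ with $|Du(\bar z)| > \tau$, then by the Lebesgue differentiation theorem with respect to the doubling measure $\mu(dz)$ we have $\bigl(\fint_{Q_\rho^+(\bar z)}|Du|^2\,\mu(dz)\bigr)^{1/2} \to |Du(\bar z)| > \tau$ as $\rho \to 0^+$, hence $CZ_\rho(\bar z) > \tau$ for all sufficiently small $\rho$. Combining this with the previous paragraph and the continuity of $\rho \mapsto CZ_\rho(\bar z)$, the set $\{\rho \in (0, \tfrac{\bar\rho_2-\bar\rho_1}{200}) : CZ_\rho(\bar z) = \tau\}$ is nonempty; I would take $\rho_{\bar z}$ to be its supremum. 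Then $CZ_{\rho_{\bar z}}(\bar z) = \tau$ by continuity, $\rho_{\bar z} < \tfrac{\bar\rho_2 - \bar\rho_1}{200}$, and $CZ_\rho(\bar z) < \tau$ for all $\rho \in (\rho_{\bar z}, \tfrac{\bar\rho_2 - \bar\rho_1}{200})$; on the complementary range $\rho \in [\tfrac{\bar\rho_2-\bar\rho_1}{200}, 1/2 - R_0)$ we already know $CZ_\rho(\bar z) < \tau$, so in fact $CZ_\rho(\bar z) < \tau$ for all $\rho \in (\rho_{\bar z}, 1/2 - R_0)$, which is the claim.

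\smallskip

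The only genuinely delicate point is the measure comparison in the second paragraph: one must make sure the constant depends only on $n$ and $M_0$ (not on $\bar z$ or $\rho$), which is where the $A_2$ condition $[\mu]_{A_2} \le M_0$ and the doubling constant it controls enter, and one must correctly carry the parabolic scaling so that the exponent is $\tfrac{n+3}{2}$ rather than $\tfrac{n+1}{2}$ or $\tfrac{n+2}{2}$; near the flat boundary one also uses that $\mu(Q_\rho^+(\bar z)) \ge c\,\mu(Q_\rho(\bar z))$ is not needed — instead one directly estimates $\mu(Q_\rho^+(\bar z))$ from below via the one-dimensional doubling of $\mu$ on intervals touching $\{x_n = 0\}$, exactly as in Lemma \ref{pre-R-Holder} and the remark following Proposition \ref{reverse-holder-inter}. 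Everything else is the standard stopping-time scheme.
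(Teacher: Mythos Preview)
Your proposal is correct and follows essentially the same approach as the paper: you use the $A_2$-doubling measure comparison $\mu(Q_{3/2}^+)/\mu(Q_\rho^+(\bar z)) \le C(n,M_0)\rho^{-(n+3)}$ to bound $CZ_\rho(\bar z)$ on the range $\rho \in [\tfrac{\bar\rho_2-\bar\rho_1}{200}, \tfrac12 - R_0]$, invoke the Lebesgue differentiation theorem at small scales, and then pick $\rho_{\bar z}$ as the largest radius where $CZ_\rho(\bar z) = \tau$ by continuity. The paper's proof is identical in structure; the only cosmetic difference is that the paper records explicitly that $\eta > 2$ is what allows one to replace the exponent $(n+3)/\eta$ in the $F$-term by the larger $(n+3)/2$ coming from the $|Du|^2$-term.
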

\begin{proof}
Observe that for any $\rho \in (0,1/2-R_0)$ and  $\bar{z} = (\bar{t}, \bar{x}', \bar{x}_n) \in Q_{\bar{\rho}_1}^+$, we have $Q_{\rho}^+(\bar{z}) \subset Q_{3/2}^+$. Moreover, since $\mu \in A_2(\bR)$, it follows that
\begin{equation} \label{mu-compare-R-r}
\frac{{\mu}(Q_{3/2}^+)}{{\mu}(Q_\rho^+(\bar{z}))}  \leq C(n,M_0)\rho^{-(n+3)}.
\end{equation}
From \eqref{mu-compare-R-r} and since $\eta >2$,
\[
\begin{split}
CZ_\rho(\bar{z})  & \leq C(n, M_0) \left [\rho^{-(n+3)/2} \left(\fint_{Q_{3/2}^+} |D u|^2 \,{\mu}(dz) \right)^{1/2} +   \rho^{-(n+3)/\eta}  \frac{1}{\delta}  \left(\fint_{Q_{3/2}^+} |F|^{\eta} \,{\mu}(dz) \right)^{1/\eta} \right] \\
& \leq C(n,M_0) \rho^{-(n+3)/2} \tau_0  \leq C(n,M_0) \left(\frac{200}{\bar{\rho}_2-\bar{\rho}_1} \right)^{(n+3)/2} \tau_0 = B_0 (\bar{\rho}_2-\bar{\rho}_1)^{-\frac{n+3}{2}} \tau_0 < \tau
\end{split}
\]
for any
\[
\rho\in \left[\frac{\bar{\rho}_2-\bar{\rho}_1}{200} , \frac{1}{2} - R_0\right] \quad  \text{and} \quad  \tau > B_0  (\bar{\rho}_2-\bar{\rho}_1)^{-\frac{n+3}{2}} \tau_0.
\]
On the other hand, when $\bar{z} \in E_{\bar{\rho}_1}(\tau)$, by the Lebesgue differentiation theorem, we see that if $\rho$ is sufficiently small, then
\[
CZ_\rho(\bar{z}) > \tau.
\]
Due to the fact the $CZ_\rho(\bar{z})$ is continuous in $\rho$, we can find $\rho_{\bar{z}}$, which is  the largest number in $(0, \frac{\bar{\rho}_2-\bar{\rho}_1}{200})$, such that $CZ_{\rho_{\bar{z}}}(\bar{z}) = \tau$. From this,  the conclusion of the lemma follows.
\end{proof}
\begin{lemma} \label{covering-lemma}
For each $1 \leq \bar{\rho}_1 < \bar{\rho}_2 \leq 1 + R_0$ and $\tau>B_0  (\bar{\rho}_2-\bar{\rho}_1)^{-\frac{n+3}{2}} \tau_0$, there exists a countable, disjoint family of cylinders $\{Q_{\rho_i}^+(z_i)\}_{i \in \I}$ with $\rho_i < (\bar{\rho}_2 -\bar{\rho}_1)/200$ and $z_i \in Q_{\bar{\rho}_1}^+$ such that the following holds
\begin{itemize}
\item[\textup{(i)}] $E_{\bar{\rho}_1}(\tau) \subset \cup_{i=1}^\infty Q_{5\rho_i}^+(z_i)$
\item[\textup{(ii)}] $CZ_{\rho_i}(z_i) = \tau$, and $CZ_{\rho}(z_i) < \tau$ for any $\rho \in (\rho_i, 1/2-R_0)$.
\end{itemize}
Moreover, for each $i \in \I$, the following estimate holds
\begin{align} \label{K-i-est}
&{\mu}(Q_{\rho_i}^+(z_i))  \leq C(n,\Lambda, \eta) \Bigg[ {\mu}\Big(Q_{\rho_i}^+(z_i) \cap E_{\bar{\rho}_2}(\tau/4)\Big) \nonumber\\
&\quad + \frac{1}{(\tau \delta)^\eta} \int_{\tau \delta/4}^\infty  s^\eta {\mu}\Big( \{(t,x) \in Q_{\rho_i}^+(z_i): |F(t,x)| > s\} \Big) \frac{ds}{s} \Bigg].
\end{align}
\end{lemma}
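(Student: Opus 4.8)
The plan is to prove Lemma \ref{covering-lemma} by combining the stopping-time argument of Lemma \ref{stopping-lemma} with a Vitali-type covering lemma, and then for each selected cylinder to run the solution decomposition of Proposition \ref{G-approx-propos} together with a good-$\lambda$/comparison estimate. First I would apply Lemma \ref{stopping-lemma}: for each $\bar z\in E_{\bar\rho_1}(\tau)$ with $\tau>B_0(\bar\rho_2-\bar\rho_1)^{-\frac{n+3}2}\tau_0$, we obtain a radius $\rho_{\bar z}<(\bar\rho_2-\bar\rho_1)/200$ with $CZ_{\rho_{\bar z}}(\bar z)=\tau$ and $CZ_\rho(\bar z)<\tau$ for all $\rho\in(\rho_{\bar z},1/2-R_0)$. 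The collection $\{Q^+_{\rho_{\bar z}}(\bar z):\bar z\in E_{\bar\rho_1}(\tau)\}$ covers $E_{\bar\rho_1}(\tau)$; since all radii are uniformly bounded, the Vitali covering lemma (in the doubling metric measure space $(\mathbb R^{n+1},{\mu})$, noting $\mu\in A_2$ so ${\mu}$ is doubling) extracts a countable disjoint subfamily $\{Q^+_{\rho_i}(z_i)\}_{i\in\I}$ with $E_{\bar\rho_1}(\tau)\subset\bigcup_i Q^+_{5\rho_i}(z_i)$. This gives (i) and (ii) directly.

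The main work is the per-cylinder estimate \eqref{K-i-est}. Fix $i\in\I$ and write $z_i=(t_i,x_i)$. The stopping-time property (ii) says that on the slightly larger cylinder we control the Calder\'on–Zygmund quantity: $\big(\fint_{Q^+_{30\rho_i}(z_i)}|Du|^2\,{\mu}(dz)\big)^{1/2}\le C\tau$ and $\frac1\delta\big(\fint_{Q^+_{30\rho_i}(z_i)}|F|^\eta\,{\mu}(dz)\big)^{1/\eta}\le C\tau$, provided $30\rho_i<1/2-R_0$, which holds since $\rho_i<(\bar\rho_2-\bar\rho_1)/200\le R_0/100$. Now apply Proposition \ref{G-approx-propos} on $Q^+_{3\rho_i}(z_i)$ to split $u=\tilde u+w$ with $\fint_{Q^+_{3\rho_i}(z_i)}|D\tilde u|^2\,{\mu}(dz)$ bounded by the right-hand side of \eqref{B-u-tilde-est-inter} and $\|Dw\|_{L^\infty(Q^+_{\rho_i}(z_i))}^2$ bounded by \eqref{D-L-infty-w-inter}. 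Feeding the stopping-time bounds into these, and recalling $F=|\F|+\cG(f)|f|^{l_0'}$ so that the $\F$- and $f$-terms are dominated by $\fint_{Q^+_{30\rho_i}(z_i)}|F|^2\,{\mu}(dz)\le(\fint|F|^\eta)^{2/\eta}\le(\delta\tau)^2$, we get $\|Dw\|_{L^\infty(Q^+_{\rho_i}(z_i))}\le N_1\tau$ for a constant $N_1=N_1(n,\Lambda,M_0,K_0)$, and $\fint_{Q^+_{3\rho_i}(z_i)}|D\tilde u|^2\,{\mu}(dz)\le C\delta^{2}\tau^2+C\delta^{\kappa_0}\tau^2$, which is $\le C\delta^{\min(2,\kappa_0)}\tau^2$.

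From here the argument is the standard maximal-function-free level set step. Choose $N_0=4N_1$ say. If $(t,x)\in Q^+_{\rho_i}(z_i)$ is a Lebesgue point of $Du$ with $|Du(t,x)|>N_0\tau$, then $|D\tilde u(t,x)|\ge|Du(t,x)|-|Dw(t,x)|>N_0\tau-N_1\tau\ge 3N_1\tau$; consequently $Q^+_{\rho_i}(z_i)\cap E(N_0\tau)\subset\{(t,x)\in Q^+_{\rho_i}(z_i):|D\tilde u|>3N_1\tau\}$, whose ${\mu}$-measure, by Chebyshev and the $\tilde u$-estimate, is at most $C\delta^{\min(2,\kappa_0)}{\mu}(Q^+_{\rho_i}(z_i))$ — a quantity that can be absorbed/made small. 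To obtain the form \eqref{K-i-est} one instead argues by contraposition: if ${\mu}(Q^+_{\rho_i}(z_i))$ were not controlled by ${\mu}(Q^+_{\rho_i}(z_i)\cap E_{\bar\rho_2}(\tau/4))$ plus the tail integral of $|F|$, then on a large portion of $Q^+_{\rho_i}(z_i)$ one would have both $|Du|\le\tau/4$ and $F$ small, forcing $CZ_{\rho_i}(z_i)<\tau$ after using the reverse-Hölder self-improvement and a truncation of $F$ at height $\delta\tau/4$, contradicting (ii). Quantitatively, decompose $|F|=|F|\mathbf 1_{|F|\le\delta\tau/4}+|F|\mathbf 1_{|F|>\delta\tau/4}$; the first piece contributes at most $(\delta\tau/4)\le$ (a small multiple of $\tau$), and the second is exactly the tail $\frac1{(\delta\tau)^\eta}\int_{\delta\tau/4}^\infty s^\eta{\mu}(\{|F|>s\}\cap Q^+_{\rho_i}(z_i))\,\frac{ds}{s}$ after rewriting $\fint_{Q^+_{\rho_i}(z_i)}|F|^\eta\mathbf 1_{|F|>\delta\tau/4}\,{\mu}(dz)$ via the layer-cake formula. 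Combining the measure of the "$|Du|>\tau/4$" part (Chebyshev against $\tau^2$ using $CZ_{\rho_i}(z_i)=\tau$) with the "$F$ large" part yields \eqref{K-i-est} with the claimed dependence $C(n,\Lambda,\eta)$.

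The step I expect to be the main obstacle is bookkeeping the constants so that \eqref{K-i-est} comes out with a constant depending only on $n,\Lambda,\eta$ and \emph{not} on $\delta$ or $K_0$: the $K_0$- and $\delta$-dependence must be confined to the choice of $N_0$ and the smallness of the absorbed term, while the covering and Chebyshev parts stay universal. Concretely, one must be careful that the $L^\infty$-bound $\|Dw\|\le N_1\tau$ (which does depend on $K_0$ through Proposition \ref{B-Lipschitz-est}) enters only through the definition of $N_0$, and that the reverse-Hölder exponent $\eta<2+\epsilon_0$ is fixed before $\delta$ is chosen, so the Gehring constants do not blow up. A secondary technical point is verifying that $30\rho_i$ (rather than $5\rho_i$) cylinders still sit inside $Q^+_{3/2}$, which is why the hypothesis $\tau>B_0(\bar\rho_2-\bar\rho_1)^{-(n+3)/2}\tau_0$ and $\rho_i<(\bar\rho_2-\bar\rho_1)/200$ are calibrated the way they are; this must be checked explicitly when invoking Proposition \ref{G-approx-propos}, whose hypothesis is $\rho\in(0,R_0/30)$.
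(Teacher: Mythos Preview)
Your argument for (i) and (ii) is correct and is exactly what the paper does: apply Lemma \ref{stopping-lemma} and Vitali.

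For \eqref{K-i-est}, however, you have conflated this lemma with the \emph{next} result, Proposition \ref{main-level-set}. The solution decomposition $u=\tilde u+w$ from Proposition \ref{G-approx-propos} is the engine of that proposition, not of this lemma, and invoking it here is both unnecessary and harmful: the $L^\infty$ bound on $Dw$ carries a constant depending on $K_0$ (via Proposition \ref{B-Lipschitz-est}) and on $M_0$, so you cannot get a constant $C(n,\Lambda,\eta)$ that way. You noticed this yourself in your last paragraph, but the fix is not careful bookkeeping---it is to drop the decomposition entirely. Also note that your decomposition argument bounds $\mu(\{|Du|>N_0\tau\}\cap Q_{\rho_i}^+(z_i))$, which is the wrong quantity: \eqref{K-i-est} asks for a bound on $\mu(Q_{\rho_i}^+(z_i))$ itself.

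The paper's proof of \eqref{K-i-est} is a direct two-case argument based on the identity $CZ_{\rho_i}(z_i)=\tau$. If the $F$-term dominates, i.e.\ $\delta^{-\eta}\fint_{Q_{\rho_i}^+(z_i)}|F|^\eta\,\mu(dz)\ge(\tau/2)^\eta$, write this integral via the layer-cake formula, split the range at $\delta\tau/4$, and absorb the low part into the left-hand side. If instead the $Du$-term dominates, so that $\fint_{Q_{\rho_i}^+(z_i)}|Du|^2\,\mu(dz)\ge(\tau/2)^2$, apply Chebyshev and split the integral over $Q_{\rho_i}^+(z_i)\setminus E_{\bar\rho_2}(\tau/4)$ (trivially absorbed) and over $Q_{\rho_i}^+(z_i)\cap E_{\bar\rho_2}(\tau/4)$; on the latter use H\"older to pass to $\big(\fint_{Q_{\rho_i}^+(z_i)}|Du|^\eta\big)^{2/\eta}$, then the reverse H\"older inequalities (Propositions \ref{reverse-holder-inter} and \ref{reverse-holder-bdr}) to bound this by $CZ_{2\rho_i}(z_i)^2<\tau^2$ from (ii). This yields \eqref{K-i-est} with constant depending only on $n,\Lambda,\eta$. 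Your ``contraposition'' sketch touches on these ingredients (truncation of $F$, reverse H\"older), but the logical structure there is not the right one; the direct case split is what makes the argument work.
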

\begin{proof}
The conclusions (i) and (ii) follow directly from Lemma \ref{stopping-lemma} and the Vitali covering lemma. It remains now to prove \eqref{K-i-est}.  Observe that if
\begin{equation} \label{case-1-tau-F}
\frac{1}{\delta^\eta} \fint_{Q_{\rho_i}^+(z_i)} |F(t, x)|^\eta\,{\mu}( dz) \geq \frac{\tau^\eta }{2^\eta},
\end{equation}
then
\[
\begin{split}
{\mu} (Q_{\rho_i}^+(z_i))  &\leq \frac{2^{\eta}}{\tau^\eta \delta^\eta} \int_{Q_{\rho_i}^+(z_i)} |F(t, x)|^\eta \,{\mu} (dz) \\
&= \frac{\eta2^{\eta}}{\tau^\eta \delta^\eta} \int_0^\infty s^\eta {\mu} \Big ( \Big\{ (t,x) \in Q_{\rho_i}^+(z_i): |F(t,x)| > s\Big\}\Big) \frac{ds}{s} \\
& = \frac{\eta 2^{\eta}}{\tau^\eta \delta^\eta} \left[ \int_0^{\delta \tau/4} \cdots + \int_{\delta \tau/4}^\infty \cdots \right] \\
& \leq \frac{ {\mu} ( Q_{\rho_i}^+(z_i))}{2^\eta} + \frac{\eta2^{\eta}}{\tau^\eta \delta^\eta} \int_{\delta \tau/4}^\infty s^\eta {\mu} \Big ( \Big\{ (t,x) \in Q_{\rho_i}^+(z_i): |F(t,x)| > s\Big\}\Big )\frac{ds}{s}.
\end{split}
\]
Hence, \eqref{K-i-est} follows. Otherwise, i.e., \eqref{case-1-tau-F} is false, it follows from the fact that $CZ_{\rho_i}(z_i) =\tau$ that
\[
\fint_{Q_{\rho_i}^+(z_i)} |D u|^2 \,{\mu} ( dz) \geq \frac{\tau^2}{2^2},
\]
and therefore
\[
{\mu} ( Q_{\rho_i}^+(z_i) ) \leq \frac{2^2}{\tau^2} \int_{Q_{\rho_i}^+(z_i)} |D u|^2 \,{\mu}(dz).
\]
Then, observe that since $\rho_i < (\bar{\rho}_2- \bar{\rho}_1)/200$, $Q_{\rho_i}^+(z_i) \subset Q_{\bar{\rho}_2}^+$, and hence
\[
\begin{split}
{\mu} ( Q_{\rho_i}^+(z_i)) & \leq \frac{2^2}{\tau^2} \int_{Q_{\rho_i}^+(z_i) \setminus E_{\bar{\rho}_2}(\tau/4)} |D u|^2 \,{\mu} (dz) + \frac{2^2}{\tau^2} \int_{Q_{\rho_i}^+(z_i) \cap E_{\bar{\rho}_2}(\tau/4)} |D u|^2 \,{\mu} (dz) \\
& \leq \frac{{\mu} ( Q_{\rho_i}^+(z_i))|}{4} +  \frac{2^2}{\tau^2} \int_{Q_{\rho_i}^+(z_i) \cap E_{\bar{\rho}_2}(\tau/4)} |D u|^2 \,{\mu} (dz).
\end{split}
\]
Therefore,
\[
{\mu} ( Q_{\rho_i}^+(z_i)) \leq \frac{16}{3\tau^2} \int_{Q_{\rho_i}^+(z_i) \cap E_{\bar{\rho}_2}(\tau/4)} |D u|^2 \,{\mu} (dz).
\]
This and H\"{o}lder's inequality yield that
\[
\begin{split}
{\mu} (Q_{\rho_i}^+(z_i))
& \leq \frac{6  {\mu} (Q_{\rho_i}^+(z_i))^{\frac{2}{\eta}}}{\tau^2} \left(\fint_{Q_{\rho_i}^+(z_i)} |D u|^{\eta} \,{\mu} (dz) \right)^{\frac{2}{\eta}}  {\mu} (Q_{\rho_i}^+(z_i) \cap E_{\bar{\rho}_2}(\tau/4))^{1 - \frac{2}{\eta}}.
\end{split}
\]
Hence,
\begin{equation} \label{R1}
{\mu} ( Q_{\rho_i}^+(z_i))^{1 - \frac{2}{\eta}} \leq \frac{6}{\tau^2} \left(\fint_{Q_{\rho_i}^+(z_i)} |D u|^{\eta} \,{\mu} (dz) \right)^{\frac{2}{\eta}} {\mu} (Q_{\rho_i}^+(z_i) \cap E_{\bar{\rho}_2}(\tau/4)) ^{1 - \frac{2}{\eta}}.
\end{equation}
On the other hand, as $\mu \in A_2(\bR)$ and \eqref{l-zero-condition} holds, by Lemmas \ref{reverse-holder-inter} and \ref{reverse-holder-bdr}, and \eqref{F-def-level},
\begin{align} \label{R3}
& \left(\fint_{Q_{\rho_i}^+(z_i)} |D u|^{\eta} \,{\mu} (dz) \right)^{\frac{1}{\eta}} \nonumber\\
&\leq C \left[\left(\fint_{Q_{2\rho_i}^+(z_i)} |D u|^2\, \mu(dz) \right)^{1/2} + \left(\fint_{Q_{2\rho_i}^+(z_i)} |\F|^{\eta} \,{\mu} (dz) \right)^{\frac{1}{\eta}}+  \mathcal{G}(f) \left( \fint_{Q_{2\rho_i}^+(z_i)} |f|^{l_0'\eta}  \,{\mu} (dz) \right)^{\frac{1}{\eta}} \right]\nonumber\\
&\leq C  \left[\left(\fint_{Q_{2\rho_i}^+(z_i)} |D u|^2\, \mu( dz) \right)^{1/2} +\left(\fint_{Q_{2\rho_i}^+(z_i)} |F|^{\eta} \,{\mu} (dz) \right)^{\frac{1}{\eta}}\right].
\end{align}
Collecting the estimates \eqref{R1} and \eqref{R3},
we conclude that
\begin{align*}
{\mu} (Q_{\rho_i}^+(z_i))^{1 - \frac{2}{\eta}} &\leq \frac{C(n,\Lambda)}{\tau^2} CZ_{2\rho_i}(z_i)^2 {\mu} (Q_{\rho_i}^+(z_i) \cap E_{\bar{\rho}_2}(\tau/4)) ^{1 - \frac{2}{\eta}} \\
&\leq C(n,\Lambda) {\mu} (Q_{\rho_i}^+(z_i) \cap E_{\bar{\rho}_2}(\tau/4)) ^{1 - \frac{2}{\eta}} .
\end{align*}
This implies
\[
{\mu} (Q_{\rho_i}^+(z_i)) \leq C(n,\Lambda)  {\mu} (Q_{\rho_i}^+(z_i) \cap E_{\bar{\rho}_2}(\tau/4)),
\]
and \eqref{K-i-est} follows. The proof is therefore complete.
\end{proof}
\begin{proof}[Proof of Proposition \ref{main-level-set}]
Fix $\bar{\rho}_1, \bar{\rho}_2$, and $\tau$ as in the statement of Proposition \ref{main-level-set}. Note that for each $i \in \I$, $150 \rho_i \in (\rho_i, \frac{1}{2} -R_0)$. Hence, from (ii) of Lemma \ref{covering-lemma}, it follows that $CZ_{150\rho_i}(z_i) < \tau$. Therefore,
\[
\left( \fint_{Q_{150\rho_i}^+(z_i)} |D u|^2 \,{\mu}(dz) \right)^{1/2} \leq \tau, \quad
\left(\fint_{Q_{150\rho_i}^+(z_i)} |F|^\eta \,{\mu}(dz) \right)^{1/\eta} \leq \delta \tau.
\]
Since $Q_{150\rho_i}^+(z_i) \subset Q_2^+$, it follows that there is some constant $C = C(n,M_0) >1$ such that
\begin{equation*} 
\begin{split}
& 150\rho_i \left( \fint_{Q_{150\rho_i}^+(z_i)} |f|^{2l_0'} \,{\mu}(dz) \right)^{1/(2l_0')} \\
& \leq 150 \rho_i \left( \frac{{\mu}(Q_2^+)}{{\mu}(Q_{150 \rho_i}^+(z_i))} \right)^{ \frac{1}{2l_0'} -\frac{1}{2}} \left( \fint_{Q_2^+} |f|^{2l_0'} \,{\mu}(dz) \right)^{\frac{1}{2l_0'} -\frac{1}{2} }  \left( \fint_{Q_{150\rho_i}^+(z_i)} |f|^{2l_0'} \,{\mu}(dz) \right)^{1/2} \\
& \leq C(n,M_0) \cG(f) \left( \fint_{Q_{150\rho_i}^+(z_i)} |f|^{2l_0'} \,{\mu}(dz) \right)^{1/2},
\end{split}
\end{equation*}
where we used \eqref{mu-compare-R-r} and \eqref{l-zero-condition} as well as the definition of $\mathcal{G}$. 
Thus,
\[
 \left( \fint_{Q_{150\rho_i}^+(z_i)} |\F|^{2} \,{\mu}(dz) \right)^{1/2} + 150 \rho_i \left( \fint_{Q_{150\rho_i}^+(z_i)} |f|^{2l_0'} \,{\mu}(dz) \right)^{1/(2l_0')} \leq C(n,M_0)  \delta \tau.
\]
Therefore, for each $i$, by applying Proposition \ref{G-approx-propos} with $\rho$ replaced by $ 5\rho_i \in (0, R_0/30)$, we can find a function $\hat{u}_i$ and $w_i$  defined on $Q_{15\rho_i}^+(z_i)$ such that $u = \hat{u}_i + w_i$ and
\begin{equation} \label{compare-i-l}
\begin{split}
& \fint_{Q_{15 \rho_i}^+(z_i)} |D \hat{u}_i|^2 \,{\mu}(dz)  \leq C(n,\Lambda, M_0, K_0) \left[ [[\A]]^{\kappa_0}_{\textup{BMO}(Q_{3/2}^+, \mu)} + \delta^2 \right]\tau^2 \\
&\quad\quad \leq C_0(n,\Lambda, M_0, K_0) \delta^{\kappa_0} \tau^2, \\
& \norm{D w_i}_{L^\infty(Q_{5\rho_i}^+(z_i))}   \leq C_0(n,\Lambda, M_0, K_0) \tau.
 \end{split}
\end{equation}
Now, let $N_0 = 4C_0(n,\Lambda, M_0, K_0)$. Observe that from Lemma \ref{covering-lemma},
\[
{\mu}\Big( E_{\bar{\rho}_1}(N_0 \tau)\Big) \leq \sum_{i\in \I} {\mu}\Big( \Big\{(t,x) \in Q_{5\rho_i}^+(z_i): |D u(t,x)| > N_0\tau \Big\} \Big).
\]
From this, \eqref{compare-i-l}, and the Chebyshev inequality,  it follows that
\[
\begin{split}
{\mu}\Big(E_{\bar{\rho}_1}(N_0 \tau) \Big)  & \leq \sum_{i\in \I} {\mu}\Big( \Big\{(t, x) \in Q_{5\rho_i}^+(z_i): |D \hat{u}_i(t, x)  | > \frac{N_0\tau}{2} \Big\} \Big) \\
& \quad \quad \quad +
\sum_{i\in \I} {\mu}\Big( \Big\{(t,x) \in Q_{5\rho_i}^+(z_i): |D w_i(t, x) | > \frac{N_0\tau}{2} \Big\} \Big ) \\
& \leq \sum_{i\in \I} {\mu}\Big( \Big\{(t,x) \in Q_{15\rho_i}^+(z_i): |D \hat{u}_i(t, x) | > \frac{N_0\tau}{2} \Big\} \Big )\\
&\leq \Big (\frac{2}{N_0\tau} \Big)^2\sum_{i\in \I}  \int_{Q_{15\rho_i}^+(z_i)} |D \hat{u}_i|^2  \,{\mu}(dz) \\
& \leq C_0^2 \delta^{\kappa_0}\Big (\frac{2}{N_0} \Big)^2\sum_{i\in \I} {\mu}(Q_{15\rho_i}^+(z_i)) \leq C(n,M_0) \delta^{\kappa_0} \sum_{i\in \I} {\mu}(Q_{\rho_i}^+(z_i)),
\end{split}
\]
where in the last estimate, we used the doubling property of $\mu$. From this, if we choose $\delta$ such that $\epsilon = C(n,M_0) \delta^{\kappa_0}$, we get
the conclusion of the proposition follows directly from \eqref{K-i-est} and the fact that  $\{Q_{\rho_i}^+(z_i)\}_{i \in \I}$ is a disjoint family.
\end{proof}
\subsection{Proof of Theorem \ref{bdr-reg-est}} For given constant $\Lambda >0, M_0 \geq 1$, and $K_0\geq 1$ as in the statement of Theorem \ref{bdr-reg-est}, let $\epsilon >0$ be a number to be determined that depends only on $\Lambda, M_0, K_0$, and $n$. Let $\delta = \delta(n,\epsilon, M_0)>0$ be sufficiently small constant defined in Proposition \ref{main-level-set}.  Assume that all the conditions in Theorem \ref{bdr-reg-est} holds with this $\delta$, and we will prove the estimate \eqref{L-q-bdr-est}. \\
\noindent
{\bf Case I:} We assume that $\lambda =0$.  For each $k \in \bN$, we define
$$
(D u)_k (t, x) = \max\Big\{|D u(t, x)|, k \Big\}.
$$
Note that at this moment, we do not know yet if $|D u|$ is in $L^{q}(Q_{1}^+, \mu)$. However, for each fixed $k$,  as  $(D u)_k$ is bounded, $(D u)_k \in L^{q}(Q_{2}^+, \mu)$. For $\rho \in [1,2]$, we denote
\[
E_{\rho}^k(\tau) = \Big \{ (t,x) \in Q_{\rho}^+ : (D u)_k(t, x) > \tau \Big \}.
\]
By considering the cases $k < N_0 \tau$ and $k \geq N_0 \tau$, we can conclude from the Proposition \ref{main-level-set} that
\begin{equation} \label{level-set-k}
{\mu}\Big (E_{\bar{\rho}_1}^k(N_0\tau ) \Big) \leq \epsilon \left[{\mu}\Big(E_{\bar{\rho}_2}^k(\tau/4)  \Big)+ \frac{1}{(\delta \tau)^\eta} \int_{\delta \tau/4} ^\infty s^\eta {\mu}\Big( \Big \{(t, x) \in Q_{2}^+: |F(t, x)| >s\Big \} \Big)\frac{ds}{s} \right]
\end{equation}
for any  $\tau > \hat{B}_0 \tau_0$ with $\hat{B}_0 = B_0 (\bar{\rho}_2 -\bar{\rho}_2)^{-(n+3)/2}$ and $1 \leq \bar{\rho}_1 < \bar{\rho}_2 \leq 1+R_0$.
Then, note that
\begin{equation} \label{q-finite}
\begin{split}
\norm{(D u)_k}_{L^{q}(Q_{\bar{\rho}_1}^+, {\mu})}& \leq C(N_0, q)  \left( \int_{0}^\infty  \tau^q {\mu} \Big(  \big\{(t, x) \in Q_{\bar{\rho}_1}^+: (D u)_k(t, x) > N_0 \tau \big\}  \Big)\, \frac{d\tau}{\tau} \right)^{1/q} \\
& \leq C \left[\left( \int_{0}^{\hat{B}_0 \tau_0} \cdots \right)^{1/q} + \left (\int_{\hat{B}_0 \tau_0} ^{\infty} \cdots  \right)^{1/q} \right] =  I_1 + I_2.
\end{split}
\end{equation}
Using \eqref{tau-zero-def}, the first term $I_1$ is easily controlled as follows
\begin{equation} \label{I-1.est}
\begin{split}
I_1 & \leq C {\mu}(Q_{2}^+)^{1/q}  B_0 (\bar{\rho}_2 -\bar{\rho}_2)^{-(n+3)/2}\tau_0  \\
&\leq C (\bar{\rho}_2 -\bar{\rho}_1)^{-(n+3)/2}\left[{\mu}(Q_{3/2}^+)^{\frac{1}{q} - \frac{1}{2}} \norm{D u}_{L^2 (Q_{3/2}^+, {\mu})} + \delta^{-1}{\mu}(Q_{2}^+)^{\frac{1}{q} - \frac{1}{\eta}}  \norm{F}_{L^\eta(Q_{2}^+, {\mu})}\right] \\
& \leq C(\bar{\rho}_2 -\bar{\rho}_1)^{-(n+3)/2}\left[{\mu}(Q_{3/2}^+)^{\frac{1}{q} - \frac{1}{2}} \norm{D u}_{L^{2}(Q_{3/2}^+, \mu)} + \delta^{-1} \norm{F}_{L^{q}(Q_{2}^+,\mu)} \right].
\end{split}
\end{equation}
For the term $I_2$, we use \eqref{level-set-k} to control it as
\begin{equation} \label{I-2.est}
\begin{split}
I_2 & \leq C \epsilon^{1/q} \left( \int_{\hat{B}_0 \tau_0}^{\infty}  \tau^q{\mu} \Big ( \big\{(t, x) \in Q_{\bar{\rho}_2}^+: (D u)_k(t, x) > \tau/4 \big\} \Big ) \frac{d\tau}{\tau} \right)^{1/q}  \\
& \quad \quad \quad + C\epsilon^{1/q} \delta^{-\eta/q} \left( \int_{\hat{B}_0 \tau_0} ^{\infty}  \tau^{(q-\eta)} \left\{\int_{\delta s/4}^\infty s^{\eta} {\mu} \Big( \{(t, x) \in Q_{2}^+: F(t, x) > s\}\Big) \frac{ds}{s} \right\}\frac{d\tau}{\tau}\right)^{1/q} \\
& \leq C\epsilon^{1/q} \norm{(D u)_k}_{L^{q}(Q_{\bar{\rho}_2}^+, {\mu})}  \\
& \quad \qquad + C \delta^{-1}\left( \int_{\hat{B}_0 \tau_0}^{\infty}  (\delta \tau)^{q-\eta}
\left\{\int_{\delta \tau/4}^\infty s^{\eta} {\mu} \Big(\Big(\{(t,x) \in Q_{2}^+: F(t,x) > s\Big\} \Big) \frac{ds}{s} \right\} \frac{d\tau}{\tau}\right)^{1/q} \\
& = C\left[ \epsilon^{1/q} \norm{(D u)_k}_{L^{q}(Q_{\bar{\rho}_2}, {\mu})}  + J \right],
\end{split}
\end{equation}
where
\begin{equation*} 
J =  \delta^{-1}\left( \int_{\hat{B}_0 \tau_0} ^{\infty}  (\delta \tau)^{q-\eta} \left\{\int_{\delta \tau/4}^\infty s^{\eta}{\mu}\Big(\Big\{(t,x) \in Q_{2}^+: F(t, x) > s \Big\} \Big) \frac{ds}{s} \right\}\frac{d\tau}{\tau}\right)^{1/q}.
\end{equation*}
By Fubini's theorem, $J$ can be controlled as
\[
\begin{split}
J & \leq C \delta^{-1}\left[\int_0^\infty s^{q-\eta} s^{\eta} {\mu} \Big (\big \{(t, x) \in Q_{2}^+: F(t, x) > s\big \}\Big)  \frac{ds}{s} \right]^{1/q} \\
& = C \delta^{-1}\left[\int_0^\infty s^q {\mu}\Big (\big \{(t, x) \in Q_{2}^+: F(t, x) > s\big \}\Big)  \frac{ds}{s} \right]^{1/q} \\
& = C\delta^{-1}  \norm{F}_{L^{q}(Q_{2}^+, {\mu})}.
\end{split}
\]
This estimate, \eqref{q-finite}, \eqref{I-1.est}, and \eqref{I-2.est} imply that
\begin{align*}
&\norm{(D u)_k}_{L^{q}(Q_{\bar{\rho}_1}^+, {\mu})} \\
&\leq C_2\left[\epsilon^{1/q} \norm{(D u)_k}_{L^{q}(Q_{\bar{\rho}_2})} + (\bar{\rho}_2 -\bar{\rho}_1)^{-(n+3)/2}\left({\mu}(Q_{3/2}^+)^{\frac{1}{q} - \frac{1}{2}}\norm{D u}_{L^{2}(Q_{3/2}^+, {\mu})} + \delta^{-1}\norm{F}_{L^{q}(Q_{2}^+, {\mu})} \right)  \right],
\end{align*}
for some constant $C_2$ depending only on $\Lambda$, $M_0$, $K_0$, $n$, and $q$. From this, and by taking $\epsilon$ sufficiently small such that $C_2\epsilon^{1/q} \leq 1/2$, we obtain
\begin{align*}
&\norm{(D u)_k}_{L^{q}(Q_{\bar{\rho}_1}^+, {\mu})} \\
&\leq \frac{1}{2} \norm{(D u)_k}_{L^{q}(Q_{\bar{\rho}_2})} + C (\bar{\rho}_2 -\bar{\rho}_1)^{-(n+3)/2}\left({\mu}(Q_{3/2}^+)^{\frac{1}{q} - \frac{1}{2}}\norm{D u}_{L^{2}(Q_{3/2}^+, {\mu})} + \delta^{-1}\norm{F}_{L^{q}(Q_{2}^+, {\mu})} \right) ,
\end{align*}
for any $1 \leq \bar{\rho}_1 < \bar{\rho}_2 \leq 1 + R_0$.  From this last estimate,  the standard Caccioppoli's estimate and the doubling property of $\mu \in A_2(\bR)$, we have
\begin{align*}
&\norm{(D u)_k}_{L^{q}(Q_{\bar{\rho}_1}^+, {\mu})} \\
&\leq \frac{1}{2} \norm{(D u)_k}_{L^{q}(Q_{\bar{\rho}_2})} + C (\bar{\rho}_2 -\bar{\rho}_1)^{-(n+3)/2}\left({\mu}(Q_{2}^+)^{\frac{1}{q} - \frac{1}{2}}\norm{u}_{L^{2}(Q_{2}^+, {\mu})} + \delta^{-1}\norm{F}_{L^{q}(Q_{2}^+, {\mu})} \right) .
\end{align*}
Then, by a standard iteration lemma (see \cite[Lemma 4.3]{FHL}), we obtain
\[
\norm{(D u)_k}_{L^{q}(Q_{1}^+, {\mu})} \leq C \left({\mu}(Q_{2}^+)^{\frac{1}{q} - \frac{1}{2}}\norm{u}_{L^{2}(Q_{2}^+, {\mu})} + \norm{F}_{L^{q}(Q_{2}^+, {\mu})} \right) .
\]
Finally, by sending $k \rightarrow \infty$, we infer that
\[
\norm{D u}_{L^{q}(Q_{1}^+, {\mu})}  \leq C \left({\mu}(Q_{2}^+)^{\frac{1}{q} - \frac{1}{2}} \norm{u}_{L^{2}(Q_{2}^+, {\mu})} + \norm{F}_{L^{q}(Q_{2}^+, {\mu})} \right).
\]
This estimate and \eqref{G-f} imply 
the desired estimate \eqref{L-q-bdr-est} when $\lambda =0$. \\
\noindent
{\bf Case II:} Now we consider the case $\lambda >0$. We use an idea which was originally due to S. Agmon. For $(t,x)\in \bR^{d+1}_+$ and $y\in \bR$, define
$$
\tilde u(t,x,y)=u(t,x)\sin(\sqrt\lambda y+\pi/4),\quad \tilde f=f_1\sin(\sqrt\lambda y+\pi/4),
$$
$$
\F_i(t,x,y)=\F_i(t,x)\sin(\sqrt\lambda y+\pi/4),\quad \F_{n+1}=f_2(t,x)\cos(\sqrt\lambda y+\pi/4),
$$
and
$$
\tilde \bA_{ij}(t,x,y)=\bA_{ij}(t,x),\quad i,j=1,\ldots,n,
$$
$$
\tilde \bA_{n+1,j}(t,x,y)=\tilde \bA_{j,n+1}(t,x,y)=0,\quad i=1,\ldots,n,\quad \bA_{n+1,n+1}(t,x,y)=1.
$$
It is easily seen that $\tilde\bA$ satisfies the same ellipticity condition \eqref{ellipticity} and $\tilde\bA^\#_r(z,y)\le c_n\delta$ for any $r\in (0,R_0)$ and $(z,y)\in \tilde Q_{3/2}^+$, where $c_n>0$ is a constant depending only on $n$. Moreover, $\tilde u$ is a weak solution of
\begin{equation} \label{eq4.02}
\left\{
\begin{array}{cccl}
\mu(x_n) a_0(x_n) \tilde u_t - \textup{div}[\mu(x_n) (\tilde \bA (t, x,y) \nabla \tilde u - \tilde\F(t,x,y))]& = & \mu(x_n)\tilde f(t, x,y)  \\
\displaystyle{\lim_{x_n \rightarrow 0^+}\wei{\mu(x_n)(\tilde \A(t, x,y) \nabla \tilde u - \tilde\F(t, x,y)), \mathbf{e}_n}} & = &0
\end{array} \right.\quad  \text{in}\,\,   \tilde Q_2^+
\end{equation}
where $\tilde Q_\rho^+:=Q_\rho^+\times (-\rho,\rho)$. By applying {\bf Case I} to  \eqref{eq4.02}, we obtain
\begin{equation} \label{eq4.13}
\begin{split}
\fint_{\tilde Q_1^+} |D \tilde u|^q\, {\mu}(dz)\, dy &  \leq C_1\left( \fint_{\tilde Q_2^+} |\tilde u|^2 \,{\mu}(dz)\, dy \right)^{q/2} \\
& +C_2\left[ \fint_{\tilde Q_2^+} (|\tilde\F|)^q \,{\mu}(dz)\,dy + \left( \fint_{\tilde Q_2^+} |\tilde f|^{ql_0'} \,{\mu}(dz) \,dy\right)^{1/l_0'} \right].
\end{split}
\end{equation}
Note that
$$
D\tilde u=\big(D_x u\sin(\sqrt\lambda y+\pi/4),\sqrt \lambda u\cos(\sqrt\lambda y+\pi/4)\big),
$$
and for any $\lambda\ge 0$,
$$
0<c_{d,q}\le \int_{-1}^{1}|\cos(\sqrt \lambda y+\pi/4)|^q\,dy,
\int_{-1}^{1}|\sin(\sqrt \lambda y+\pi/4)|^q\,dy\le C_{d,q}<\infty.
$$
Therefore, from \eqref{eq4.13} and the definitions of $\tilde f$ and $\tilde \bF$, we easily conclude \eqref{L-q-bdr-est} for general $\lambda\ge 0$.

\section{Proofs of Theorems \ref{thm1.entire} and \ref{thm1.11}} \label{last-section}

We only give the proof of Theorem \ref{thm1.11} since the proof of Theorem \ref{thm1.entire} is similar. From Theorems \ref{interior-theorem} and \ref{bdr-reg-est}, we obtain by using H\"older's inequality that
\[
\begin{split}
&\fint_{Q_1(z_0)} (|D u|+\sqrt \lambda|u|)^q \,{\mu}(dz)   \leq C_1  \fint_{Q_2(z_0)} |u|^q \,{\mu}(dz) \\
& \quad \quad + C_2\left[ \fint_{Q_2(z_0)} (|\F|+|f_2|)^q \,{\mu}(dz) + \left( \fint_{Q_2(z_0)} |f_1|^{ql_0'}\,{\mu}(dz)\right)^{1/l_0'} \right]
\end{split}
\]
for any $z_0=(0,0,x_{n0}) \in \bR^{n+1}_+$ with $x_{n0}\ge 3$, and
\begin{equation*}
\begin{split}
\fint_{Q_1^+} (|D u|+\sqrt \lambda |u|)^q \,{\mu}(dz) &  \leq C_1 \fint_{Q_2^+} |u|^q \,{\mu}(dz) \\
& +C_2\left[ \fint_{Q_2^+} (|\F|+|f_2|)^q \,{\mu}(dz) + \left( \fint_{Q_2^+} |f_1|^{ql_0'} \,{\mu}(dz) \right)^{1/l_0'} \right].
\end{split}
\end{equation*}
Now it follows from a scaling, translation, and a partition of the unity that
\begin{align*}
&\int_{(-\infty,T)\times\bR^n_+} (|D u|+\sqrt \lambda |u|)^q \,{\mu}(dz)  \\
&\leq C\left[ \int_{(-\infty,T)\times\bR^n_+} (|\F|+|f_2|+|u|)^q \,{\mu}(dz) + \left( \int_{(-\infty,T)\times\bR^n_+} |f_1|^{ql_0'} \,{\mu}(dz) \right)^{1/l_0'} \right].
\end{align*}
By taking $\lambda_0$ sufficiently large to absorb the $u$ term on the right-hand side, we obtain \eqref{eq5.25}.
This estimate also gives the uniqueness of weak solutions.

Next, we prove the solvability. In the case when $q=2$, we do not require any regularity condition on $\mathbf{A}$. We use an approximation argument. Let $R>0$ and consider the equation \eqref{eqn-entire} in $(-\infty,T)\times B_R^+$ with 
the conormal boundary condition \eqref{main-eqn} on the flat boundary. 
It follows from the weighted $L^2$-estimates as in Lemma \ref{inter-energy} and the Galerkin's approximation method that there is a weak solution $u_R$ with the estimate uniform with respect to $R$. To get a solution, it remains to let $R\to \infty$ and take a weak limit of $u_R$.

In the case when $q\in (2,\infty)$, for $k=1,2,\ldots$, let $\bF^{(k)}=\bF \chi_{( -k^2, \min\{T, k^2\})\times B_k^+}$. Then $\bF^{(k)}\in L^2((-\infty,T)\times\bR^n_+,\mu)\cap L^q((-\infty,T)\times\bR^n_+,\mu)$, and $\bF^{(k)}\to \bF$ in $L^q((-\infty,T)\times\bR^n_+,\mu)$ as $k\to \infty$. Similarly, we define $\{f_1^{(k)}\}\subset L^{2l_0'}((-\infty,T)\times\bR^n_+,\mu)\cap L^{ql_0'}((-\infty,T)\times\bR^n_+,\mu)$ and $\{f_2^{(k)}\}\subset L^2((-\infty,T)\times\bR^n_+,\mu)\cap L^q((-\infty,T)\times\bR^n_+,\mu)$. Let $u^{(k)}$ be the weak solution of the equation with $\bF^{(k)}$, $ f_1^{(k)}$, and $f_2^{(k)}$ in place of $\bF$, $f_1$, and $f_2$, respectively. By the estimate, we have $u^{(k)}\in W^{1,q}((-\infty,T)\times\bR^n_+,\mu)$. Moreover, by the strong convergence of $\{\bF^{(k)}\}$ and $\{f_2^{(k)}\}$ in $L^q((-\infty,T)\times\bR^n_+,\mu)$, and $\{f_1^{(k)}\}$ in $L^{ql_0'}((-\infty,T)\times\bR^n_+,\mu)$, we infer that
$\{u^{(k)}\}$ is a Cauchy sequence in $W^{1,q}((-\infty,T)\times\bR^n_+,\mu)$. Let $u\in W^{1,q}((-\infty,T)\times\bR^n_+,\mu)$ be its limit. Then it is easily seen that $u$ is a solution to the equation, and satisfies \eqref{eq5.25}. The uniqueness also follows from \eqref{eq5.25}.

Finally, we consider the case when $q\in (1,2)$ by using a duality argument. We first prove the a priori estimate \eqref{eq5.25}. Let $p=q/(q-1)\in (2,\infty)$. For any $\G,g_2\in L^q((-\infty,T)\times\bR^n_+,\mu)$ and $\lambda\ge\lambda_0$, by the proof above, there is a unique solution $v\in W^{1,p}( \bR \times \bR^n_+,\mu)$ to the adjoint problem
\begin{equation*}
\left\{
\begin{array}{cccl}
-\mu(x_n) a_0(x_n) v_t - \textup{div}[\mu(x_n) (\mathbf{A}^{\text{T}}  \nabla v - \G  \chi_{(-\infty,T)})] +\lambda\mu(x_n) v & = & \sqrt\lambda\mu(x_n)g_2\chi_{(-\infty,T)}  \\
\displaystyle{\lim_{x_n \rightarrow 0^+}\wei{\mu(x_n)(\A^{\text{T}} \nabla v - \G  \chi_{(-\infty,T)}), \mathbf{e}_n }} & = &0
\end{array} \right.
\end{equation*}
in $\bR \times \bR^n_+$,
and it satisfies
\begin{equation}  \label{eq5.25b}
\int_{ \bR \times\bR^n_+} (|D v|+\sqrt \lambda |v|)^p  \,{\mu}(dz)  \leq C \int_{(-\infty,T)\times\bR^n_+} (|\G|+|g_2|)^p \,{\mu}(dz).
\end{equation}
 Moreover, it is easily sen that $v=0$ for $t\ge T$. By the equations of $u$ and $v$, we easily get
$$
\int_{(-\infty,T)\times\bR^n_+} (\G\cdot \nabla u+\sqrt\lambda g_2 u)\,{\mu}(dz)
=\int_{(-\infty,T)\times\bR^n_+} (\F\cdot \nabla v+\sqrt\lambda f_2 v)\,{\mu}(dz).
$$
Thus, by H\"older's inequality and \eqref{eq5.25b}, we obtain
\begin{align*}
&\Big|\int_{(-\infty,T)\times\bR^n_+} (\G\cdot \nabla u+\sqrt\lambda g_2 u)\,{\mu}(dz)\Big|\\
&\le \|\F\|_{L^q((-\infty,T)\times\bR^n_+,\mu)}\|Dv\|_{L^p((-\infty,T)\times\bR^n_+,\mu)}
+\sqrt\lambda \|f_2\|_{L^q((-\infty,T)\times\bR^n_+,\mu)}\|v\|_{L^p((-\infty,T)\times\bR^n_+,\mu)}\\
&\le C\big( \|\F\|_{L^q((-\infty,T)\times\bR^n_+,\mu)}+
\|f_2\|_{L^q((-\infty,T)\times\bR^n_+,\mu)}\big)
\big( \|\G\|_{L^p((-\infty,T)\times\bR^n_+,\mu)}+
\|g_2\|_{L^p((-\infty,T)\times\bR^n_+,\mu)}\big).
\end{align*}
Since $\G$ and $g_2$ are arbitrary, \eqref{eq5.25} follows.
The proof of the solvability is more involved in this case. For $i=1,2,\ldots,d$ and $k=1,2,\ldots$, let
$$
\bF^{(k)}_i=\max(-k,\min(k,\bF_i))\chi_{(-k^2, \min\{T, k^2\})\times B_k^+}.
$$
Then $\bF^{(k)}\in L^2((-\infty,T)\times\bR^n_+,\mu)\cap L^q((-\infty,T)\times\bR^n_+,\mu)$, and $\bF^{(k)}\to \bF$ in $L^q((-\infty,T)\times\bR^n_+,\mu)$ as $k\to \infty$. Similarly, we define $\{f_2^{(k)}\}\subset L^2((-\infty,T)\times\bR^n_+,\mu)\cap L^q((-\infty,T)\times\bR^n_+,\mu)$. Let $u^{(k)}\in W^{1,2}((-\infty,T)\times\bR^n_+,\mu)$ be the weak solution of the equation with $\bF^{(k)}$ and $f_2^{(k)}$ in place of $\bF$ and $f_2$, respectively. We claim that $u^{(k)}\in W^{1,q}((-\infty,T)\times\bR^n_+,\mu)$. Assuming this claim, by the strong convergence of $\{\bF^{(k)}\}$ and $\{f_2^{(k)}\}$ in $L^q((-\infty,T)\times\bR^n_+,\mu)$, we infer that
$\{u^{(k)}\}$ is a Cauchy sequence in $W^{1,q}((-\infty,T)\times\bR^n_+,\mu)$. Let $u\in W^{1,q}((-\infty,T)\times\bR^n_+,\mu)$ be its limit. Then it is easily seen that $u$ is a solution to the equation, and satisfies \eqref{eq5.25}.

It remains to prove the claim. Denote $\hat Q_r=( -r^2,\min(T,r^2))\times B_r^+$. Because ${\mu}$ is a doubling measure, we have for any $r>0$,
\begin{equation}
                            \label{eq7.36}
{\mu}(\hat Q_{2r})\le N_0{\mu}(\hat Q_{r}),
\end{equation}
where $N_0$ is independent of $r$. Since $u^{(k)}\in W^{1,2}((-\infty,T)\times\bR^n_+,\mu)$, by H\"older's inequality,
\begin{equation}
                                \label{eq7.38}
\|u^{(k)}\|_{L^q(\hat Q_{2r},\mu)}+\|Du^{(k)}\|_{L^q(\hat Q_{2r} ,\mu)}<\infty.
\end{equation}
For $j\ge 0$, we take a sequence of smooth functions $\eta_j$ such that
$$
\eta_j\equiv 0\quad \text{in}\,\,(-2^{2j}k^2,2^{2j}k^2)\times B_{2^jk},
$$
$$
\eta_j\equiv 1 \quad\text{outside}\,\,(-2^{2(j+1)}k^2,2^{2(j+1)}k^2)\times B_{2^{j+1}k},
$$
and
$$
|D \eta_j|\le C 2^{-j}, \quad |(\eta_j)_t|\le C 2^{-2j},
$$
where $C$ also depends on $k$.
By a simple calculation, we infer that $u^{(k)}\eta_j\in W^{1,2}((-\infty,T)\times\bR^n_+,\mu)$ satisfies
\begin{equation*} 
\left\{
\begin{array}{cccl}
\mu(x_n) a_0(x_n) (u^{(k)}\eta_j)_t - \textup{div}[\mu(x_n) (\mathbf{A}\nabla (u^{(k)}\eta_j) - \F^{(k,j)})] +\lambda\mu(x_n) u^{(k)}\eta_j & = & \mu(x_n)f^{(k,j)} \\
\displaystyle{\lim_{x_n \rightarrow 0^+}\wei{\mu(x_n)(\A \nabla (u^{(k)}\eta_j) - \F^{(k,j)}), \mathbf{e}_n }} & = &0
\end{array} \right.
\end{equation*}
in $(-\infty,T) \times \bR^n_+$,
where
$$
\F^{(k,j)}=u^{(k)}\mathbf{A}\nabla \eta_j,
\quad f^{(k,j)}=u^{(k)}(\eta_j)_t-(\nabla \eta_j,\bA\nabla u^{(k)}).
$$
Applying the estimate \eqref{eq5.25} with $q=2$ to the above equation of $u^{(k)}\eta_j$, we have
\begin{align*}
&\|D(u^{(k)}\eta_j)\|_{L^2((-\infty,T)\times \bR^n_+,\mu)}+\lambda^{1/2} \|u^{(k)}\eta_j\|_{L^2((-\infty,T)\times \bR^n_+,\mu)}\\
&\le C\|\F^{(k,j)}\|_{L^2((-\infty,T)\times \bR^n_+,\mu)}+C\lambda^{-1/2}
\|f^{(k,j)}\|_{L^2((-\infty,T)\times \bR^n_+,\mu)},
\end{align*}
which implies that
\begin{align*}
&\|Du^{(k)}\|_{L^2(\hat Q_{2^{j+2}k}\setminus \hat Q_{2^{j+1}k},\mu)}+\lambda^{1/2} \|u^{(k)}\|_{L^2(\hat Q_{2^{j+2}k}\setminus \hat Q_{2^{j+1}k},\mu)}\\
&\le
C2^{-j}\|u^{(k)}\|_{L^2(\hat Q_{2^{j+1}k}\setminus \hat Q_{2^{j}k,\mu})}+C\lambda^{-1/2}2^{-2j}
\|u^{(k)}\|_{L^2(\hat Q_{2^{j+1}k}\setminus \hat Q_{2^{j}k},\mu)}\\
&\quad +C\lambda^{-1/2}2^{-j}\|Du^{(k)}\|_{L^2(\hat Q_{2^{j+1}k}\setminus \hat Q_{2^{j}k},\mu)}\\
&\le C2^{-j}\big(\|Du^{(k)}\|_{L^2(\hat Q_{2^{j+1}k}\setminus \hat Q_{2^{j}k},\mu)}+\lambda^{1/2} \|u^{(k)}\|_{L^2(\hat Q_{2^{j+1}k}\setminus \hat Q_{2^{j}k},\mu)}\big),
\end{align*}
where $C$ also depends on $\lambda$. By iteration, we get that for each $j\ge 1$,
\begin{align}
                        \label{eq9.01}
&\|Du^{(k)}\|_{L^2(\hat Q_{2^{j+1}k}\setminus \hat Q_{2^{j}k},\mu)}+\lambda^{1/2} \|u^{(k)}\|_{L^2(\hat Q_{2^{j+1}k}\setminus \hat Q_{2^{j}k},\mu)}\nonumber\\
&\le C^j2^{-j(j-1)/2}\big(\|Du^{(k)}\|_{L^2(\hat Q_{2k},\mu)}+\lambda^{1/2} \|u^{(k)}\|_{L^2(\hat Q_{2k},\mu)}\big).
\end{align}
Finally, by H\"older's inequality, \eqref{eq7.36}, and \eqref{eq9.01}, we have
\begin{align*}
&\|Du^{(k)}\|_{L^q(\hat Q_{2^{j+1}k}\setminus \hat Q_{2^{j}k},\mu)}+\lambda^{1/2} \|u^{(k)}\|_{L^q(\hat Q_{2^{j+1}k}\setminus \hat Q_{2^{j}k},\mu)}\\
&\le ({\mu}(\hat Q_{2^{j+1}k}))^{1/q-1/2}\big(\|Du^{(k)}\|_{L^2(\hat Q_{2^{j+1}k}\setminus \hat Q_{2^{j}k},\mu)}+\lambda^{1/2} \|u^{(k)}\|_{L^2(\hat Q_{2^{j+1}k}\setminus \hat Q_{2^{j}k},\mu)}\big)\\
&\le N_0^{j(1/q-1/2)}C^j2^{-j(j-1)/2}({\mu}(\hat Q_{2k}))^{1/q-1/2}\big(\|Du^{(k)}\|_{L^2(\hat Q_{2k},\mu)}+\lambda^{1/2} \|u^{(k)}\|_{L^2(\hat Q_{2k},\mu)}\big),
\end{align*}
which together with \eqref{eq7.38} implies the claim. The theorem is proved.




\end{document}